    \def\l@subsection{\@tocline{2}{0pt}{2.5pc}{5pc}{}}
    \newcommand{\M}{\mathcal{M}}
    \newcommand{\A}{\mathcal{A}}
    \newcommand{\B}{\mathcal{B}}
    \newcommand{\ZZ}{\mathbb{Z}}
    \newcommand{\KK}{\mathbb{K}}
    \newcommand{\RR}{\mathbb{R}}
    \newcommand{\PP}{\mathbb{P}}
    \newcommand{\QQ}{\mathbb{Q}}
    \newcommand{\GG}{\mathbb{G}}
    \newcommand{\NN}{\mathbb{N}}
    \newcommand{\ff}{\mathbf{f}}
    \newcommand{\bp}{\mathbf{p}}
    \newcommand{\Hilb}{\mathrm{Hilb}}
    \newcommand{\Chow}{\mathrm{Chow}}
      \DeclareMathOperator{\Ch}{Ch}
    \DeclareMathOperator{\mult}{mult}
    \DeclareMathOperator{\Res}{Res}
    \DeclareMathOperator{\Red}{Red}
    \DeclareMathOperator{\Char}{char}
    \DeclareMathOperator{\spec}{Spec}
    \DeclareMathOperator{\Proj}{Proj}
    \DeclareMathOperator{\conv}{conv}
    \DeclareMathOperator{\rank}{rank}
    \DeclareMathOperator{\Gr}{Gr}
    \DeclareMathOperator{\Hom}{Hom}
    \DeclareMathOperator{\Stab}{Stab}
    \newcommand{\Aff}{\mathbb{A}}
    \newtheorem{thm*}[myintro]{Theorem}
    \newtheorem{thm}[equation]{Theorem}
    \newtheorem{prop}[equation]{Proposition}
    \newtheorem{lemma}[equation]{Lemma}
    \newtheorem{cor}[equation]{Corollary}
    \theoremstyle{definition}
    \newtheorem{defn}[equation]{Definition}
    \newtheorem{rem}[equation]{Remark}
    \newtheorem{question}[equation]{Question}
    \newenvironment{ex}
      {\pushQED{\qed}
      \oldex}
      {\popQED\endoldex}
    \newenvironment{introex}
      {\pushQED{\qed}
      \oldexB}
      {\popQED\endoldexB}
    \newcommand{\hide}[1]{}
    \numberwithin{equation}{subsection}
    \numberwithin{myintro}{section}
    \title{Rational Curves in Projective Toric Varieties}
    \author{Nathan Ilten}
    \address{Department of Mathematics, Simon Fraser University,
    8888 University Drive, Burnaby BC V5A1S6, Canada}
    \email{\href{mailto:nilten@sfu.ca}{nilten@sfu.ca}}
    \author{Jake Levinson}
\address{Département de mathématiques et de statistique, Université de Montréal, Montréal, QC
Canada H3C 3J7}
    \email{\href{mailto:jake.levinson@umontreal.ca}{jake.levinson@umontreal.ca}}
\begin{document}
    
    \begin{abstract}
	    We study embedded rational curves in projective toric varieties. Generalizing results of the first author and Zotine for the case of lines, we show that any degree $d$ rational curve in a toric variety $X$ can be constructed from a special affine-linear map called a degree $d$ Cayley structure. We characterize when the curves coming from a degree $d$ Cayley structure are smooth and have degree $d$.  We use this to establish a bijection between the set of irreducible components of the Hilbert scheme whose general element is a smooth degree $d$ curve, and so-called maximal smooth Cayley structures. 
Furthermore, we describe the normalization of the torus orbit closure of such rational curves in the Chow variety, and give partial results for the orbit closures in the Hilbert scheme.
    \end{abstract}
    \maketitle

    \tableofcontents
    \section{Introduction}
    An insightful approach to better understanding the geometry of a projective variety $X\subseteq \PP^n$ involves studying all subvarieties contained in $X$. Of particular interest are rational curves. In this paper, we will contribute to the study of rational curves in projective toric varieties. 
 
    Rational curves are often studied by considering (log stable) maps $\PP^1 \to X$, see \cite{stablemaps} and \cite{banarjee} for results in the toric setting. However, our focus will not be on the map, but on the curve itself, that is, on the image of the map. In particular, the underlying moduli spaces we are primarily interested in are the Hilbert schemes $\Hilb_{dm+1}(X)$ (parametrizing subschemes of $X$ with Hilbert polynomial $P(m)=dm+1$ equal to that of a smooth rational curve of degree $d$) and $\Chow_d(X)$ (parametrizing one-cycles of $X$ of degree $d$).  
This paper builds on previous work of the first author and Zotine \cite{fano}, in which they study Fano schemes of toric varieties.
The special case of the Fano scheme of lines is that of rational curves of degree $d=1$, in which case the Chow and Hilbert schemes coincide. 

    We now summarize our main results. For this purpose, we first introduce a bit of notation. Throughout, we will work over an algebraically closed field $\KK$ of characteristic zero. Let $\A$ be a finite subset of a lattice $M\cong \ZZ^n$. Associated to $\A$ is a (not necessarily normal) projective toric variety $X_\A\subseteq \PP^{\#\A-1}$, see \S\ref{sec:toric}.
    The key combinatorial gadgets in our study of $\Hilb_{dm+1}(X_\A)$ are \emph{degree $d$ Cayley structures of length $\ell$} on faces of the set $\A$, see \S \ref{sec:cayley} for a definition. Roughly speaking, a degree $d$ Cayley structure of length $\ell$ is an affine linear map mapping a subset $\tau$ of $\A$ to the $d$th dilate of a standard simplex of dimension $\ell$. Any two Cayley structures are equivalent if they differ by a permutation of the vertices of the simplex.

    A degree $d$ Cayley structure $\pi$ defines a family over $M_{0,\ell+1}\times T_\tau$ of non-constant basepoint-free maps from $\PP^1$ to $X_\A$ (see \S\ref{sec:curveconst}). Here, $T_\tau$ is a quotient of the dense torus of $X_\A$, and $M_{0,\ell+1}$ the moduli space of $\ell+1$-marked points on $\PP^1$. (In the special case $\ell=1$, the family is over $T_\tau/\KK^*$).
    We identify a combinatorial criterion on the Cayley structure $\pi$ that characterizes when the image of a generic map in this family has degree $d$ (corresponding to $\pi$ being \emph{primitive}, see Definition \ref{defn:primitive} and Theorem \ref{thm:primitive}). Likewise, we identify a combinatorial criterion that characterizes when the image of a generic map in the family is smooth (corresponding to $\pi$ being \emph{smooth}, see Definition \ref{defn:smooth} and Theorem \ref{thm:smooth}).
    For a smooth primitive degree $d$ Cayley structure $\pi$, we thus have a rational map 
    \[
   M_{0,\ell+1}\times T_\tau \dashrightarrow \Hilb_{dm+1}.\]
    In fact, this map is generically finite. We denote the closure of its image by $Z_\pi$.

    There is a natural combinatorially-defined partial order $\leq$ on the set of smooth primitive Cayley structures of degree $d$ defined on faces of $\A$ (see Definition \ref{defn:order}). Using this, we obtain: 
    \begin{thm*}[See Corollary \ref{cor:main}]
	    The map $\pi \mapsto Z_\pi$ induces a bijection between equivalence classes of maximal smooth primitive degree $d$ Cayley structures and irreducible components of $\Hilb_{dm+1}(X_\A)$ whose general element is a smooth rational curve.
    \end{thm*}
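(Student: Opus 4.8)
The plan is to deduce the bijection from three properties of the subvarieties $Z_\pi$ attached to smooth primitive degree $d$ Cayley structures: that each is irreducible with smooth degree $d$ rational general member, that collectively they exhaust the relevant locus of $\Hilb_{dm+1}(X_\A)$, and that the combinatorial order $\leq$ encodes inclusions among them.

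First I would record that $Z_\pi$ is irreducible and that its general element is a smooth rational curve of degree $d$. Irreducibility is immediate, as $Z_\pi$ is by definition the closure of the image of the irreducible variety $M_{0,\ell+1}\times T_\tau$ under a rational map. That a general member is smooth of degree $d$ is precisely the combined content of Theorems \ref{thm:primitive} and \ref{thm:smooth}: primitivity of $\pi$ forces the generic image to have degree $d$, and smoothness of $\pi$ forces it to be smooth.

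Next, let $U\subseteq\Hilb_{dm+1}(X_\A)$ be the locally closed locus parametrizing smooth rational curves of degree $d$, so that a component of $\Hilb$ with smooth rational general element is exactly the closure of an irreducible component of $U$. The essential input is the construction of \S\ref{sec:curveconst} together with the characterizations above: every smooth degree $d$ rational curve in $X_\A$ is the image of a map coming from some degree $d$ Cayley structure, and because the curve is smooth of degree $d$, that structure must be smooth and primitive. Hence every point of $U$ lies in some $Z_\pi$, so $\overline U=\bigcup_\pi Z_\pi$, a finite union over equivalence classes of smooth primitive $\pi$. The irreducible components of $\overline U$ are therefore exactly the maximal members of the finite family $\{Z_\pi\}$ under inclusion.

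It remains to match inclusion of the $Z_\pi$ with the order $\leq$ on Cayley structures. I would prove that $\pi\leq\pi'$ implies $Z_\pi\subseteq Z_{\pi'}$, presumably by using the combinatorial relation to realize the family associated to $\pi$ as a degeneration of, or subfamily inside, the family associated to $\pi'$, and conversely that $Z_\pi\subseteq Z_{\pi'}$ forces $\pi\leq\pi'$. The hard part will be this converse, which is essentially the injectivity of $\pi\mapsto Z_\pi$ on equivalence classes: it requires recovering the Cayley structure, up to the permutation equivalence, from the combinatorics of a general curve in $Z_\pi$ — where the generic finiteness of the parametrizing map should guarantee that no information is lost. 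Granting the resulting order-isomorphism between $(\{Z_\pi\},\subseteq)$ and equivalence classes of smooth primitive $\pi$ under $\leq$, the maximal $Z_\pi$ are exactly those with $\pi$ maximal, and distinct maximal equivalence classes give distinct components. Thus $\pi\mapsto Z_\pi$ descends to the asserted bijection.
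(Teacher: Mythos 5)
Your proposal is correct and follows essentially the same route as the paper: exhaust the smooth-rational-curve locus by the $Z_\pi$ via Proposition \ref{prop:curvetocayley} together with Theorems \ref{thm:primitive} and \ref{thm:smooth}, then identify maximal members of $\{Z_\pi\}$ under inclusion with maximal Cayley structures under $\leq$ via the order-compatibility statement (Theorem \ref{thm:smoothcomp} in the paper), whose two directions are proved exactly as you sketch (a degeneration argument for $\pi'\leq\pi\Rightarrow Z_{\pi'}\subseteq Z_\pi$, and recovering the Cayley structure from factorizations of the defining forms for the converse).
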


    The most interesting behaviour of many moduli spaces is found along the boundary. Motivated by this, we study the limiting behaviour of a general element $\eta$ of $Z_\pi$ under a one-parameter subgoup of $T_\tau$. In Theorem \ref{thm:limit}, we give a combinatorial description of this limit as a one-cycle. Using this we are able to describe the normalization of the closure of the $T_\tau$-orbit of $\eta$ in the Chow variety $\Chow_d(X_\A)$. Indeed, using the combinatorics of $\pi$, we construct a fan $\Sigma_\pi$ (see Definition \ref{defn:sigmapi}) and prove the following:
    \begin{thm*}[See Theorem \ref{thm:fan}]
The normalization of the $T_\tau$-orbit closure of a general curve corresponding to $\pi$ is the toric variety corresponding to the fan $\Sigma_\pi$.
    \end{thm*}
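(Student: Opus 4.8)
The plan is to exploit the general principle that the normalization of a torus orbit closure is itself a toric variety, and then to read off its fan directly from the one-parameter degenerations already computed in Theorem~\ref{thm:limit}. Let $\eta$ denote the general one-cycle corresponding to $\pi$, viewed as a point of $\Chow_d(X_\A)$, and let $\overline{T_\tau\cdot\eta}$ be its $T_\tau$-orbit closure with normalization $\nu\colon Z\to\overline{T_\tau\cdot\eta}$. The first step is to pin down the torus that acts effectively: I would compute the stabilizer $\Stab_{T_\tau}(\eta)$ and set $T := T_\tau/\Stab_{T_\tau}(\eta)$ (modulo its identity component), whose cocharacter lattice $N$ I expect to be exactly the lattice in which the fan $\Sigma_\pi$ of Definition~\ref{defn:sigmapi} lives.

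Next I would invoke the structure theory for torus actions. Since the $T_\tau$-action on $\Chow_d(X_\A)$ lifts through the normalization $\nu$, the variety $Z$ is normal, carries a $T$-action, and contains the dense orbit $T\cdot\eta$; by Sumihiro's theorem $Z$ is the toric variety $X_\Sigma$ of some fan $\Sigma$ in $N_\RR$. Because $\Chow_d(X_\A)$ is projective, $\overline{T_\tau\cdot\eta}$ and hence $Z$ are proper, so $\Sigma$ is complete and for every cocharacter $\lambda$ the limit $\lim_{t\to 0}\lambda(t)\cdot\eta$ exists in $\Chow_d(X_\A)$. The orbit--cone correspondence then reduces the entire problem to the limit behaviour: the cone of $\Sigma$ whose relative interior contains $\lambda$ is the one matching the $T$-orbit of $\lim_{t\to 0}\lambda(t)\cdot\eta$, and in particular two cocharacters lie in the relative interior of the same cone precisely when their limit cycles coincide.

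The heart of the argument, and the step I expect to be the main obstacle, is the combinatorial identification $\Sigma = \Sigma_\pi$. For each $\lambda\in N$, Theorem~\ref{thm:limit} presents $\lim_{t\to 0}\lambda(t)\cdot\eta$ explicitly as a one-cycle, whose components and multiplicities are governed by the combinatorial data that $\lambda$ induces on $\pi$ (a selection or subdivision of the relevant faces of $\A$). I would show that this data, and therefore the limit cycle, is locally constant exactly on the relative interiors of the cones of $\Sigma_\pi$, and that distinct cones produce distinct limit cycles. Concretely, this requires verifying that the walls of $\Sigma_\pi$ are precisely the loci across which some component of the limit cycle appears, disappears, or changes multiplicity, and that the maximal cones index the most degenerate limits. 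The delicate point is that one must track the full cycle structure --- not merely the set-theoretic support of the limit --- and confirm that the equivalence relation ``same limit cycle'' is cut out exactly by the cone structure of $\Sigma_\pi$; matching the term-by-term description of Theorem~\ref{thm:limit} against the defining inequalities of $\Sigma_\pi$ is where the real bookkeeping lies.

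Once this matching is in place, the orbit--cone correspondence upgrades it to an equality of fans $\Sigma=\Sigma_\pi$, and $\nu\colon X_{\Sigma_\pi}\to\overline{T_\tau\cdot\eta}$ exhibits the toric variety attached to $\Sigma_\pi$ as the normalization of the orbit closure, as claimed.
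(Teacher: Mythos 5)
Your proposal matches the paper's proof of Theorem \ref{thm:fan} in all essentials: identify the effectively acting torus via the stabilizer (Proposition \ref{prop:stabilizer-ell-ge-1} and Corollary \ref{cor:stab}), observe that the normalization is a complete toric variety with some fan, and pin that fan down as $\Sigma_\pi$ by comparing limit cycles using Theorem \ref{thm:limit} together with the orbit--cone correspondence. The one difference in execution is that the paper (Lemma \ref{lemma:limit}) carries out the matching only for \emph{general} one-parameter subgroups --- where $\tau^v$ is a vertex, so the limit is a sum of torus-invariant lines $\mult_i(\tau^\sigma)\cdot\{X_{\tau^\sigma}\}$ --- and then invokes the fact that a complete fan is determined by its full-dimensional cones, thereby avoiding the wall-by-wall bookkeeping for lower-dimensional cones that you flag as the delicate step.
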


    Understanding the normalization of the $T_\tau$-orbit closure in $\Hilb_{dm+1}$ of a general point $\eta\in Z_\pi$ is much more subtle. Using the Hilbert-Chow morphism, we see that this toric variety is described by a fan given by a refinement of $\Sigma_\pi$ (see Proposition \ref{prop:hilb}). However, in the case of the Hilbert scheme of conics, we are able to say exactly what happens: the fan is the coarsest common refinement of $\Sigma_\pi$ with the normal fan $\Sigma'$ of a certain matroid polytope (see Theorem \ref{thm:conics}).

   The combinatorics of the Cayley structures we consider are essential for guaranteeing that the corresponding curves are contained in a given toric variety. Nonetheless, many of the arguments in this paper can be reduced to the case of rational curves in projective space. A number of general results are known  for rational curves in $\PP^n$ (see e.g. \cite{cubics} on rational cubic curves and \cite{normalb} on decompositions by the splitting type of the normal bundle). 
   In our setting, however, we are forced to consider families of rational curves arising from parametrizations given by products of linear forms in two variables. To our knowledge, results on such families, like our characterization of when the general curve is  non-singular (Theorem \ref{thm:smooth}), cannot be found in the existing literature.

    We briefly comment on the relationship between this paper and \cite{stablemaps,banarjee}. In \cite{stablemaps}, Ranganathan considers the moduli space of log stable maps from $\PP^1$ to a toric variety $X$. The interior of this moduli space, corresponding to maps from $\PP^1$ whose images meet the dense torus of $X$ and intersect the boundary in prescribed fashion, is similar to the family of maps we obtain from a Cayley structure $\pi$. However, the behaviour at the boundary is very different from what happens in the Hilbert or Chow schemes. In \cite{banarjee}, Banerjee gives a combinatorial description of the space of morphisms of fixed multidegree from $\PP^1$ to a simplicial toric variety $X$. This is perhaps more similar to our approach, in the sense that such maps can have images that are contained in the toric boundary. Nonetheless, the focus there remains on the morphism, not the image as in our case.

    The remainder of this paper is organized as follows. In \S\ref{sec:prelim} we introduce some basic notation for toric varieties and define degree $d$ Cayley structures and related notions. We show in \S\ref{sec:curves} how to construct a family of rational curves from any Cayley structure, and conversely how a rational curve in a toric variety determines a corresponding Cayley structure.
    In \S\ref{sec:nodesandcusps} we more closely study the geometry of the rational curves obtained from a degree $d$ Cayley structure, characterizing in particular when these curves are smooth and of degree $d$. 
    Our discussion on irreducible components of the Hilbert scheme is found in \S\ref{sec:hilb}. We conclude in \S\ref{sec:orbits} with a study of torus orbits in the Chow and Hilbert schemes.
We finish this introduction with an example illustrating some of our results.
\begin{introex}[A singular Fano threefold]\label{ex:fano}
Let $\A$ be the subset of $\RR^3$ whose elements are the columns of the matrix 
\[
\left(	\begin{array}{c c c c c c c c c}
-1&0&1&1&0&-1&0&0&0\\
-1&-1&0&1&1&0&0&0&0\\
0&0&0&0&0&0&0&1&-1\\
	\end{array}\right),
\]
see Figure \ref{fig:introex}. The toric variety $X_\A$ is a projectively normal singular Fano threefold in $\PP^8$. The set $\A$ admits $9$ non-equivalent maximal degree two Cayley structures of length $1$:
\begin{align*}
	(u_1,u_2,u_3)&\mapsto (1+u_1+c\cdot u_3,1-u_1-c\cdot u_3)\qquad\qquad c\in \{-1,0,1\}\\
	(u_1,u_2,u_3)&\mapsto (1+u_2+c\cdot u_3,1-u_2-c\cdot u_3)\qquad\qquad c\in \{-1,0,1\}\\
	(u_1,u_2,u_3)&\mapsto (1+u_1-u_2+c\cdot u_3,1-u_1+u_2-c\cdot u_3)\qquad\qquad c\in \{-1,0,1\}.
\end{align*}
The set $\A$ has $12$ two-dimensional faces, each consisting of exactly three points. Up to equivalence, each of these faces has a unique maximal degree two Cayley structure of length $5$. See also Example \ref{ex:cayley} and Figure \ref{fig:cayley}.

All of these Cayley structures give rise to smooth conics in $X_\A$, and hence to irreducible components in the Hilbert scheme of conics in $X_\A$; see Example \ref{ex:fanohilb}. The $9$ length $1$ Cayley structures yield $9$ components of dimension $2$; these components are themselves toric. After appropriate choice of coordinates, the fans corresponding to the toric varieties appearing as the normalizations of these components are pictured in Figure \ref{fig:fans}. See Examples \ref{ex:fanochow} and \ref{ex:fanohilb2}.
The $12$ length $5$ Cayley structures yield $12$ components of dimension $5$. Each of these components is isomorphic to the $\PP^5$ parametrizing conics in the plane.
\end{introex}

\begin{figure}\tiny{
	\begin{tikzpicture}[scale=1.5]
		\draw[fill] (0,0) circle [radius=0.04]  node[below] {$(0,0,0)$};
		\draw[fill] (-1,0) circle [radius=0.04] node[left] {$(-1,0,0)$};
\draw[fill] (0,-1) circle [radius=0.04] node[below right] {$(0,-1,0)$};
\draw[fill] (-1,-1) circle [radius=0.04] node[below left] {$(-1,-1,0)$};
\draw[fill] (1,0) circle [radius=0.04] node[right] {$(1,0,0)$};
\draw[fill] (0,1) circle [radius=0.04] node[above left] {$(0,1,0)$};
\draw[fill] (1,1) circle [radius=0.04] node[above right] {$(1,1,0)$};
\draw[fill] (.5,1.3) circle [radius=0.04] node[above] {$(0,0,1)$};
\draw[fill] (-.5,-1.3) circle [radius=0.04] node[below] {$(0,0,-1)$};
\draw (0,1) -- (-1,0) -- (-1,-1) -- (0,-1) -- (1,0) -- (1,1) -- (.5,1.3) -- (1,0) -- (.5,1.3) -- (0,-1) -- (.5,1.3) -- (-1,-1) -- (.5,1.3) -- (-1,0) -- (.5,1.3) -- (0,1);
\draw (-1,-1) -- (-.5,-1.3) -- (0,-1);
\draw[dashed] (0,1) -- (1,1) -- (-.5,-1.3);
\draw[dashed] (1,0) -- (-.5,-1.3);
\draw[dashed] (-1,0) -- (-.5,-1.3);
\draw[dashed] (0,1) -- (-.5,-1.3);
	\draw[draw=none] (0,-1.9);
\end{tikzpicture}}
\raisebox{1.5em}{\begin{tikzpicture}[scale=1]
\draw[draw=none] (-2,0);
\draw (-1,1) -- (.5,1.3);
\draw (0,1) -- (.5,1.3);
\draw (1,-1) -- (.5,1.3);
\draw (2,1.6) -- (.5,1.3);
\draw (2,2.6) -- (.5,1.3);
\draw (1,3.6) -- (.5,1.3);
\draw (0,3.6) -- (.5,1.3);
\draw (0,1) -- (.5,1.3);

\draw[fill,white] (-1,0) -- (0,-1) -- (1,1.6) -- (0,2.6) -- (-1,0);
\draw[dashed] (-1,1) -- (.5,1.3);
\draw[dashed] (0,1) -- (.5,1.3);
\draw[dashed] (1,0) -- (.5,1.3);
\draw[dashed] (1,-1) -- (.5,1.3);
\draw[dashed] (2,1.6) -- (.5,1.3);
\draw[dashed] (2,2.6) -- (.5,1.3);
\draw[dashed] (1,3.6) -- (.5,1.3);
\draw[dashed] (0,3.6) -- (.5,1.3);
\draw[dashed] (0,1) -- (.5,1.3);

\begin{scope}[shift={(1,2.6)}]
\draw[fill] (-1,0) circle [radius=0.06];
\draw[fill] (0,-1) circle [radius=0.06] ;
\draw[fill] (1,-1) circle [radius=0.06];
\draw[fill] (1,0) circle [radius=0.06] ;
\draw[fill] (0,1) circle [radius=0.06] ;
\draw[fill] (-1,1) circle [radius=0.06];
\draw[lightgray] (0,1) -- (-1,1) -- (-1,0) -- (0,-1) -- (1,-1) -- (1,0) -- (0,1);
\end{scope}

\begin{scope}
\draw[fill] (-1,0) circle [radius=0.06];
\draw[fill] (0,-1) circle [radius=0.06] ;
\draw[fill] (1,-1) circle [radius=0.06];
\draw[draw, circle] (1,0) circle [radius=0.06] ;
\draw[draw, circle] (0,1) circle [radius=0.06] ;
\draw[fill] (-1,1) circle [radius=0.06];
\draw[lightgray] (-1,1) -- (-1,0) -- (0,-1) -- (1,-1);
\draw[lightgray,dashed] (1,-1) -- (1,0) -- (0,1) -- (-1,1);
\end{scope}
\begin{scope}[shift={(-1,0)}]
	\draw[lightgray] (0,0) -- (1,2.6);
\end{scope}
\begin{scope}[shift={(1,0)}]
	\draw[lightgray,dashed] (0,0) -- (1,2.6);
\end{scope}
\begin{scope}[shift={(0,1)}]
	\draw[lightgray,dashed] (0,0) -- (1,2.6);
\end{scope}
\begin{scope}[shift={(0,-1)}]
	\draw[lightgray] (0,0) -- (1,2.6);
\end{scope}
\begin{scope}[shift={(1,-1)}]
	\draw[lightgray] (0,0) -- (1,2.6);
\end{scope}
\begin{scope}[shift={(-1,1)}]
	\draw[lightgray] (0,0) -- (1,2.6);
\end{scope}
\draw[fill] (.5,1.3) circle [radius=0.06];
\draw (-1,0) -- (.5,1.3);
\draw (0,-1) -- (.5,1.3);
\draw (0,2.6) -- (.5,1.3);
\draw (1,1.6) -- (.5,1.3);
\end{tikzpicture}}

\caption{The set $\A$ for a Fano threefold and the corresponding normal fan (Example \ref{ex:fano}).}\label{fig:introex}
\end{figure}

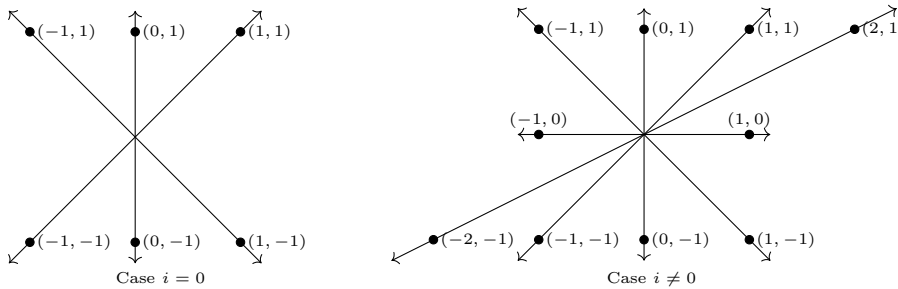
\begin{figure}{\tiny
		\[		\begin{array}{c @{\qquad\qquad}c}
\begin{tikzpicture}[scale=1.4]
\draw[->] (0,0) -- (-1.2,1.2);
\draw[->] (0,0) -- (0,1.2);
\draw[->] (0,0) -- (1.2,1.2);
\draw[->] (0,0) -- (-1.2,-1.2);
\draw[->] (0,0) -- (0,-1.2);
\draw[->] (0,0) -- (1.2,-1.2);
\draw[fill] (0,1) circle [radius=0.04] node[right] {$(0,1)$};
\draw[fill] (-1,1) circle [radius=0.04] node[right] {$(-1,1)$};
\draw[fill] (1,1) circle [radius=0.04] node[right] {$(1,1)$};
\draw[fill] (0,-1) circle [radius=0.04] node[right] {$(0,-1)$};
\draw[fill] (-1,-1) circle [radius=0.04] node[right] {$(-1,-1)$};
\draw[fill] (1,-1) circle [radius=0.04] node[right] {$(1,-1)$};
	\end{tikzpicture}&
	\begin{tikzpicture}[scale=1.4]
		\draw[->] (0,0) -- (-2.4,-1.2);
		\draw[->] (0,0) -- (2.4,1.2);
		\draw[->] (0,0) -- (-1.2,0);
		\draw[->] (0,0) -- (1.2,0);
		\draw[->] (0,0) -- (-1.2,1.2);
\draw[->] (0,0) -- (0,1.2);
\draw[->] (0,0) -- (1.2,1.2);
\draw[->] (0,0) -- (-1.2,-1.2);
\draw[->] (0,0) -- (0,-1.2);
\draw[->] (0,0) -- (1.2,-1.2);
\draw[fill] (1,0) circle [radius=0.04] node[above] {$(1,0)$};
\draw[fill] (-1,0) circle [radius=0.04] node[above] {$(-1,0)$};
\draw[fill] (0,1) circle [radius=0.04] node[right] {$(0,1)$};
\draw[fill] (-1,1) circle [radius=0.04] node[right] {$(-1,1)$};
\draw[fill] (1,1) circle [radius=0.04] node[right] {$(1,1)$};
\draw[fill] (0,-1) circle [radius=0.04] node[right] {$(0,-1)$};
\draw[fill] (-1,-1) circle [radius=0.04] node[right] {$(-1,-1)$};
\draw[fill] (1,-1) circle [radius=0.04] node[right] {$(1,-1)$};
\draw[fill] (2,1) circle [radius=0.04] node[right] {$(2,1)$};
\draw[fill] (-2,-1) circle [radius=0.04] node[right] {$(-2,-1)$};
	\end{tikzpicture}\\
	\textrm{Case $i=0$}& \textrm{Case $i\neq 0$}
\end{array}\]
\caption{Fans for components of the Hilbert scheme of conics (Example \ref{ex:fano}).}\label{fig:fans}
}\end{figure}

    \subsection*{Acknowledgements}
Both authors were supported by NSERC Discovery Grants. We thank Dhruv Ranganathan, Sandra Di Rocco, and Luca Schaffler for helpful discussions. We thank the anonymous referee for insightful comments.

\section{Preliminaries}\label{sec:prelim}
    \subsection{Toric Varieties}\label{sec:toric}
We will always be working over an algebraically closed field $\KK$ of characteristic zero. Fix a lattice $M$. To a finite subset $\A\subset M$, we associate the (not necessarily normal) projective toric variety
\[X_\A=\Proj \KK[S_\A]\subset \PP^{\#\A-1}\]
where $S_\A$ is the semigroup generated by elements $(u,1)\in M\times \ZZ$ for $u\in \A$, and $\KK[S_\A]$ is the corresponding semigroup algebra.
For any subset $\tau$ of $M$, we will use $\langle \tau \rangle$ to denote the sublattice consisting of all differences of elements of $\tau$.

Given $v\in \A$, we denote the associated homogeneous coordinate of $X_\A$ by $x_v$.
The variety $X_\A$ comes equipped with an action of the torus $T_\A=\spec \KK[\langle \A \rangle]$; the action on the projective coordinate $x_v$ has weight $v$.
We will use the notation $T=\spec \KK[M]$ and note that the tori $T_\tau$ (for $\tau$ a subset of $\A$) are quotients of $T$. In particular, $T$ also acts on $X_\A$.
For more details on toric varieties see \cite{cls}.

A \emph{face} $\tau$ of $\A$ is the intersection of $\A$ with a face of the convex hull $\conv \A$, and we write $\tau \preceq \A$.  Note that we consider $\A$ to be a face of itself. 
There is a natural closed embedding $X_\tau\subset X_\A$ determined by the homomorphism $\KK[S_\A]\to \KK[S_\tau]$ which for any $v\in \A$ sends
\begin{equation*}
x_v\mapsto \begin{cases}
	x_v& v\in \tau\\
	0 &v\notin \tau
\end{cases}.
\end{equation*}

Given a face $\tau$ of $\A$, we let $x_\tau$ be the point of $X_\A$ such that 
$x_u=0$ for $u\notin \tau$ and $x_u=1$ for $u\in \tau$. We note that the $T_\tau$ orbit of $x_\tau$ is dense in $X_\tau$. We write $\partial X_\tau$ for the complement of this orbit, i.e. $\partial X_\tau = \bigcup_{\tau' \prec \tau} X_{\tau'}$.

\subsection{Degree-$d$ Cayley Structures}\label{sec:cayley}
We now generalize the notion of a Cayley structure from \cite[Definition 3.1]{fano}. 
For natural numbers $d$ and $\ell$, we set
\[\Delta_\ell(d)= \{(u_0,\ldots,u_\ell)\in \ZZ^{\ell+1}_{\geq 0}\ :\ u_0+\ldots+u_\ell=d\}.\]
Note that the convex hull of $\Delta_\ell(d)$ is the $d$-th dilation of an $\ell$-dimensional standard simplex.
We will use $e_0,\ldots,e_\ell$ to refer to the elements of the standard basis of $\ZZ^{\ell+1}$, and $e_0^*,\ldots,e_\ell^*$ to refer to the dual basis elements.

Let $\tau$ be a face of $\A$.
\begin{defn}\label{defn: Cayley structure}
A \emph{weak Cayley structure} of length $\ell$ and degree $d$ on $\tau$ is a non-constant affine-linear map $\pi:\tau\to \Delta_\ell(d)$.  

Consider any map $\pi:\tau\to\Delta_\ell(d)$.
\begin{enumerate}
	\item We say $i\in \{0,\ldots, \ell\}$ is a basepoint for $\pi$ if for every $w\in\pi(\tau)$, $w_i>0$. We say $\pi$ is \emph{basepoint-free} if 
		no element of $\{0,\ldots,\ell\}$ is a basepoint.
	\item We say that $\pi$ is \emph{concise} if for every $0\leq i \leq \ell$ there exists $v\in \pi(\tau)$ such that $v_i \neq 0$.
\end{enumerate}
A \emph{Cayley structure} is a weak Cayley structure that is basepoint-free and concise.
\end{defn}

Note that in the special case $d=1$, basepoint-freeness and conciseness are equivalent to surjectivity of $\pi$.
We call any two (weak) Cayley structures equivalent if they differ only by a permutation of the basis vectors of $\ZZ^{\ell+1}$. This is the same as identifying any two (weak) Cayley structures differing by an affine automorphism of $\Delta_\ell(d)$; this defines an equivalence relation on the set of (weak) Cayley structures.

Let $\B\subseteq \Delta_\ell(d)$ be the image of a weak Cayley structure $\pi:\tau\to \Delta_\ell(d)$. The map $\pi$ determines a surjective ring homomorphism
\begin{align*}
	\KK[S_\tau]&\to \KK[S_\B]\\
	x_u&\mapsto x_{\pi(u)}
\end{align*}
and hence an embedding $X_\B\hookrightarrow X_\tau$. This induces an inclusion of tori $T_\B \to T_\tau$; we will denote by $T_\pi$ the image of $T_\B$ in $T_\tau$. 
Let $N_\tau=\Hom(\langle \tau \rangle,\ZZ)$ be the cocharacter lattice of $T_\tau$.
The weak Cayley structure $\pi:\tau\to \Delta_\ell(d)$ induces a linear map
\begin{align*}
	\pi^*:(\ZZ^{\ell+1})^*&\to N_\tau
\end{align*}
where for $v\in \ZZ^{\ell+1}$ and $u,w\in \tau$, $\pi^*(v)$ sends $u-w\in\langle \tau\rangle $ to $v(\pi(u)-\pi(w))$.

\begin{defn}
	Let $\pi:\tau\to \Delta_\ell(d)$ be a weak Cayley structure. Let $c \in \mathbb{Z}^{\ell+1}$ be the coordinatewise minimum of $\pi$, that is, $c_i = \min\{\pi(u)_i : u \in \tau\}$. The \emph{resolution} of $\pi$ is the map
	\begin{align*}
\Res(\pi) :\tau&\to\Delta_\ell(d')\\
u&\mapsto \pi(u)- c
	\end{align*}
where $d' = d - \sum c_i$.

\end{defn}
For any weak Cayley structure $\pi$, the resolution $\Res(\pi)$ will be basepoint-free. If $\pi$ is already basepoint-free, $\Res(\pi)=\pi$.

\begin{defn}
Let $\pi:\tau\to \Delta_\ell(d)$ be a weak Cayley structure, and let $F$ be the minimal face of $\Delta_\ell(d)$ containing $\pi(\tau)$.
A \emph{concision} of $\pi$ is the composition of $\pi$ with any affine linear bijection $F\to \Delta_{\ell'}(d)$, where $\ell'=\dim F$.
\end{defn}
For any weak Cayley structure $\pi$, its concisions will all be equivalent. Furthermore, they will be concise. If $\pi$ was basepoint-free, then so is any concision. Hence, given a weak Cayley structure $\pi$, we may always obtain a Cayley structure by considering a concision of $\Res(\pi)$. Note that equivalent Cayley structures have equivalent resolutions and concisions.

\begin{rem}
	The notion of a weak Cayley structure is closely related to the generalized $\pi$-twisted Cayley sums of \cite[Definition 2.2]{pointedfamilies}. Indeed, consider $R$ a $\pi$-twisted Cayley sum over the polytope $F$. If $F$ is a dilate of a standard simplex, then by restricting to the lattice points of $R$, the map $\pi$ gives a Cayley structure. For more general $F$ we may choose an (affine-linear) inclusion of $F$ in a dilate of a standard simplex; by restricting to the lattice points of $R$ we obtain a weak Cayley structure (which may be made into a Cayley structure by taking a concision of its resolution).
\end{rem}

\begin{ex}[Cayley structures on a Fano threefold]\label{ex:cayley}
	We consider the set $\A$ from Example \ref{ex:fano} and pictured in Figure \ref{fig:introex}.
Up to equivalence, there are exactly nine length $1$ degree $2$ Cayley structures defined on the entire set $\A$. The images of the elements of $\A$ under the two Cayley structures
\begin{align*}
\pi:(u_1,u_2,u_3)&\mapsto (1+u_2,1-u_2);\\
\pi':(u_1,u_2,u_3)&\mapsto (1+u_2+u_3,1-u_2-u_3)
\end{align*}
are depicted in the top of Figure \ref{fig:cayley}.
A length $5$ degree $2$ Cayley structure $\pi''$ on the face $\tau=\{(1,0,0),(0,-1,0),(0,0,1)\}$ is depicted in the bottom of Figure \ref{fig:cayley}. Up to equivalence, this is the only length $5$ degree $2$ Cayley structure on $\tau$.

Restricting the Cayley structure $\pi$ to the face $\tau$, we obtain a weak Cayley structure $\pi|_\tau$ that is concise, but not basepoint-free: $1$ is a basepoint. The resolution of $\pi|_\tau$ is the degree $1$ Cayley structure
\[
	\Res(\pi|_\tau):(u_1,u_2,u_3)\mapsto (1+u_2,-u_2).
\]
sending $(1,0,0)$ and $(0,0,1)$ to $e_0$, and $(0,-1,0)$ to $e_1$.

Let $\tau'$ be the face $\tau'=\{(0,-1,0),(0,0,1)\}$. Restricting $\pi''$ to $\tau'$ we obtain a weak Cayley structure $\pi''|_{\tau'}$ that is basepoint-free but not concise. A concision for this weak Cayley structure is given by the length $3$ Cayley structure sending $(0,-1,0)$ to $e_2+e_3$ and $(0,0,1)$ to $e_0+e_1$.
\end{ex}

\begin{figure}\tiny{
		\begin{center}\begin{tikzpicture}[scale=1.2]
				\draw[draw=none] (0,-2);
				\draw[dashdotted,blue, very thick,rounded corners=.3cm] (-.7,-1) -- (-.7,-1.5) -- (-.3,-1.5) -- (-.3,-.2) -- (1.5,-.2) -- (1.5,.2) -- (.7,.2) -- (.7,1.5) -- (.3,1.5) -- (.3,.2) -- (-1.5,.2) -- (-1.5,-.2) -- (-.7,-.2) -- (-.7,-1);
\draw[dashdotted,red, very thick,rounded corners=.3cm] (.5,1.8) -- (-.3,1.8) -- (-.3,.7) -- (.2,.7) -- (.2,1.65) -- (.8,1.65) -- (.8,.7) -- (1.3,.7) -- (1.3,1.8) -- (.5,1.8);
\draw[dashdotted,green, very thick,rounded corners=.3cm] (-.5,-1.8) -- (-1.3,-1.8) -- (-1.3,-.7) -- (-.8,-.7) -- (-.8,-1.65) -- (-.2,-1.65) -- (-.2,-.7) -- (.3,-.7) -- (.3,-1.8) -- (-.5,-1.8);
		\draw[fill] (0,0) circle [radius=0.04];  
		\draw[fill] (-1,0) circle [radius=0.04]; 
\draw[fill] (0,-1) circle [radius=0.04]; 
\draw[fill] (-1,-1) circle [radius=0.04];
\draw[fill] (1,0) circle [radius=0.04]; 
\draw[fill] (0,1) circle [radius=0.04];
\draw[fill] (1,1) circle [radius=0.04];
\draw[fill] (.5,1.3) circle [radius=0.04]; 
\draw[fill] (-.5,-1.3) circle [radius=0.04];
\draw (0,1) -- (-1,0) -- (-1,-1) -- (0,-1) -- (1,0) -- (1,1) -- (.5,1.3) -- (1,0) -- (.5,1.3) -- (0,-1) -- (.5,1.3) -- (-1,-1) -- (.5,1.3) -- (-1,0) -- (.5,1.3) -- (0,1);
\draw (-1,-1) -- (-.5,-1.3) -- (0,-1);
\draw (-.5,1) node [left] {$2e_0$};
\draw (-1.6,0) node [left] {$e_0+e_1$};
\draw (-1.6,-1) node [left] {$2e_1$};
\end{tikzpicture}
\qquad\qquad
\begin{tikzpicture}[scale=1.2]
				\draw[draw=none] (0,-2);
		\draw[fill] (0,0) circle [radius=0.04];  
		\draw[fill] (-1,0) circle [radius=0.04]; 
\draw[fill] (0,-1) circle [radius=0.04]; 
\draw[fill] (-1,-1) circle [radius=0.04];
\draw[fill] (1,0) circle [radius=0.04]; 
\draw[fill] (0,1) circle [radius=0.04];
\draw[fill] (1,1) circle [radius=0.04];
\draw[fill] (.5,1.3) circle [radius=0.04]; 
\draw[fill] (-.5,-1.3) circle [radius=0.04];
\draw (0,1) -- (-1,0) -- (-1,-1) -- (0,-1) -- (1,0) -- (1,1) -- (.5,1.3) -- (1,0) -- (.5,1.3) -- (0,-1) -- (.5,1.3) -- (-1,-1) -- (.5,1.3) -- (-1,0) -- (.5,1.3) -- (0,1);
\draw (-1,-1) -- (-.5,-1.3) -- (0,-1);
\draw[dashdotted,red, very thick,rounded corners=.5cm] (0,.8) -- (1.5,.8) -- (.5, 1.5) -- (-.5,.8) -- (0,.8);
\draw[dashdotted,blue, very thick,rounded corners=.3cm] (0,-.2) -- (1.5,-.2) -- (1.5,.2) -- (-1.5,.2) -- (-1.5,-.2) -- (0,-.2);
\draw[dashdotted,green, very thick,rounded corners=.5cm] (-1,-.8) -- (.5,-.8) -- (-.5, -1.5) -- (-1.5,-.8) -- (-1,-.8);
\draw (-.5,1) node [left] {$2e_0$};
\draw (-1.6,0) node [left] {$e_0+e_1$};
\draw (-1.6,-1) node [left] {$2e_1$};
\end{tikzpicture}
	\begin{tikzpicture}[scale=1.2]
\draw[fill] (0,-1) circle [radius=0.04] node[below right] {$e_4+e_5$};
\draw[fill] (1,0) circle [radius=0.04] node[below right] {$e_2+e_3$};
\draw[fill] (.5,1.3) circle [radius=0.04] node[above] {$e_0+e_1$};
\draw[fill,lightgray] (1,0,0) -- (0,-1,0) -- (.5,1.3) ;
\draw (0,1) -- (-1,0) -- (-1,-1) -- (0,-1) -- (1,0) -- (1,1) -- (.5,1.3) -- (1,0) -- (.5,1.3) -- (0,-1) -- (.5,1.3) -- (-1,-1) -- (.5,1.3) -- (-1,0) -- (.5,1.3) -- (0,1);
\draw (-1,-1) -- (-.5,-1.3) -- (0,-1);
\draw[dashed] (0,1) -- (1,1) -- (-.5,-1.3);
\draw[dashed] (1,0) -- (-.5,-1.3);
\draw[dashed] (-1,0) -- (-.5,-1.3);
\draw[dashed] (0,1) -- (-.5,-1.3);
\end{tikzpicture}
\end{center}}
\caption{Three Cayley structures for a Fano threefold (Example \ref{ex:cayley}). On top, elements with common image under the Cayley structure are grouped by the dashed lines. On the bottom, the Cayley structure is defined only on the highlighted face. Elements are labeled by their images under the Cayley structure.}\label{fig:cayley}
\end{figure}

\section{Cayley Structures and Rational Curves}\label{sec:curves}
\subsection{Constructing Curves}\label{sec:curveconst}
Fix a natural number $\ell$. For $i=0,\ldots, \ell$, we consider linear forms $f_i\in \KK[y_0,y_1]$. We will write
\[
\ff=(f_0,\ldots,f_\ell)
\]
and for $v=(v_0,\ldots,v_\ell)\in\ZZ^{\ell+1}$ set
\[
	\ff^v=\prod_{i=0}^\ell f_i^{v_i}.
\]
\begin{defn}[Rational curve from a weak Cayley structure] \label{defn:cayley-curve}
Let $\pi:\tau\to \Delta_\ell(d)$ be a weak Cayley structure. Let $\ff$ be a tuple of linear forms. We define
\begin{align*}
	\rho_{\pi,\ff}:\PP^1 &\dashrightarrow X_\tau\subset \PP^{\#\A-1}, \\
	y &\mapsto \left( \ff^{\pi(u)}\right)_{u\in \A},
\end{align*}
where we adopt the convention that for $u\notin\tau$, $\ff^{\pi(u)}:= 0$. We denote the closure of the image of $\rho_{\pi,\ff}$ by $C_{\pi,\ff}$. Note that $C_{\pi, \ff} \subseteq X_\tau$ because $\pi$ is affine-linear.
\end{defn}

It is straightforward to observe that $C_{\pi,\ff}=C_{\Res(\pi),\ff}$.
In the special case that $\pi$ has length $1$, the curve $C_{\pi,\ff}$ is independent of $\ff$ as long as its entries have distinct roots. In this case, we may simply write $C_\pi$ for the image of $\rho_{\pi,\ff}$.

\begin{prop}\label{prop:curve}
Let $\pi$ and $\ff$ be as in Definition \ref{defn:cayley-curve}. Suppose the entries of $\ff$ have distinct roots and $\pi$ is basepoint-free. 
\begin{enumerate}
	\item The map $\rho_{\pi, \ff}$ is a basepoint-free morphism.\label{itemi}
	\item If in addition $\pi$ is concise, the preimage of the boundary, $\rho_{\pi, \ff}^{-1}(\partial X_\tau)$, is the set of roots of the $f_i$.\label{itemii}
\end{enumerate}
\end{prop}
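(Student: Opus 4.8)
The plan is to reduce both statements to a pointwise analysis of when the homogeneous coordinate functions $\ff^{\pi(u)}$ vanish, exploiting the hypothesis that the $f_i$ have distinct roots. The central observation is that each $f_i$ is a nonzero linear form and so vanishes at a single point of $\PP^1$; since these roots are distinct, at most one of the $f_i$ vanishes at any given $y^\ast\in\PP^1$. Writing $\ff^{\pi(u)}=\prod_{i=0}^\ell f_i^{\pi(u)_i}$, it follows that $\ff^{\pi(u)}(y^\ast)=0$ if and only if $y^\ast$ is the root of some $f_{i_0}$ with $\pi(u)_{i_0}>0$. This dictionary translates the vanishing of coordinates into the combinatorial conditions (basepoint-freeness, conciseness) imposed on $\pi$.

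For the first claim, note that a rational map from the smooth complete curve $\PP^1$ to the projective variety $X_\tau$ is a morphism exactly when its defining tuple $(\ff^{\pi(u)})_{u\in\tau}$ has no common zero on $\PP^1$; this simultaneously gives basepoint-freeness. I would check the no-common-zero condition pointwise. Fix $y^\ast$. If no $f_i$ vanishes at $y^\ast$, every $\ff^{\pi(u)}(y^\ast)$ is nonzero. Otherwise exactly one $f_{i_0}$ vanishes there; since $\pi$ is basepoint-free, $i_0$ is not a basepoint, so there is some $u\in\tau$ with $\pi(u)_{i_0}=0$, and then every factor of $\ff^{\pi(u)}(y^\ast)$ is nonzero. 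Either way some coordinate is nonzero, so the coordinates never vanish simultaneously.

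For the second claim, the essential geometric input is that a point of $X_\tau$ lies in the dense $T_\tau$-orbit precisely when all of its coordinates $x_u$ with $u\in\tau$ are nonzero, so that $\partial X_\tau$ is exactly the locus where at least one such coordinate vanishes; this is the orbit--face correspondence for projective toric varieties \cite{cls}, and stating it cleanly is the step I expect to require the most care. Granting it, $\rho_{\pi,\ff}(y^\ast)\in\partial X_\tau$ if and only if $\ff^{\pi(u)}(y^\ast)=0$ for some $u\in\tau$. If $y^\ast$ is not a root of any $f_i$, then all coordinates are nonzero and the image lies in the dense orbit, giving one inclusion. Conversely, if $y^\ast$ is the root of $f_{i_0}$, then conciseness of $\pi$ yields $u\in\tau$ with $\pi(u)_{i_0}>0$, whence $\ff^{\pi(u)}(y^\ast)=0$ and the image lies in $\partial X_\tau$, giving the reverse inclusion. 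Hence $\rho_{\pi,\ff}^{-1}(\partial X_\tau)$ is exactly the set of roots of the $f_i$.

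In summary, aside from invoking the toric boundary description in the second claim, the entire argument rests on the single elementary remark that distinct roots force at most one $f_i$ to vanish at a time, combined directly with the definitions of basepoint-freeness and conciseness.
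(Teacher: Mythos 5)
Your proposal is correct and follows essentially the same argument as the paper: both reduce to the observation that at most one $f_i$ vanishes at a given point of $\PP^1$, then invoke basepoint-freeness of $\pi$ to produce a nonvanishing coordinate for (1) and conciseness to characterize when some coordinate vanishes for (2). The only difference is that you spell out the case split and the toric boundary description slightly more explicitly, which the paper leaves implicit.
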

\begin{proof}
	Let $y \in \PP^1$. Since we assumed the entries of $\ff$ have distinct roots, at most one $f_i$ vanishes at $y$, so let $i$ be such that $f_j(y) \ne 0$ for all $j \ne i$. Since $\pi$ is basepoint-free, there exists $u \in \A$ such that $\pi(u)_i = 0$, that is, $f_i \nmid f^{\pi(u)}$. Then $\ff^{\pi(u)}(y) \ne 0$ as it is a product of nonvanishing forms. This shows (\ref{itemi}). For (\ref{itemii}), we have $\rho_{\pi, \ff}(y) \in \partial X_\tau$ if and only if $\ff^{\pi(u)}(y) = 0$ for some $u$. Since $\pi$ is concise, this holds if and only if $y$ is a root of some $f_i$.
\end{proof}

For the remainder of this section, we will assume that $\pi$ is a Cayley structure, that is, is both basepoint-free and concise. We may reduce to this situation by replacing a weak Cayley structure by a concision of its resolution.
By (\ref{itemii}), the image curve $C_{\pi, \ff}$ intersects the dense torus orbit of $X_\tau$.
By acting on the curve by the torus $T_\tau$, we obtain a family of curves $\{t\cdot C_{\pi,\ff}\}_{t\in T_\tau}$. Note that the roots of the $f_i$ (as in Proposition \ref{prop:curve}(\ref{itemii})) give well-defined marked points on $\PP^1$, but determine the forms $f_i$ only up to scalar multiple. These scalars are captured by the action of the torus $T_\tau$:

\begin{lemma}\label{lem:cayley-scalars}
Let $\ff = (f_0, \ldots, f_\ell)$ and let $\tilde \ff = (c_0 f_0, \ldots, c_\ell f_\ell)$ where $c_i \in \KK^*$ for all $i$. Then $\rho_{\pi, \ff} = t \cdot \rho_{\pi, \tilde \ff}$ for some unique $t \in T_\tau$.
\end{lemma}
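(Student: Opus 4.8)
The plan is to track how rescaling the forms rescales the coordinate functions of $\rho_{\pi,\ff}$, and then to recognize the resulting scalars as the effect of a single torus element. Write $\mathbf{c}=(c_0,\dots,c_\ell)$ and $\mathbf{c}^{w}=\prod_i c_i^{w_i}$ for $w\in\ZZ^{\ell+1}$. The first step is the pointwise identity
\[
\tilde\ff^{\pi(u)}=\prod_{i=0}^\ell (c_i f_i)^{\pi(u)_i}=\mathbf{c}^{\pi(u)}\,\ff^{\pi(u)},\qquad u\in\tau,
\]
so that $\rho_{\pi,\tilde\ff}(y)=\bigl(\mathbf{c}^{\pi(u)}\ff^{\pi(u)}(y)\bigr)_{u}$ differs from $\rho_{\pi,\ff}(y)$ only through the coordinatewise scalars $\mathbf{c}^{\pi(u)}$. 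The goal is then to produce $t\in T_\tau$ whose action undoes these scalars, i.e.\ so that $t^u\mathbf{c}^{\pi(u)}$ is independent of $u\in\tau$; two coordinate tuples differing by a common scalar define the same point of $\PP^{\#\A-1}$, which is exactly the desired equality $\rho_{\pi,\ff}=t\cdot\rho_{\pi,\tilde\ff}$.

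The key step is constructing $t$. Because $T_\tau$ acts only projectively on $X_\tau$, the element $t$ is pinned down by its ratios $t^{u-w}$ for $u,w\in\tau$, which live on $\langle\tau\rangle$; concretely $t\in T_\tau=\Hom(\langle\tau\rangle,\KK^*)$. The requirement that $t^u\mathbf{c}^{\pi(u)}$ be constant on $\tau$ reads $t^{u-w}=\mathbf{c}^{-(\pi(u)-\pi(w))}$ for all $u,w\in\tau$. Since $\pi$ is affine-linear, $\pi(u)-\pi(w)$ depends only on $u-w$, so the assignment $m\mapsto\mathbf{c}^{-(\pi(u)-\pi(w))}$ (with $m=u-w$) is a well-defined group homomorphism $\langle\tau\rangle\to\KK^*$ — namely the composite of the lattice map $u-w\mapsto\pi(u)-\pi(w)$ (the transpose of $\pi^*$) with $w\mapsto\mathbf{c}^{-w}$. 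Taking $t$ to be this homomorphism produces an element of $T_\tau$ for which $t^{u-w}\,\mathbf{c}^{\pi(u)}/\mathbf{c}^{\pi(w)}=1$, i.e.\ $t^u\mathbf{c}^{\pi(u)}$ is constant, settling existence.

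For uniqueness, suppose $t_1,t_2\in T_\tau$ both satisfy the conclusion. Then $s=t_2^{-1}t_1$ fixes every point of the image curve $C_{\pi,\tilde\ff}$. By Proposition \ref{prop:curve}(\ref{itemii}), which applies because $\pi$ is a Cayley structure and the entries of $\tilde\ff$ have the same (distinct) roots as those of $\ff$, the preimage of $\partial X_\tau$ is finite; hence $C_{\pi,\tilde\ff}$ meets the dense $T_\tau$-orbit of $X_\tau$. As $T_\tau$ acts simply transitively on this orbit, an element fixing a point there must be the identity, so $s=e$ and $t_1=t_2$.

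I expect the main obstacle to be conceptual bookkeeping rather than computation: verifying that the prescribed scalars genuinely assemble into an element of $T_\tau$, rather than merely of the ambient torus $T$. This is exactly where affine-linearity of $\pi$ is essential, as it is what forces the ratios $t^{u-w}$ to depend only on $u-w$ and so to descend to a character of $\langle\tau\rangle$ (equivalently, to factor through the transpose of $\pi^*$). Once this well-definedness is established, both existence and uniqueness follow quickly.
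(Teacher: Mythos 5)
Your proof is correct, but its mechanics differ from the paper's in both steps, so a comparison is worthwhile. For existence, the paper never constructs $t$ as a character: it packages the scalars $\lambda_u=\mathbf{c}^{\pi(u)}$ into a projective point $\lambda=(\lambda_u)_{u\in\tau}$, uses affine-linearity of $\pi$ to check that $\lambda$ satisfies the binomial defining equations of $X_\tau$, and concludes that $\lambda$, having no vanishing coordinates, lies in the dense orbit, i.e.\ $\lambda=t\cdot x_\tau$ for a unique $t\in T_\tau$. Your argument is the dual formulation: you build $t$ explicitly as the composite of the linear part of $\pi$ (the transpose of $\pi^*$) with the character $w\mapsto\mathbf{c}^{-w}$, with affine-linearity guaranteeing well-definedness on $\langle\tau\rangle$ --- the same place the hypothesis enters in the paper, but used to define a homomorphism rather than to verify equations; a small bonus of your construction is that it exhibits $t$ as an explicit element of the subtorus $T_\pi\subseteq T_\tau$. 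For uniqueness, the paper gets it essentially for free from simple transitivity of the $T_\tau$-action on the dense orbit applied to the point $\lambda$ itself, whereas you apply freeness at a point of the image curve, which requires knowing the curve meets the dense orbit. Citing Proposition \ref{prop:curve}(\ref{itemii}) for this imports the distinct-roots and conciseness hypotheses; distinct roots is only a standing assumption of the surrounding discussion, not part of the lemma's statement. This is not a gap in context, but note that all you actually need is some $y\in\PP^1$ with $\ff^{\pi(u)}(y)\neq 0$ for every $u$, which holds for generic $y$ whenever the $f_i$ are nonzero, with no distinct-roots, basepoint-freeness, or conciseness hypotheses.
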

\begin{proof}
We have $\rho_{\pi, \ff}(y) = (\ff^{\pi(u)} : u \in \tau)$ and $\rho_{\pi, \tilde\ff}(y) = (\tilde\ff^{\pi(u)} : u \in \tau)$. For each $u \in \tau$, let $\lambda_u = (c_0, \ldots, c_\ell)^{\pi(u)} \in \KK^*$, so
\begin{equation}\label{eqn:cayley-scalars}
\tilde \ff^{\pi(u)} = \lambda_u \ff^{\pi(u)}.
\end{equation}
Note that both $\rho_{\pi, \ff}$ and $\rho_{\pi, \tilde \ff}$ satisfy the defining equations of $X_\tau$. In particular, consider an affine relation $\sum_{u \in \tau} a_u u = \sum_{u \in \tau} b_u u$ for some coefficients $a_u, b_u \in \NN$. Applying this to Equation \ref{eqn:cayley-scalars}, we see that the point $\lambda = (\lambda_u : u \in \tau) \in \PP^{|\tau|-1}$ itself satisfies these relations, i.e. is a point of the interior of $X_\tau$ and so a translate of the distinguished point $x_\tau \in X_\tau$. That is, $\lambda = t \cdot x_\tau$ for some unique $t \in T_\tau$.
\end{proof}

Recall that $M_{0,\ell+1}$ is the moduli space of $\ell+1$ distinct marked points on $\PP^1$ up to automorphism. 

\begin{defn}[Family of curves from a Cayley structure]\label{defn:little}
Let $\bp = (P_0, \ldots, P_\ell) \in M_{0, \ell+1}$. We choose coordinates $P_0 = (1 : 0)$ and $P_i = (c_i : 1)$ for $i > 0$, with $c_1 = 0, c_2 = 1$. We let $\ff(\bp) = (y_1, y_0, y_0 - y_1, \ldots, y_0 - c_\ell y_1)$ and define
\begin{align*}
\rho_\pi : M_{0,\ell+1} \times T_\tau \times \PP^1 \to X_\tau,\\
\rho_\pi(\bp, t, s) = t \cdot \rho_{\pi, \ff(\bp)}(s).
\end{align*}
When $\ell=1$, we abuse notation and set $M_{0,\ell+1}=\spec \KK$, and $\ff(\bp)=(y_1,y_0)$.
\end{defn}
We show below in Proposition \ref{prop:curvetocayley} that the family $\rho_\pi$ induces a quasifinite map from $M_{0,\ell+1} \times T_\tau$ to the Hilbert scheme as long as $\ell>1$, which we use for dimension counting. 

If $\B\subseteq \Delta_\ell(d)$ is the image of a Cayley structure $\pi$, we may also consider the Cayley structure $\iota:\B\to \Delta_\ell(d)$, where $\iota$ is just the inclusion. The curve $C_{\B,\ff}:=C_{\iota,\ff}$ is an isomorphic linear projection of $C_{\pi,\ff}$. Hence, when considering properties of $C_{\pi,\ff}$ such as degree or arithmetic genus, we may consider instead the corresponding properties of $C_{\B,\ff}$.
We will also denote the map $\rho_{\iota,\ff}$ by $\rho_{\B,\ff}$.

\subsection{From Curve to Cayley Structure}\label{sec:curvetocayley}
We show that all rational curves in $X_\A$ arise via the above construction.

\begin{prop}\label{prop:curvetocayley}
Consider a rational curve $C\subseteq X_\tau$ of degree $d$ that intersects the dense torus orbit of $X_\tau\subseteq X_\A$. 
\begin{enumerate}
	\item
		There exists  $\ell\in \NN$,  a Cayley structure $\pi:\tau\to \Delta_\ell(d)$, a point $\bp\in M_{0,\ell+1}$, and $t \in T_\tau$ such that 
	\[
		C=t\cdot C_{\pi, \ff(\bp)}.
	\]
\item The Cayley structure $\pi$ and $\bp\in M_{0,\ell+1}$ are uniquely determined up to permutation by an element of the symmetric group $S_{\ell+1}$. 
\item	If $\ell>1$, there are only finitely many $t\in T_\tau$ such that $t \cdot C = C$.
\end{enumerate}
\end{prop}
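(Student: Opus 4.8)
The plan is to reverse the construction of \S\ref{sec:curveconst}: given $C$, I would pull back the homogeneous coordinates along a parametrization and read off a Cayley structure from the linear factorizations of the resulting binary forms.

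For \emph{(1)}, let $\nu\colon \PP^1 \to C \subseteq \PP^{\#\A-1}$ be the normalization, which is birational onto $C$, so that $\nu^*\mathcal{O}(1) \cong \mathcal{O}_{\PP^1}(d)$. Set $g_u = \nu^* x_u$ for $u \in \tau$ (and $g_u = 0$ for $u \notin \tau$, since $C \subseteq X_\tau$). Each $g_u$ is a binary form of degree $d$; it is nonzero because $C$ meets the dense torus orbit, on which every $x_u$ is invertible, and the $g_u$ have no common zero because $\nu$ is a morphism on all of $\PP^1$. Let $f_0, \ldots, f_\ell$ be the distinct linear forms (up to scalar) occurring among the $g_u$, and define $\pi(u) \in \ZZ_{\geq 0}^{\ell+1}$ to be the exponent vector of $g_u$, so that $g_u = c_u \ff^{\pi(u)}$ for some $c_u \in \KK^*$ and $\ff = (f_0,\ldots,f_\ell)$. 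Since $\deg g_u = d$ we get $\pi(u) \in \Delta_\ell(d)$. After applying an automorphism of $\PP^1$ to move the roots of $f_0,f_1,f_2$ to the standard marked points of Definition \ref{defn:little} (only $f_0,f_1$ when $\ell=1$), the roots of the remaining $f_i$ record a point $\bp \in M_{0,\ell+1}$, and each $f_i$ becomes proportional to the $i$-th entry of $\ff(\bp)$. The leftover scalars are absorbed by the torus exactly as in Lemma \ref{lem:cayley-scalars}: the binomial relations of $X_\tau$ force the scalar vector $(\lambda_u)$ to lie in the dense orbit, hence $\lambda = t \cdot x_\tau$ for a unique $t \in T_\tau$, giving $C = t \cdot C_{\pi, \ff(\bp)}$.

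It remains to check $\pi$ is a Cayley structure. Basepoint-freeness is immediate: if some $i$ were a basepoint then $f_i \mid g_u$ for all $u$, contradicting that the $g_u$ have no common zero; conciseness holds because each $f_i$ was chosen to divide some $g_u$. The crux is \emph{affine-linearity}. Given any affine dependence $\sum_u a_u u = \sum_u b_u u$ with $\sum a_u = \sum b_u$ and $a_u, b_u \in \NN$, the binomial $\prod x_u^{a_u} = \prod x_u^{b_u}$ holds in $\KK[S_\tau]$; pulling back gives an honest equality $\prod g_u^{a_u} = \prod g_u^{b_u}$ of binary forms. Comparing the multiplicity of each irreducible factor $f_i$ and using unique factorization yields $\sum_u a_u \pi(u) = \sum_u b_u \pi(u)$. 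Since every integral affine relation on $\tau$ splits into such a pair, $\pi$ respects all affine dependencies and so extends to an affine-linear map, as required. For \emph{(2)}, all of this data is intrinsic to $C$ up to the two choices made: the $g_u$ are determined up to a common scalar and up to $\mathrm{Aut}(\PP^1)$, so by unique factorization the \emph{unordered} set of linear factors $\{f_0,\ldots,f_\ell\}$, equivalently the unordered set of marked points, is canonical. The only remaining freedom is the labeling of these $\ell+1$ forms, which simultaneously permutes the coordinates of $\pi$ and the marked points of $\bp$ by an element of $S_{\ell+1}$; this is exactly the asserted ambiguity, and $\ell+1$ itself is the number of distinct linear factors of $C$.

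For \emph{(3)}, after translating we may assume $C = C_{\pi,\ff(\bp)}$ with $\nu = \rho_{\pi,\ff(\bp)}$, and I would consider the homomorphism $\Stab_{T_\tau}(C) \to \mathrm{Aut}(\PP^1) = \mathrm{PGL}_2$ sending $t$ to the unique $\phi_t$ with $t \cdot \nu = \nu \circ \phi_t$; this $\phi_t$ exists and is unique because $\nu$ is birational. Since $t$ preserves $\partial X_\tau$, the automorphism $\phi_t$ permutes the marked points $\nu^{-1}(\partial X_\tau) = \{P_0,\ldots,P_\ell\}$ of Proposition \ref{prop:curve}(\ref{itemii}). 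When $\ell > 1$ there are at least three such points, so the subgroup of $\mathrm{PGL}_2$ preserving them is finite, whence the image of $\Stab_{T_\tau}(C)$ is finite. The kernel consists of $t$ fixing every point of $C$; as $C$ meets the dense orbit, on which $T_\tau$ acts simply transitively, the kernel is trivial, so $\Stab_{T_\tau}(C)$ is finite.

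The main obstacle is the affine-linearity step in \emph{(1)}: everything rests on translating the binomial relations cutting out $X_\tau$ into additive relations among the exponent vectors $\pi(u)$, which is exactly where unique factorization of binary forms and the semigroup structure of $S_\tau$ interact. The hypothesis $\ell > 1$ in \emph{(3)} is genuinely needed: with $\ell = 1$ there are only two marked points, the reparametrizations fixing them form a one-dimensional subgroup of $\mathrm{PGL}_2$, and (consistently with $C_\pi$ being independent of $\ff$) a positive-dimensional subtorus of $T_\tau$ stabilizes $C$.
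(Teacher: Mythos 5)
Your argument for parts (1) and (2) is essentially the paper's: pull back the homogeneous coordinates along the normalization, factor the resulting degree-$d$ binary forms, read off $\pi$ from the exponent vectors, verify affine-linearity by pulling back the binomial equations of $X_\tau$ and invoking unique factorization, and absorb the leftover scalars into a unique $t\in T_\tau$ via the argument of Lemma \ref{lem:cayley-scalars}; the paper locates the marked points as the preimage of $C\cap\partial X_\tau$ rather than as the roots of the distinct linear factors of the $g_u$, but by Proposition \ref{prop:curve} these are the same set, and your verification of basepoint-freeness and conciseness is the same in substance. For part (3) you take a genuinely different route. The paper argues contrapositively: an infinite stabilizer contains a one-parameter subgroup, which forces $C$ to be a translate of a toric (monomially parametrized) curve and hence $\ell=1$. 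You instead map $\Stab_{T_\tau}(C)$ into the subgroup of $\mathrm{PGL}_2$ permuting the $\ell+1\geq 3$ marked points, with trivial kernel because $T_\tau$ acts freely on the dense orbit that $C$ meets, and conclude finiteness directly. This is correct, and it is in fact exactly the mechanism the paper records immediately after the proposition in its discussion of stabilizers and uses in Proposition \ref{prop:stabilizer-ell-ge-1}; your version has the mild advantage of exhibiting the stabilizer as a subgroup of $S_{\ell+1}$ on the spot, while the paper's contrapositive is shorter if one only wants finiteness.
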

\noindent One may prove this proposition using Cox's notion of a $\Delta$-collection to describe maps from $\PP^1$ to $X_\tau$ in terms of the Cox ring of a resolution of $X_\tau$ \cite{functor}. This is the approach used in \cite{stablemaps} to describe genus zero log stable maps to a toric variety. Here, we will instead take a direct, more elementary approach.
\begin{proof}
	Consider the normalization $\PP^1 \to C$ of $C$. We consider the preimage $\{ P_0, \ldots, P_\ell\}$ of the finite, nonempty set $C \cap \partial X_\tau$ under this map. This defines $\ell$; after ordering the elements of this preimage, we obtain $\bp\in M_{0,\ell+1}$. We choose coordinates on $\PP^1$ so that $P_0 = (1 : 0) \in \PP^1$ and $P_i = (c_i : 1)$ for $i = 1, \ldots, \ell$. We put $\ff(\bp) = (y_1, y_0, y_0-y_1, \ldots, y_0 - c_\ell y_1)$ as in Definition \ref{defn:little}.

The normalization map $\PP^1 \to X_\tau$ has degree one and is thus given by a tuple of forms $(F_u \in \KK[y_0, y_1]_d : u \in \tau)$. We have $F_u \not\equiv 0$ for each $u$ since $C$ intersects the dense torus orbit of $X_\tau$. The factors of $F_u$ are all from $\ff(\bp)$ up to scalar, so we define $\pi : \tau \to \Delta_\ell(d)$ by
\[F_u = \lambda_u \ff^{\pi(u)}\]
for each $u$ and for some tuple of constants $\lambda = (\lambda_u \in \KK^* : u \in \tau)$. Our choice of $P_i$'s implies that $\pi$ is basepoint-free and concise (cf.~Definition \ref{defn: Cayley structure}). To see that $\pi$ is affine-linear, consider an affine relation $\sum a_u u = \sum b_u u$ for some coefficients $a_u, b_u \in \NN$. Then $X_\tau$ has the defining equation $\prod x_u^{a_u} = \prod x_u^{b_u}$. Since $C \subseteq X_\tau$, we may pull this back to $\PP^1$ to obtain
\[\prod (\lambda_u \ff^{\pi(u)})^{a_u} = \prod (\lambda_u \ff^{\pi(u)})^{b_u}.\]
Since the factors $\ff$ are all distinct, by uniqueness of factorization we have $\sum a_u \pi(u) = \sum b_u \pi(u)$. This shows $\pi$ is affine-linear, so $\pi$ is a Cayley structure. The argument from the proof of Lemma \ref{lem:cayley-scalars} then shows that there exists a unique $t \in T_\tau$ such that $\PP^1 \to C$ is exactly the map $t \cdot \rho_{\pi, \ff(\bp)}$.

This shows the existence of $\ell$, $\pi$, $\bp$, and $t$. 
For any $\ell'$, $\pi'$ of degree $d$, $\bp'$, and $t'$ satisfying $C=t'\cdot C_{\pi',\ff(\bp)}$, the map $t\cdot \rho_{\pi,\ff(\bp)}:\PP^1 \to C$ must also be the normalization of $C$. The uniqueness claims follow.

Consider now all $t\in T_\tau$ such that $t\cdot C=C$. If this is infinite, it contains a one-parameter subgroup, and $C$ is a torus translate of the orbit closure of this subgroup. In particular, $C$ is a torus translate of a \emph{toric} curve, and is thus parametrized by monomials. It follows that $\ell=1$.
\end{proof}

\subsection{Stabilizers}

For a rational curve $C \subseteq X_\tau$, any $t \in T_\tau$ such that $t \cdot C = C$ induces a permutation of the $\ell+1$ points of the normalization of $C$ lying over $C \cap \partial X_\tau$. If $\ell > 1$, this permutation determines $t$ as an automorphism of the curve, hence identifies the stabilizer of $C$ with a subgroup of $S_{\ell+1}$ via its action on $M_{0, \ell+1}$. 

\begin{rem}\label{rem:le1}
In the setting of Proposition \ref{prop:curvetocayley}, when $\ell=1$ the stabilizer in $T_\tau$ of the curve $C$ consists exactly of the one-dimensional torus $T_\pi$ (and $C$ is a translate of the closure of $T_\pi$).
\end{rem}

We briefly examine the stabilizers when $\ell > 1$. Note that the finite subgroups of $PGL_2(\KK)$ are cyclic, dihedral, or $A_4, S_4$ or $A_5$. The stabilizer of $C$ is moreover abelian since it is also a subgroup of $T_\tau$, so it is either cyclic or $\mu_2\times \mu_2$. In any case, the existence of a nontrivial stabilizer highly constrains the Cayley structure, the choice of marked points and the cycle type of the resulting permutation $\sigma$.

\begin{prop} \label{prop:stabilizer-ell-ge-1}
Let $\pi : \tau \to \Delta_\ell(d)$ be a Cayley structure, with $\ell > 1$. Let $\sigma \in S_{\ell+1}$ be a permutation. Then $\rho_\pi$ admits fibers $C_{\pi, \ff(\bp)}$ with stabilizers containing $\sigma$ if and only if the following holds:
\begin{enumerate}
	\item $\pi \circ \sigma = \pi$, and \label{stabi}
	\item the cycle type of $\sigma$ is $(1, 1, k, \ldots, k)$ for some $k$. \label{stabii}
\end{enumerate}
Without loss of generality assume $\sigma$ fixes $0$ and $1$ and let $\zeta_k$ denote a $k$-th root of unity. The corresponding fibers are given by putting $P_0 = (1:0)$, $P_1 = (0:1)$, and the remaining marked points in disjoint orbits each of the form \[\{(1:\zeta_k^i c)\ |\ 0 \leq i < k\}\] for some $c \in \KK^*$.
\end{prop}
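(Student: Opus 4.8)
The plan is to translate the stabilizer condition into the language of automorphisms of $\PP^1$. Suppose $t \in T_\tau$ stabilizes $C = C_{\pi, \ff(\bp)}$ and induces $\sigma$. Since $\rho := \rho_{\pi, \ff(\bp)}$ is the normalization of $C$ and $t\cdot\rho$ is a normalization of $t\cdot C = C$, there is a unique $\phi \in \mathrm{Aut}(\PP^1)$ with $t\cdot\rho = \rho\circ\phi$; by Proposition \ref{prop:curve} this $\phi$ permutes the marked points (the preimages of $\partial X_\tau$) according to $\sigma$, say $\phi(P_j) = P_{\sigma(j)}$. Conversely, producing such a pair $(t,\phi)$ for a suitable $\bp$ is exactly what the backward direction requires, so the whole proposition reduces to analyzing this pair.

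For the necessity of (\ref{stabi}), I would pull back the linear forms. Since $\phi^* f_i$ vanishes at $\phi^{-1}(P_i) = P_{\sigma^{-1}(i)}$, we have $\phi^* f_i = \mu_i f_{\sigma^{-1}(i)}$ for scalars $\mu_i \in \KK^*$. Substituting into $t\cdot\rho = \rho\circ\phi$ and writing $\pi^\sigma(u)_j := \pi(u)_{\sigma(j)}$, the $u$-th coordinate of the right-hand side becomes $\nu_u\,\ff^{\pi^\sigma(u)}(y)$ for a scalar $\nu_u = \bigl(\mu_{\sigma(j)}\bigr)_j^{\pi(u)}$, while the left-hand side is a scalar times $\ff^{\pi(u)}(y)$. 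Equality in projective space forces the ratio $\ff^{\pi(u)-\pi^\sigma(u)}$ to be independent of $u$ up to a constant; since the $f_j$ are distinct linear forms, unique factorization then gives that $w := \pi^\sigma(u) - \pi(u)$ is a constant vector. Basepoint-freeness finishes it: for each $j$, choosing $u$ with $\pi(u)_{\sigma(j)} = 0$ gives $w_j \le 0$, and choosing $u'$ with $\pi(u')_j = 0$ gives $w_j \ge 0$, so $w = 0$ and $\pi\circ\sigma = \pi$.

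For the necessity of (\ref{stabii}), recall that for $\ell > 1$ the stabilizer is finite (Proposition \ref{prop:curvetocayley}), so $\phi$ has finite order $k$ and is conjugate in $\mathrm{PGL}_2(\KK)$ to the rotation $y\mapsto \zeta_k y$; it has exactly two fixed points on $\PP^1$, and every other orbit has size exactly $k$. The key point, which I expect to be the main obstacle, is showing that \emph{both} fixed points of $\phi$ are marked points, so that $\sigma$ fixes exactly two indices. Here I would invoke that $T_\tau$ acts simply transitively on the dense orbit of $X_\tau$: if a fixed point $y^*$ of $\phi$ were not marked, then $\rho(y^*)$ would lie in the dense orbit and satisfy $t\cdot\rho(y^*) = \rho(\phi(y^*)) = \rho(y^*)$, forcing $t = e$; but $t \ne e$ since $\phi \ne \mathrm{id}$ (as $\sigma \ne \mathrm{id}$; the case $\sigma = \mathrm{id}$ is trivial with $k=1$). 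Thus the cycle type is $(1,1,k,\ldots,k)$. Relabelling so that $\sigma$ fixes $0$ and $1$ and placing these two fixed points at $P_0 = (1:0)$ and $P_1 = (0:1)$, the automorphism becomes the diagonal $\phi(y_0:y_1) = (y_0 : \zeta_k y_1)$, which exhibits the remaining marked points in orbits $\{(1:\zeta_k^i c)\}$, as asserted.

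Finally, for sufficiency I would run this construction in reverse. Assuming $\pi\circ\sigma = \pi$ and cycle type $(1,1,k,\ldots,k)$, I place $P_0 = (1:0)$, $P_1 = (0:1)$, and realize each $k$-cycle of $\sigma$ as a single $\zeta_k$-orbit $\{(1:\zeta_k^i c)\}$ for distinct $c \in \KK^*$; then $\phi(y_0:y_1) = (y_0:\zeta_k y_1)$ realizes $\sigma$ on the marked points. Computing $\rho\circ\phi$ exactly as above yields $\rho(\phi(y))_u = \nu_u\,\ff^{\pi^\sigma(u)}(y) = \nu_u\,\ff^{\pi(u)}(y)$, using $\pi^\sigma = \pi$, where $\nu_u = \bigl(\mu_{\sigma(j)}\bigr)_j^{\pi(u)}$ is a monomial character. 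By the affine-relation argument of Lemma \ref{lem:cayley-scalars}, the point $(\nu_u)_{u\in\tau}$ is the image of $x_\tau$ under a unique $t \in T_\tau$, whence $\rho\circ\phi = t\cdot\rho$ and $t$ stabilizes $C$ while inducing $\sigma$. The only real delicacies throughout are keeping straight the direction of $\sigma$ versus $\sigma^{-1}$ and confirming that the free action of $T_\tau$ on its dense orbit is available; modulo these, the unique-factorization computation and the torsor argument carry all the weight.
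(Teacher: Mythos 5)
Your proposal is correct and follows essentially the same route as the paper: the same reduction to a pair $(t,\phi)$ with $t\cdot\rho = \rho\circ\phi$, unique factorization of the pulled-back forms for claim (\ref{stabi}), the classification of finite-order elements of $\mathrm{PGL}_2$ together with freeness of the $T_\tau$-action on the dense orbit for claim (\ref{stabii}), and the explicit monomial-character computation (via the affine-relation argument of Lemma \ref{lem:cayley-scalars}) for sufficiency. The only difference is that you spell out the unique-factorization step for (\ref{stabi}) in more detail -- tracking the projective scalar ambiguity and using basepoint-freeness to kill the constant shift $w$ -- where the paper merely cites the analogous argument in Proposition \ref{prop:curvetocayley}.
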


\begin{cor}\label{cor:stab}
If $\ell > 1$, $C_{\pi, \ff}$ has trivial stabilizer for general $\ff$.
\end{cor}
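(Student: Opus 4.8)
The plan is to read this off from Proposition \ref{prop:stabilizer-ell-ge-1} together with a dimension count on $M_{0,\ell+1}$. First I would reduce to a statement about the marked-point configuration alone. By Lemma \ref{lem:cayley-scalars}, rescaling the entries of $\ff$ replaces $C_{\pi,\ff}$ by a $T_\tau$-translate $t\cdot C_{\pi,\ff}$; since $T_\tau$ is abelian, $\Stab_{T_\tau}(t\cdot C)=\Stab_{T_\tau}(C)$, so the stabilizer depends only on the underlying point configuration $\bp\in M_{0,\ell+1}$ determined by the roots of the $f_i$. Hence it suffices to show the stabilizer is trivial for general $\bp$.

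Now suppose $C_{\pi,\ff(\bp)}$ has a nontrivial stabilizer. Since $\ell>1$, the stabilizer injects into $S_{\ell+1}$ via its action on the marked points, so it contains a nontrivial $\sigma$. By Proposition \ref{prop:stabilizer-ell-ge-1}, such a $\sigma$ has cycle type $(1,1,k,\ldots,k)$ with $k\geq 2$, and $\bp$ must lie in the locus $S_\sigma$ of configurations whose two $\sigma$-fixed points can be normalized to $(1:0),(0:1)$ and whose remaining marked points occur in $\mu_k$-orbits $\{(1:\zeta_k^i c)\}$. The key step is to check that $S_\sigma$ is a proper closed subset. After fixing the two special points at $0,\infty$, the residual automorphisms of $\PP^1$ are the scalings $y\mapsto\lambda y$, and each of the $(\ell-1)/k$ orbits contributes a single parameter $c\in\KK^*$; modding out by the one-dimensional residual scaling gives $\dim S_\sigma=(\ell-1)/k-1$. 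Since $\dim M_{0,\ell+1}=\ell-2$ and $k\geq 2$, we get $(\ell-1)/k-1<\ell-2$, so $S_\sigma$ is proper. (For $\ell$ so small that no nontrivial $\sigma$ of the required cycle type exists---e.g.\ $\ell=2$---every fiber has trivial stabilizer and the claim is immediate.)

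Finally, there are only finitely many $\sigma\in S_{\ell+1}$, so $\bigcup_{\sigma\neq\mathrm{id}}S_\sigma$ is a proper closed subset of $M_{0,\ell+1}$; its complement is a dense open set on which the stabilizer is trivial, giving the result for general $\ff$. The only genuinely substantive points are the reduction to dependence on $\bp$ alone (so that ``general $\ff$'' is meaningful) and the observation that the symmetric loci $S_\sigma$ strictly drop dimension, which holds precisely because $k\geq 2$ for any nontrivial $\sigma$; everything else is bookkeeping with the proposition.
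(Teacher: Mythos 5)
Your proof is correct and follows the route the paper intends: the corollary is stated as an immediate consequence of Proposition \ref{prop:stabilizer-ell-ge-1}, and your argument supplies exactly the missing bookkeeping --- the reduction (via Lemma \ref{lem:cayley-scalars} and $PGL_2$-invariance of the image curve) to a statement about $\bp\in M_{0,\ell+1}$, and the count $\dim S_\sigma=(\ell-1)/k-1<\ell-2$ for $k\geq 2$ showing each symmetric locus is proper. No gaps; this matches the paper's (implicit) proof.
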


\begin{proof}[Proof of Proposition \ref{prop:stabilizer-ell-ge-1}]
Let $\rho_{\pi, \ff(\bp)} : \PP^1 \to C_{\pi,\ff(\bp)}$ be a rational curve using $\pi$. Let $t \in T_\tau$ be such that $t \cdot C_{\pi, \ff(\bp)} = C_{\pi, \ff(\bp)}$. There is a unique $\phi : \PP^1 \to \PP^1$ such that $t \cdot \rho_{\pi, \ff(\bp)} = \rho_{\pi, \ff(\bp)} \circ \phi$ as maps. As discussed above, $\phi$ induces a permutation $\sigma$ of $\{0, \ldots, \ell\}$ and likewise an automorphism of $\Delta_\ell(d)$.

Claim (\ref{stabi}) follows by examining factors of the forms $F_u$ as in the proof of  Lemma \ref{prop:curvetocayley}, using uniqueness of factorization.

For claim (\ref{stabii}), we use the fact that an order-$k$ automorphism of $\PP^1$ is conjugate to multiplication by $\zeta_k$. In particular, $\phi$ has two fixed points, which must lie over $C \cap \partial X_\tau$ since $T_\tau$ acts freely on the interior of $X_\tau$. That is, the fixed points of $\phi$ are among the marked points. All other orbits of $\phi$, hence of $\sigma$, are of size $k$ and of the stated form. This shows (\ref{stabii}) and the last claim.

Conversely, suppose (\ref{stabi})-(\ref{stabii}) hold and the marked points are chosen as described; we check that $\sigma$ arises from a torus element. We denote the marked points as $P_0 = (1 : 0), P_1 = (0: 1)$ and $P_{ij} = (\zeta_k^i c_j : 1)$ for some choices of $c_j$ and for $0 \leq i < k$. Note that
\[\prod_{0 \leq i < k} (y_0 - \zeta_k^i c_j y_1) = y_0^k - c_j^k y_1^k.\]
By abuse of notation, we write $\pi(u)_{*j}$ for the common value of $\pi(u)_{ij}$ for all $i$. Then
\begin{align*}
F_u(y_0, y_1) = y_1^{\pi(u)_0} y_0^{\pi(u)_1} \prod_{i,j} (y_0 - \zeta_k^i c_j y_1)^{\pi(u)_{ij}}
= y_1^{\pi(u)_0} y_0^{\pi(u)_1} \prod_j (y_0^k - c_j^k y_1^k)^{\pi(u)_{*j}}.
\end{align*}
Applying the substitution $y_0 \mapsto \zeta_k y_0$, we find
\[
F_u(\zeta_k y_0, y_1) = \zeta_k^{\pi(u)_1} F_u(y_0, y_1).
\]
In particular, setting $t_u := \zeta_k^{\pi(u)_1}$, the tuple $(t_u\ |\ u \in \tau)$ satisfies the defining relations of $X_\tau$, since $\pi$ does, hence corresponds to an element $t \in T_\tau$. We have $t \cdot \rho_{\pi, \ff} = \rho_{\pi, \ff} \circ \phi$, as required.
\end{proof}

\begin{rem}
	A stabilizer isomorphic to $\mu_2\times \mu_2$ can be represented by the automorphisms $z \mapsto z, -z, 1/z, -1/z$ of $\PP^1$. For such $C_{\pi, \ff}$, the non-free orbits $\{0, \infty\}$, $\{1, -1\}$ and $\{i, -i\}$ must all be among the marked points, since $T_\tau$ acts freely on the interior of $X_\tau$. Any additional marked points must come in disjoint $4$-tuples of the form $\{c, -c, 1/c, -1/c\}$ for various $c$. The Cayley structure itself must also satisfy strong constraints: for indices $j,k$ corresponding to any of the pairs $\{0, \infty\}$, $\{1, -1\}$ and $\{i, -i\}$ we must have $\pi(u)_j = \pi(u)_k$ for all $u$. The remaining marked points come in $4$-tuples $j,k,l,m$ such that  $\pi(u)_j = \pi(u)_k = \pi(u)_l=\pi(u)_m$ for all $u$.

\end{rem}

Later we will need to distinguish between $T_\tau$ and the torus acting on $C_{\pi, \ff}$ when the stabilizer is nontrivial.

\begin{defn}\label{defn:stablattice}
	Let $\pi : \tau \to \Delta_\ell(d)$ be a Cayley structure and $\ff$ a choice of forms. Let $\Stab_{\pi, \ff} \subset T_\tau$ be the corresponding stabilizer and $\overline T_{\pi, \ff} := T_\tau / \Stab_{\pi, \ff}$ the quotient torus. We denote the cocharacter lattice of $\overline T_{\pi, \ff}$ by $N_{\pi,\ff}$. 
\end{defn}
We have a natural map
\[
N_\tau \to N_{\pi, \ff}.
\]
When $\ell=1$, this map is surjective with kernel $\Hom({\GG}_m, T_\pi) \cong \ZZ$, and $N_{\pi,\ff}$ may be identified with the quotient of $N_\tau$ by the image of $\pi^*$.

When $\ell > 1$, this map is an inclusion of lattices with index $k=|\Stab_{\pi, \ff}|$. 
The map is dual to an inclusion of lattices 
\[
	M_{\pi,\ff} \to M_{\tau}=\langle \tau \rangle.
\]
When $\Stab_{\pi, \ff}$ is cyclic, $M_{\pi,\ff}$ is the kernel
of the map
\begin{align*}
M_\tau&\to \ZZ/k\ZZ\\
u-v&\mapsto \pi(u)_1-\pi(v)_1
\end{align*}
where we have ordered coordinates such as in the statement of Proposition \ref{prop:stabilizer-ell-ge-1}. In the case of $\mu_2\times \mu_2$ stabilizer, we obtain $M_\tau$ by intersecting the kernels of two such maps.

\section{Nodes and Cusps}\label{sec:nodesandcusps}
\subsection{Primitive and Smooth Cayley Structures}
In this section, we seek to clarify when a general rational curve determined by a degree $d$ Cayley structure has degree $d$ and is smooth. 
\begin{defn}[Primitive Cayley structures]\label{defn:primitive}
A Cayley structure $\pi:\tau\to\Delta_\ell(d)$ is \emph{imprimitive} if either 
	\begin{enumerate}
		\item $\dim \pi(\tau)=1$ and $\ell>1$; or
		\item $\ell=1$ and $\langle \pi(\tau) \rangle \subset m\cdot \ZZ^2$ for some $m>1$.
	\end{enumerate}
	Note that $\dim \pi(\tau) = 1$ in either case. We say that $\pi$ is \emph{primitive} if it is not imprimitive.
\end{defn}
Consider an imprimitive Cayley structure $\pi$ of degree $d$ with image $\B$. Let $d'$ be the lattice length of the convex hull of $\B$ with respect to the one-dimensional lattice $\langle \B \rangle$. Then up to transposition of $e_0,e_1$, there is a unique affine-linear inclusion $\B\to \Delta_1(d')$. The \emph{reduction} of $\pi$ is the composition
\[
\Red(\pi) : \tau\to \B\to \Delta_1(d').
\]
It is straightforward to see that this is a primitive Cayley structure, and is well-defined up to equivalence.
The \emph{multiplicity} of the imprimitive Cayley structure $\pi$ is the ratio $d/d'$.
If $\pi$ is already primitive, we define it to be its own reduction and to have multiplicity one.

\begin{thm}\label{thm:primitive}
	Let $\pi$ be a degree $d$ Cayley structure. Then $\deg C_{\pi,\ff} = d$ for general choice of $\ff$ if and only if $\pi$ is primitive. If $\pi$ is imprimitive, $C_{\pi,\ff}=C_{\Red(\pi)}$ and $\deg C_{\pi,\ff}$ is the degree of $\Red(\pi)$.
\end{thm}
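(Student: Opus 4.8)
The plan is to compute $\deg C_{\pi,\ff}$ as $d/e$, where $e$ denotes the degree of $\rho_{\pi,\ff}$ onto its image. Since $\pi$ is basepoint-free, Proposition \ref{prop:curve} shows $\rho_{\pi,\ff}$ is a morphism given by forms of degree $d$ with no common linear factor, so $\rho_{\pi,\ff}^*\mathcal{O}(1)=\mathcal{O}_{\PP^1}(d)$ and the projection formula gives $d=e\cdot\deg C_{\pi,\ff}$. Thus the theorem reduces to showing that $e=1$ exactly when $\pi$ is primitive, while $e$ equals the multiplicity $d/d'$ when $\pi$ is imprimitive. Replacing $\pi$ by the inclusion $\iota\colon\B\hookrightarrow\Delta_\ell(d)$ of its image (so $C_{\pi,\ff}=C_{\B,\ff}$), I identify $e$ with the degree onto its image of the map $t\mapsto(\ff^v(t))_{v\in\langle\B\rangle}$ into the torus of $X_\B$. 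The governing criterion is that $t,t'$ have the same image precisely when the vector $(f_i(t')/f_i(t))_i$ annihilates $\langle\B\rangle$, i.e.\ lies in $\Hom(\ZZ^{\ell+1}/\langle\B\rangle,\KK^*)$.

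For the imprimitive case we have $\dim\B=1$, so $\langle\B\rangle=\ZZ w$ for a generator $w$ and the map factors through the single rational function $\ff^w$. I first show $w$ has full support: conciseness forces each coordinate of $\B$ to be positive somewhere and basepoint-freeness forces each to vanish somewhere, but a nonnegative affine function along the segment $\conv\B$ can vanish only at an endpoint, so $w_i\neq 0$ for all $i$. Writing $d'$ for the lattice length of $\conv\B$ and $w_\pm$ for the two endpoints, the coordinates with $w_i>0$ vanish at $w_-$ (hence $(w_+)_i=d'w_i$) and those with $w_i<0$ vanish at $w_+$; summing gives $d=\sum_i(w_+)_i=d'\sum_{w_i>0}w_i=d'\deg\ff^w$, so $\deg\ff^w=d/d'$. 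The parametrization then factors as a degree $d/d'$ map $\ff^w\colon\PP^1\to\PP^1$ followed by the length-one monomial parametrization of $C_{\Red(\pi)}$ (whose exponent set has gcd one, hence is birational onto its image). This yields $C_{\pi,\ff}=C_{\Red(\pi)}$ and $e=d/d'$, so $\deg C_{\pi,\ff}=d'=\deg\Red(\pi)$; the length-one imprimitive case, where $w=(m,-m)$ and $\deg\ff^w=m=d/d'$, is the same computation.

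For primitive $\pi$ I must show $e=1$. If $\ell=1$ and $\langle\B\rangle$ is saturated then $w=\pm(e_0-e_1)$, so $\ff^w=f_0/f_1$ is a M\"obius transformation and $e=1$ at once. Otherwise $\dim\B\geq 2$; picking affinely independent $w^{(0)},w^{(1)},w^{(2)}\in\B$ and setting $a=w^{(1)}-w^{(0)}$, $b=w^{(2)}-w^{(0)}$, the pair $(\ff^a,\ff^b)$ is a coordinate projection of $\rho_{\B,\ff}$, so $e$ divides the degree of $(\ff^a,\ff^b)$ onto its image, which in turn divides $\gcd(\deg\ff^a,\deg\ff^b)$. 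When these degrees happen to be coprime we are done immediately, but (for instance when every element of $\langle\B\rangle$ has even $L^1$-norm) they need not be, so the crux is to show that $(\ff^a,\ff^b)$ is birational onto its image for general $\ff$.

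I expect this genericity statement to be the main obstacle, and I would prove it by a dimension count on the incidence variety $W=\{(\ff,t,t'):t\neq t',\ \ff^a(t)=\ff^a(t'),\ \ff^b(t)=\ff^b(t')\}$ over the space $F=(\PP^1)^{\ell+1}$ of root configurations, noting that both defining conditions are scale-invariant. Projecting $W$ to the off-diagonal in $(\PP^1)^2$, each equation is a nontrivial codimension-one condition on $F$, and the two are independent for general $(t,t')$: differentiating $\log\bigl(\ff^a(t)/\ff^a(t')\bigr)$ and $\log\bigl(\ff^b(t)/\ff^b(t')\bigr)$ in two roots $\rho_i,\rho_j$ produces a $2\times 2$ Jacobian whose determinant is proportional to $a_ib_j-a_jb_i$, nonzero for some $i,j$ because $a,b$ are independent. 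Hence the general fiber of this projection has codimension two, so $\dim W=\ell+1=\dim F$; consequently no component of $W$ can dominate $F$ with positive-dimensional fibers, since such a component would have dimension exceeding $\dim F$. Thus for general $\ff$ the coincidence locus $W_\ff$ is finite, which forces $(\ff^a,\ff^b)$, and therefore $\rho_{\B,\ff}$, to be birational onto its image, giving $e=1$. The good locus is the complement of the images of the finitely many non-dominant components, hence contains a general tuple of forms with distinct roots, completing the primitive case.
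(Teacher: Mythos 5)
Your proposal is correct, but it takes a genuinely different route from the paper's. The paper realizes $\rho_{\B,\ff}$ as the projection of the rational normal curve $C_d\subseteq\PP^d$ from a center $L$, and reduces the theorem to showing that $L$ meets only finitely many secant lines; this is split into three lemmas: the imprimitive case (Lemma \ref{lemma:primitive}, via the projective equivalence of $X_\B$ with $X_{\B'}$ plus a combinatorial bound $d'<d$), marked secants (Lemma \ref{lemma:marked}, which rules out ``half-marked'' secants), and unmarked secants (Lemma \ref{lemma:unmarked}, a dimension count in $M_{0,\ell+3}$ using the full system of equations $z^v=1$ for $v\in\langle\B\rangle$, hence codimension $\dim\B$, with a case split $\dim\B\geq 3$ versus $\dim\B=2$ versus $\dim\B=1$). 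You instead use the projection formula $d=e\cdot\deg C_{\pi,\ff}$; in the imprimitive case you compute $e=d/d'$ by factoring the parametrization through $\ff^w$ (your full-support observation for the generator $w$ is the key point, and it also gives $e\geq 2$, hence the strict inequality $\deg C_{\pi,\ff}<d$ needed for the ``only if'' direction); in the primitive case you reduce to birationality of the two-function map $(\ff^a,\ff^b)$ and prove it by a Jacobian dimension count over the root-configuration space $(\PP^1)^{\ell+1}$. Your argument is more elementary (no secant-variety geometry), avoids the case analysis on $\dim\B$, and treats marked and unmarked coincidences uniformly in one incidence variety $W$, whereas the paper must handle secants through the $P_i$ separately. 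What the paper's heavier machinery buys is reuse: the equations $z^v=1$ for the \emph{whole} lattice $\langle\B\rangle$ and the marked-secant lemma are precisely what drive the later nodal/cuspidal classification (Theorem \ref{thm:smooth}), which your two-element count cannot recover.

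Two points in your dimension count deserve tightening, though both are fixable with ingredients you already state. First, the inference ``the general fiber has codimension two, so $\dim W=\dim F$'' only controls components of $W$ dominating the $(t,t')$-factor; for components lying over a proper subvariety of $(\PP^1)^2\setminus\Delta$ you must invoke your separate observation that each single equation is a nontrivial condition on $F$ for \emph{every} fixed $(t,t')$, giving fibers of dimension at most $\ell$ there and hence components of dimension at most $\ell+1$ as well. Second, your logarithmic Jacobian is only defined where no root $\rho_i$ equals $t$ or $t'$, so one must also rule out divisorial components of a fiber hiding inside the loci $\{\rho_i=t\}$ or $\{\rho_i=t'\}$; restricting the cleared-denominator equations to these loci shows they do not vanish identically, so no such components exist and the count goes through.
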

\noindent We complete the proof of this theorem in \S\ref{sec:nodes}.
\begin{ex}
	Continuing Example \ref{ex:cayley}, recall that a concision of the restriction of $\pi''$ to $\tau'$ was the length $\ell=3$ degree two Cayley structure sending $(0,-1,0)$ to $e_2+e_3$ and $(0,0,1)$ to $e_0+e_1$. Since this Cayley structure has one-dimensional image but $\ell>1$, we see that it is imprimitive. A reduction of this Cayley structure is the map taking $(0,-1,0)$ to $e_0$ and $(0,0,1)$ to $e_1$, which has degree one. The original imprimitive Cayley structure had multiplicity $2$.

	By Theorem \ref{thm:primitive}, we see that the curves coming from the restriction of $\pi''$ to $\tau'$ are actually just lines, with the parametrizing map having degree $2$.
\end{ex}

We now let $\pi:\tau\to\Delta_\ell(d)$ be a primitive Cayley structure.
\begin{defn}[Cuspidal Cayley structures]\label{defn:cuspidal}
	The Cayley structure $\pi$  is \emph{cuspidal} if there exists $i\in\{0,\ldots,\ell\}$ and $v \in \pi(\tau)$ such that $v_i = 0$ and $v'_i > 1$ for all $v' \ne v$.

\end{defn}
For a tuple of forms $\ff$ with roots $P_0,\ldots,P_\ell$, this condition says $\ff^v(P_i) \ne 0$, and that $\ff^{v'}$ vanishes to order at least two at $P_i$ for all $v' \ne v$. 

\begin{defn}\label{defn:nodal}
The Cayley structure $\pi$  is \emph{nodal} if either:
\begin{enumerate}
\item there exists $0\leq i< j \leq \ell$ and $v \in \pi(\tau)$ such that $v_i = v_j = 0$, and $v'_i, v'_j > 0$ for all $v' \ne v$.

\item $\dim \pi(\tau)=2$, $e_i-e_j\notin \langle \pi(\tau)\rangle$ for all $i\neq j$, and up to permutation of the coordinates, $\langle \pi(\tau)\rangle$ is not one of the exceptional lattices listed in Table \ref{table:secant}.
\end{enumerate}
\end{defn}
For a tuple of forms $\ff$, the first condition of the definition says $\ff^v(P_i), \ff^v(P_j) \ne 0$, and that $\ff^{v'}(P_i) = \ff^{v'}(P_j) = 0$ for all $v' \ne v$. The second condition of the definition and the lattices of Table \ref{table:secant} are discussed in \S\ref{sec:nodes2}, see in particular Theorem \ref{thm:ranktwo}.

\begin{table} Lattices generated by:
	\begin{enumerate}
	\item $(1,1,-1,-1,0,0),(1,1,0,0,-1,-1)\in \ZZ^6$
	\item $(2,-1,-1,0,0),(2,0,0,-1,-1)\in\ZZ^5$
	\item $(2,-2,0,0),(2,0,-1,-1)\in \ZZ^4$
	\item $(2,-2,0),(2,0,-2)\in \ZZ^3$
	\item $(2,-2,0),(2,-1,-1)\in \ZZ^3$
\end{enumerate}
	
\caption{Exceptional lattices generated by images of non-nodal Cayley structures. See Definition \ref{defn:nodal}.}\label{table:secant}
\end{table}
\begin{defn}[Smooth Cayley structures]\label{defn:smooth}
	The Cayley structure $\pi$  is \emph{smooth} if it is neither nodal nor cuspidal.
\end{defn}
\noindent See Figure \ref{fig:cuspidal} for examples of the images of imprimitive, cuspidal, and nodal Cayley structures.

\begin{figure}
	\begin{tabular}{c@\qquad c@\qquad c@\qquad c}
		\begin{tikzpicture}[scale=.5]
			\draw[gray] (0,0) -- (0,3);
			\draw[gray] (0,0) -- (3,0);
			\draw[gray] (3,0) -- (0,3);
	\draw[] (0,0) circle [radius=0.1];
	\draw[] (1,0) circle [radius=0.1];
	\draw[] (2,0) circle [radius=0.1];
	\draw[] (3,0) circle [radius=0.1];
	\draw[] (0,1) circle [radius=0.1];
	\draw[] (1,1) circle [radius=0.1];
	\draw[] (2,1) circle [radius=0.1];
	\draw[] (0,2) circle [radius=0.1];
	\draw[] (1,2) circle [radius=0.1];
	\draw[] (0,3) circle [radius=0.1];
	\draw[fill] (0,0) circle [radius=0.1];
	\draw[fill] (1,2) circle [radius=0.1];
\end{tikzpicture}
		&
		\begin{tikzpicture}[scale=.5]
			\draw[gray] (0,0) -- (0,3);
			\draw[gray] (0,0) -- (3,0);
			\draw[gray] (3,0) -- (0,3);
	\draw[] (0,0) circle [radius=0.1];
	\draw[] (1,0) circle [radius=0.1];
	\draw[] (2,0) circle [radius=0.1];
	\draw[] (3,0) circle [radius=0.1];
	\draw[] (0,1) circle [radius=0.1];
	\draw[] (1,1) circle [radius=0.1];
	\draw[] (2,1) circle [radius=0.1];
	\draw[] (0,2) circle [radius=0.1];
	\draw[] (1,2) circle [radius=0.1];
	\draw[] (0,3) circle [radius=0.1];
	\draw[fill] (0,0) circle [radius=0.1];
	\draw[fill] (1,2) circle [radius=0.1];
	\draw[fill] (1,0) circle [radius=0.1];
\end{tikzpicture}
&
\begin{tikzpicture}[scale=.5]
			\draw[gray] (0,0) -- (0,3);
			\draw[gray] (0,0) -- (3,0);
			\draw[gray] (3,0) -- (0,3);
	\draw[] (0,0) circle [radius=0.1];
	\draw[] (1,0) circle [radius=0.1];
	\draw[] (2,0) circle [radius=0.1];
	\draw[] (3,0) circle [radius=0.1];
	\draw[] (0,1) circle [radius=0.1];
	\draw[] (1,1) circle [radius=0.1];
	\draw[] (2,1) circle [radius=0.1];
	\draw[] (0,2) circle [radius=0.1];
	\draw[] (1,2) circle [radius=0.1];
	\draw[] (0,3) circle [radius=0.1];
	\draw[fill] (0,0) circle [radius=0.1];
	\draw[fill] (1,2) circle [radius=0.1];
	\draw[fill] (2,1) circle [radius=0.1];
\end{tikzpicture}
&
\begin{tikzpicture}[scale=.5]
			\draw[gray] (0,0) -- (0,3);
			\draw[gray] (0,0) -- (3,0);
			\draw[gray] (3,0) -- (0,3);
	\draw[] (0,0) circle [radius=0.1];
	\draw[] (1,0) circle [radius=0.1];
	\draw[] (2,0) circle [radius=0.1];
	\draw[] (3,0) circle [radius=0.1];
	\draw[] (0,1) circle [radius=0.1];
	\draw[] (1,1) circle [radius=0.1];
	\draw[] (2,1) circle [radius=0.1];
	\draw[] (0,2) circle [radius=0.1];
	\draw[] (1,2) circle [radius=0.1];
	\draw[] (0,3) circle [radius=0.1];
	\draw[fill] (0,0) circle [radius=0.1];
	\draw[fill] (3,0) circle [radius=0.1];
	\draw[fill] (0,3) circle [radius=0.1];
\end{tikzpicture}
\\
\\
Imprimitive & Cuspidal & Nodal & Nodal
\end{tabular}
\caption{Example images of imprimitive, cuspidal, and nodal Cayley structures with $d=3$, $\ell=2$ (Definitions \ref{defn:primitive}, \ref{defn:cuspidal}, \ref{defn:nodal}).}\label{fig:cuspidal}
\end{figure}
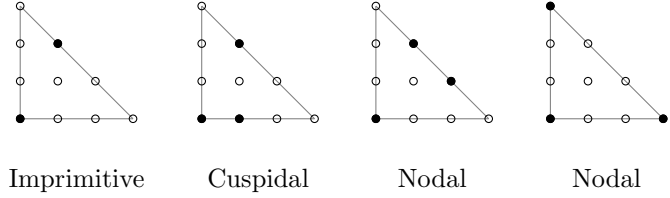

Any singular point of a curve with a single branch we call \emph{cuspidal}. We call singular points with multiple branches \emph{nodal}.
\begin{rem}
The definition of cuspidal Cayley structure says $C_{\pi, \ff}$ has a cusp at the image of $P_i$ (for some $i$). Likewise, the first case of the definition of nodal Cayley structure says $C_{\pi, \ff}$ has a node at the common image of $P_i$ and $P_j$ for some $i \ne j$. We will call these \emph{marked} cusps and nodes. Only in the second type of nodal Cayley structure is the singularity unmarked.
\end{rem}
\begin{thm}\label{thm:smooth}
Let $\pi$ be a primitive Cayley structure. For general choice of $\ff$,
\begin{enumerate}
	\item the curve $C_{\pi,\ff}$ has a cuspidal singularity if and only if $\pi$ is cuspidal;
	\item the curve $C_{\pi,\ff}$ has a nodal singularity if and only if $\pi$ is nodal;
	\item the curve $C_{\pi,\ff}$ is smooth if and only if $\pi$ is smooth.
\end{enumerate}
\end{thm}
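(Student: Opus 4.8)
The plan is to analyze the singularities of $C_{\pi,\ff}$ directly through its normalization map $\rho_{\pi,\ff} : \PP^1 \to C_{\pi,\ff}$, which by Proposition \ref{prop:curve} is a basepoint-free morphism sending the roots $P_0,\ldots,P_\ell$ of the $f_i$ precisely to the boundary $\partial X_\tau$ and the remaining points into the dense torus. Since $T_\tau$ acts freely on the interior of $X_\tau$, the map $\rho_{\pi,\ff}$ is injective on $\PP^1 \setminus \{P_0,\ldots,P_\ell\}$ and is an immersion there for generic $\ff$; hence every singularity of $C_{\pi,\ff}$ must be accounted for either (a) at a boundary image point $\rho_{\pi,\ff}(P_i)$, where the map may fail to be an immersion (producing a cusp), or (b) where two marked points $P_i, P_j$ have the same image (producing a node), or (c) at an interior singularity coming from a secant or self-intersection forced by the lattice $\langle\pi(\tau)\rangle$. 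The three parts of the theorem then correspond exactly to ruling out each source of singularity, so I would prove (1) and (2) as the two ``if and only if'' statements and obtain (3) formally, since \emph{smooth} is defined as neither nodal nor cuspidal and a curve is singular precisely when it has a cusp or a node.

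\textbf{Cusps (part 1).} Passing to the concise model $C_{\B,\ff}$ (an isomorphic linear projection, so the singularity type is unchanged), I would compute the image of the derivative of $\rho_{\B,\ff}$ at each $P_i$ in local coordinates. Writing each coordinate as $\ff^{v}$ for $v \in \B \subseteq \Delta_\ell(d)$, the order of vanishing at $P_i$ of the coordinate indexed by $v$ is exactly $v_i$. The map fails to be an immersion at $P_i$ exactly when the lowest-order terms span only a line, i.e.\ when there is a unique $v$ minimizing $v_i$ (so $v_i = 0$ by basepoint-freeness) while every other $v'$ has $v'_i \geq 2$; this is precisely the cuspidal condition $v_i = 0$, $v'_i > 1$ for $v' \neq v$. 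When the cuspidal condition fails, there are at least two coordinates vanishing to order $0$ and $1$ respectively, so the differential is injective and $P_i$ gives a smooth point. The ``general $\ff$'' hypothesis enters to ensure no unexpected coincidences among leading coefficients.

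\textbf{Nodes (part 2) --- the main obstacle.} The first type of nodal structure is the direct analogue of the cusp computation: two marked points $P_i, P_j$ map to the same boundary point exactly when a single $v$ has $v_i = v_j = 0$ while all other $v'$ have $v'_i, v'_j > 0$, forcing both images into the same torus-orbit stratum and gluing them. The genuinely hard part is the \emph{unmarked} nodes of the second type, where $\dim\pi(\tau) = 2$ and the node arises from an interior secant of the projective curve rather than from the marked points. Here I expect to reduce to the classical question of when a degree-$d$ monomial parametrization of a plane (or surface) curve attached to the lattice $\langle\pi(\tau)\rangle$ has a secant line through two interior points. The condition $e_i - e_j \notin \langle\pi(\tau)\rangle$ rules out the ``marked'' degeneration, and the finite list in Table \ref{table:secant} should be exactly the sublattices of rank two for which the generic such parametrization happens to be injective (i.e.\ the secant variety degenerates); establishing that this list is complete is the combinatorial heart of the argument and will require a careful case analysis of rank-two sublattices of $\ZZ^{\ell+1}$ not containing any $e_i - e_j$, checking for each whether a general member of the family admits a self-intersection. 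Once parts (1) and (2) are established, part (3) follows immediately from Definition \ref{defn:smooth} together with the fact that, for general $\ff$ and primitive $\pi$, every singular point of $C_{\pi,\ff}$ is either a cusp or a node (no worse singularities occur generically, which I would justify by the genericity of $\ff$ making all self-intersections transverse and all non-immersion points ordinary cusps).
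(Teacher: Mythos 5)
Your overall decomposition of the singularities into marked cusps, marked nodes, and unmarked (interior) singularities matches the paper's strategy, and your local computations for the marked cases are essentially sound (though note two slips: the claim that $\rho_{\pi,\ff}$ is injective on $\PP^1\setminus\{P_0,\ldots,P_\ell\}$ because $T_\tau$ acts freely on the interior is false --- two unmarked points can map to the same torus point, which is precisely the second type of node --- and in the cusp analysis the negation of the cuspidal condition can also occur because \emph{two} distinct $v,v'$ have $v_i=v'_i=0$, a case your ``order $0$ and order $1$'' dichotomy does not cover, though it is handled by a similar genericity-of-coefficients argument).

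The genuine gap is that you have deferred rather than proved the hard part. Establishing that Table \ref{table:secant} is the complete list of exceptional rank-two lattices is where the paper spends nearly all of its effort on this theorem, and ``a careful case analysis of rank-two sublattices of $\ZZ^{\ell+1}$ not containing any $e_i-e_j$'' is not a priori a finite problem: $\ell$ and $d$ are unbounded, so there are infinitely many candidate lattices $\langle\B\rangle$. The paper makes this tractable by a chain of reductions (Lemmas \ref{lemma:orbit}--\ref{lemma:conic}): the existence of an unmarked secant for general $\ff$ is recast as density of a $GL(2)$-orbit of a quasitorus $Y$ in $(\KK^*)^{\ell+1}$; a differential/determinant computation combined with a binomial-ideal argument forces $\langle\B\rangle$ to contain relations of the form $e_\alpha-e_\beta+e_\gamma-e_\delta$; one then shows $\langle\B\rangle$ is generated by two such short relations, hence equals $\langle\B'\rangle$ for $\B'$ the image of a Cayley structure of degree at most three with $\#\B'=3$; finally, since a rational plane cubic is necessarily singular while a plane conic is smooth, the dichotomy between Table \ref{table:secant} and everything else follows (Theorem \ref{thm:ranktwo}). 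You would also need the analogue of Lemma \ref{lemma:unmarkedcusp} (no tangent line at an \emph{unmarked} point meets the center of projection, proved by a dimension count in $M_{0,\ell+2}$ using that $\langle\B\rangle$ has rank at least two for primitive $\pi$) to justify your assertion that the map is an immersion away from the marked points; as written this is asserted without argument. Until the rank-two classification and the unmarked-cusp exclusion are supplied, the proof is incomplete.
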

\noindent We will complete the proof of this theorem in \S \ref{sec:nodes} and \S\ref{sec:nodes2}.

\begin{rem} \label{rem:image-of-cayley-structure}
We will say a subset $\B \subseteq \Delta_\ell(d)$ is primitive (resp. cuspidal, nodal, smooth) if the inclusion $\B \hookrightarrow \Delta_\ell(d)$ is. For a general Cayley structure $\pi$, each of these properties depends only on the image $\B = \pi(\tau)$, so $\pi$ is primitive (resp. cuspidal, nodal, smooth) if and only if $\B$ is. Indeed, as noted in \S \ref{sec:curveconst}, $C_{\B, \ff}$ is in general an isomorphic linear projection of $C_{\pi, \ff}$.
\end{rem}

\subsection{Setup and Projections}
We describe our approach to proving Theorems \ref{thm:primitive} and \ref{thm:smooth}.
Let $\B\subset \Delta_\ell(d)$ be the image of a Cayley structure $\pi$. 
We may simply assume $\pi$ is the inclusion $\mathcal{B} \hookrightarrow \Delta_\ell(d)$, following Remark \ref{rem:image-of-cayley-structure}.

We denote by $C_d$ the rational normal curve in $\PP^d$. The map $\rho_{\B,\ff}$ arises as the composition of $\PP^1\to C_d\subseteq \PP^d$ with a linear projection $\PP^d\dashrightarrow \PP^{\#\B-1}$. The center $L$ of this projection is the projectivization of the orthogonal complement of 
\[
	\langle \ff^v\ :\ v\in \B\rangle \subseteq \KK[y_0,y_1]_d.
\]
Since the projection is basepoint-free, $L$ does not intersect $C_d$.

Since $\Char \KK=0$, $\rho_{\B,\ff}$ is generically unramified, so $\deg C_{\B,\ff}=d$ if and only if  $\rho_{\B,\ff}$ is birational. This is equivalent to $L$ intersecting only finitely many secant or tangent lines of $C_d$. When this holds, $C_{\B,\ff}$ has a nodal singularity if and only if $L$ intersects a secant line of $C_d$. Likewise, $C_{\B,\ff}$ has a cuspidal singularity if and only if $L$ intersects a tangent line of $C_d$. Thus, in the following we will be analyzing the intersection behaviour of $L$ with secants and tangents of $C_d$ for generic choice of $\ff$.

First we will deal with the case of imprimitive Cayley structures.
\begin{lemma}\label{lemma:primitive}
	If $\pi$ is imprimitive, then for any $\ff$ with distinct roots, $C_{\pi,\ff}=C_{\Red(\pi)}$. Moreover, $\deg C_{\B,\ff}<d$.
\end{lemma}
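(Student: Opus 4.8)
The plan is to reduce, as in the section setup, to the case where $\pi$ is the inclusion $\B\hookrightarrow\Delta_\ell(d)$, and then to exhibit $\rho_{\pi,\ff}$ as a composition $\Phi'\circ\psi$ where $\psi:\PP^1\to\PP^1$ is a self-map of degree $\geq 2$. Since $\pi$ is imprimitive, $\B$ is one-dimensional, so $\langle\B\rangle$ has rank one; I would fix a generator $w$ of it and a base point $v_0\in\B$ so that $\pi(u)=v_0+k_u w$ with $\min_u k_u=0$ and $d'=\max_u k_u$. Writing $w$ via its positive and negative parts, set $g=\prod_{w_i>0}f_i^{w_i}$ and $h=\prod_{w_i<0}f_i^{-w_i}$, two forms of the common degree $e:=\sum_{w_i>0}w_i$. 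Because the entries of $\ff$ have distinct roots and the index sets $\{i:w_i>0\}$ and $\{i:w_i<0\}$ are disjoint, $g$ and $h$ are coprime, so $\psi:y\mapsto(g(y):h(y))$ is a genuine morphism of degree $e$. A direct computation (clearing the common factor $\ff^{v_0}$ and rescaling by a power of $h$) shows that $\ff^{\pi(u)}$ agrees projectively with $g(y)^{k_u}h(y)^{d'-k_u}$, giving $\rho_{\pi,\ff}=\Phi'\circ\psi$, where $\Phi'$ is the length-one parametrization $(s:t)\mapsto(s^{k_u}t^{d'-k_u})_u$, i.e. $\Phi'=\rho_{\Red(\pi)}$ with image $C_{\Red(\pi)}$.

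For the first claim I would argue that, since $g$ and $h$ are nonconstant and coprime, $\psi$ is a nonconstant morphism of smooth projective curves and hence surjective. Therefore the closure of the image of $\rho_{\pi,\ff}$ equals the closure of $\Phi'(\psi(\PP^1))=\Phi'(\PP^1)$, which is $C_{\Red(\pi)}$. This yields $C_{\pi,\ff}=C_{\Red(\pi)}$ for every $\ff$ with distinct roots.

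For the degree claim I would use that $\pi$ is basepoint-free, so the forms $\ff^{\pi(u)}$, of degree $d$, have no common factor; thus $\deg C_{\B,\ff}\cdot\deg\rho_{\pi,\ff}=d$, where $\deg\rho_{\pi,\ff}$ is the degree of the map onto its image. Since $\rho_{\pi,\ff}=\Phi'\circ\psi$ and generic fiber cardinalities multiply, $\deg\rho_{\pi,\ff}\geq\deg\psi=e$, whence $\deg C_{\B,\ff}\leq d/e$. Everything then comes down to proving $e\geq 2$, and this combinatorial step is the part I expect to be the real obstacle. In case (2) of imprimitivity ($\ell=1$) it is immediate: $\langle\B\rangle=e\cdot\ZZ(1,-1)$ with $m\mid e$ and $m>1$, so $e\geq 2$. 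In case (1) ($\ell>1$) I would argue by contradiction: if $e=1$ then $w=e_i-e_j$ for some $i\neq j$, so every coordinate other than $i,j$ is constant along $\B$; basepoint-freeness forces that constant to be $0$, while conciseness forces it to be positive somewhere — impossible unless there are no such coordinates, i.e. $\ell=1$, contradicting $\ell>1$. Hence $e\geq 2$ and $\deg C_{\B,\ff}\leq d/e<d$. The only delicate points beyond this are making sure $g,h$ are genuinely coprime (so that $\psi$ is a morphism and surjective) and that no common factor is silently introduced when rewriting the parametrization; both follow from the distinct-roots hypothesis and basepoint-freeness, so I expect the argument to go through without further difficulty.
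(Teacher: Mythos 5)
Your proof is correct, but it takes a genuinely different route from the paper's, so let me compare. The paper argues as follows: since $\dim\B=1$, the image curve $C_{\B,\ff}$ is all of the toric curve $X_\B$; the degree of a one-dimensional projective toric variety equals the normalized lattice length $d'$ of $\conv\B$; and $d'<d$ is then checked combinatorially (for $\ell>1$, an endpoint $v$ of $\conv\B$ has some coordinate $0<v_0<d$, which forces the other endpoint to vanish in that coordinate and bounds the number of lattice points of $v+\langle\B\rangle$ in $\Delta_\ell(d)$ by $v_0+1$). You instead make the multiple-cover structure explicit, factoring $\rho_{\pi,\ff}=\Phi'\circ\psi$ through the degree-$e$ self-map $\psi=(g:h)$ of $\PP^1$, and combine $\deg C_{\B,\ff}\cdot\deg\rho_{\pi,\ff}=d$ with $e\geq 2$. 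The two computations are equivalent ($d=e\cdot d'$, since $\Phi'$ is birational onto its image: the differences of the exponents $k_u$ are coprime because $w$ generates $\langle\B\rangle$), but yours buys a few things: it avoids the degree-equals-normalized-volume input, it proves the first assertion $C_{\pi,\ff}=C_{\Red(\pi)}$ by an explicit identity rather than the paper's bare appeal to the construction, and your combinatorial step (if $e=1$ then $w=e_i-e_j$, so all other coordinates are constant on $\B$, which is incompatible with basepoint-freeness together with conciseness unless $\ell=1$) is arguably cleaner than the paper's endpoint argument. The only details worth spelling out in a final write-up are that the identity $\rho_{\pi,\ff}=\Phi'\circ\psi$, verified where the rational function $h^{d'}/\ff^{v_0}$ is a unit, extends to all of $\PP^1$ since both sides are morphisms ($\Phi'$ is basepoint-free because $\min_u k_u=0$ and $\max_u k_u=d'$), and that the two imprimitivity cases both feed into the single inequality $e\geq 2$ exactly as you indicate.
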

\begin{proof}
	In both cases in the definition of imprimitive, $\dim \B=1$, so the curve $C_{\B,\ff}$ is just the toric variety $X_\B$.
Let $d'$ be the length of $\conv \B$ with respect to the lattice $\langle \B \rangle$, and let $\B'$ be the image of $\B$ in $\Delta_1(d')$ as in the definition of the reduction. The variety $X_\B$ is projectively equivalent to $X_\B'$, which has degree $d'$.
It follows from construction that $C_{\pi,\ff}=C_{\Red(\pi)}$.

If $\ell=1$ and $\B \subset m\cdot \ZZ^2$, then clearly $d' \leq d/m$.
Suppose instead that $\ell>1$. 
Let $v,w\in\ZZ^{\ell+1}$ be the endpoints of the convex hull of $\B$. Since $\pi$ is a Cayley structure and $\ell>1$, there must be some index $i$ such that either $0<v_i<d$ or $0<w_i<d$. Without loss of generality, assume that $0<v_0<d$. Since $\pi$ is a Cayley structure and $w$ is the other endpoint of $\conv \B$, $w$ must have the form $(0, w_1, \ldots, w_\ell)$. Then $v+\langle \B \rangle$ intersects $\Delta_\ell(d)$ in at most $v_0+1$ lattice points, and we conclude that $d'\leq v_0<d$.
\end{proof}
\subsection{Cusps}
In this section, we assume that $\pi$ is primitive. Let $\ff$ be general, with corresponding roots $P_0,\ldots,P_\ell \in \PP^1$. We identify these points with their images on the rational normal curve $C_d$.
\begin{lemma}\label{lemma:markedcusp}
The center of projection $L$ meets the tangent line \[T_{P_i} C_d\] for some $i = 0, \ldots, \ell$ if and only if $\B$ is cuspidal.
\end{lemma}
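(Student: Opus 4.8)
The plan is to translate the geometric statement into a question about $1$-jets of the forms $\ff^v$ at $P_i$, and then read off the cuspidal condition combinatorially. Write $V = \langle \ff^v : v \in \B\rangle \subseteq \KK[y_0,y_1]_d$, and for a point $P\in\PP^1$ let $W_{\geq 2,P}\subseteq \KK[y_0,y_1]_d$ denote the codimension-two subspace of forms vanishing to order at least two at $P$. The point is that $L$ is cut out by the functionals dual to $V$, while the tangent line $T_P C_d$ is the projectivized span of the evaluation and first-derivative functionals at $P$, which is exactly the annihilator of $W_{\geq 2,P}$. Intersecting these two linear subspaces and taking annihilators gives the criterion
\[
L \cap T_{P_i} C_d \ne \emptyset \iff V + W_{\geq 2,P_i} \ne \KK[y_0,y_1]_d.
\]
Since $\KK[y_0,y_1]_d/W_{\geq 2,P_i}$ is two-dimensional, with quotient map the $1$-jet $g\mapsto (g(P_i),g'(P_i))$ (in a local coordinate at $P_i$), this says that $L$ meets $T_{P_i}C_d$ if and only if the collection of $1$-jets $\{(\ff^v(P_i),(\ff^v)'(P_i)) : v\in\B\}$ fails to span $\KK^2$.

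Next I would compute these jets. Because the $f_j$ have distinct roots, only $f_i$ vanishes at $P_i$, and to order one, so $\mathrm{ord}_{P_i}\ff^v = v_i$. Hence the $1$-jet of $\ff^v$ at $P_i$ is: zero when $v_i\geq 2$; of the form $(0,\ast)$ with $\ast\neq 0$ when $v_i = 1$; and of the form $(\ast,\ast)$ with nonzero first coordinate when $v_i = 0$. Since $\B$ is basepoint-free, some $v$ has $v_i = 0$, so the jets always span at least a line. They span all of $\KK^2$ unless the only nonzero jet class comes from a \emph{unique} $v$ with $v_i = 0$, i.e.\ unless exactly one $v\in\B$ has $v_i = 0$ and every other $v'\in\B$ has $v'_i \geq 2$. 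This last condition is precisely cuspidality at the index $i$, and taking the union over $i$ gives the claimed equivalence with $\B$ being cuspidal.

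It remains to verify that in the two non-cuspidal cases permitted by basepoint-freeness the jets really do span for general $\ff$. If exactly one point has $v_i = 0$ and some point has $v'_i = 1$, the jets $(\ast,\ast)$ (nonzero first coordinate) and $(0,\ast')$ (zero first, nonzero second) are independent for \emph{every} admissible $\ff$. The delicate case is two points $v\neq w$ with $v_i = w_i = 0$: their jets are independent iff their first derivatives differ. A short local computation shows that, after normalizing by the (nonzero) values, the difference of derivatives equals $\sum_{j\neq i}(v_j - w_j)\,f_j'(P_i)/f_j(P_i)$, a nonzero linear expression in the quantities $f_j'(P_i)/f_j(P_i)$ — nonzero precisely because $v-w$ is supported off index $i$ and is not the zero vector. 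Thus it is nonvanishing for general $\ff$, as needed.

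I expect this genericity step to be the only real work; everything else is either purely combinatorial or holds for all $\ff$ with distinct roots. In particular, the ``if'' direction needs no genericity at all: when $\B$ is cuspidal there is structurally a single nonzero jet at the cuspidal index, so $L\cap T_{P_i}C_d \neq\emptyset$ for every admissible choice of $\ff$. The forward direction then follows by taking $\ff$ general enough to avoid the finitely many spanning-failure loci over the indices $i$ that are not cuspidal.
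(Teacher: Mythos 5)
Your proposal is correct and is essentially the paper's argument in slightly more invariant language: identifying $T_{P_i}C_d$ with the annihilator of the forms vanishing to order $\geq 2$ at $P_i$, and testing whether the $1$-jets of the $\ff^v$ span, is exactly the paper's computation in dual monomial coordinates at $P_i=(1:0)$, where the value and derivative become the coefficients of $y_0^d$ and $y_0^{d-1}y_1$. Your genericity step (the difference of logarithmic derivatives $\sum_{j\neq i}(v_j-w_j)f_j'(P_i)/f_j(P_i)$ being a nonzero linear form in the roots) coincides with the paper's condition $\sum_j a_j(v_j-v_j')\neq 0$, and your observation that the ``if'' direction requires no genericity is a correct minor sharpening.
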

\begin{proof}
After changing coordinates on $\PP^1$ and rescaling the $f_j$, we may assume without loss of generality that $P_i=(1:0)$, $f_i=y_1$,  and $f_j=y_0-a_jy_1$ for all $j\neq i$. We give $\PP^d$ the dual coordinates to $\{y_0^d, y_0^{d-1}y_1, \ldots, y_1^d\}$, so the tangent line
$T_{P_i} C_d$ is the span of $(1:0:\ldots:0)$ and $(0:1:0:\ldots:0)$. The center of projection $L$ meets $T_{P_i} C_d$ at the point $(\lambda:1:0 :\ldots:0)$ for some $\lambda \in \KK^*$ if and only if for every $v\in \B$, 
\[
	\ff^v=y_1^{v_i}\prod_{j\neq i} (y_0-a_jy_1)^{v_j}
\]
is orthogonal to $(\lambda,1,0,\ldots,0)$.
This is equivalent to requiring that for every $v\in \B$, $v_i\neq 1$, and if $v_i=0$, we have 
\[
\sum_j a_jv_j=\lambda.
\]
If two distinct elements $v, v'$ have $v_i = 0$, then for $L$ to meet $T_{P_i}C_d$ we must have
\[
\sum_j a_j(v_j-v_j')=0.
\]
This is impossible if $\ff$ is general, so if $L$ meets $T_{P_i}C_d$, we see (since $\pi$ is a Cayley structure) that there is a unique $v$ such that $v_i = 0$. Thus $\B$ is cuspidal.

Conversely, if $\B$ is cuspidal, then for some index $i$ and for every $v\in \B$, $v_i\neq 1$, and there is a unique $v$ such that $v_i = 0$. Since $\ff$ is general, for that $v$ we have $\sum_j a_jv_j\neq 0$ and so $L$ intersects $T_{P_i}$ at a point other than $P_i$.
\end{proof}

\begin{lemma}\label{lemma:unmarkedcusp}
The center of projection $L$ does not intersect any $T_{Q} C_d$ for $Q\notin \{P_0,\ldots,P_\ell\}$.
\end{lemma}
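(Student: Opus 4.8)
We want to show that for general $\ff$, the center of projection $L$ meets no tangent line $T_Q C_d$ for $Q \notin \{P_0, \ldots, P_\ell\}$. The plan is to set up a dimension count for the incidence variety of centers $L$ that happen to meet a tangent line at a \emph{moving} point $Q$, and show this locus is proper (lower-dimensional) in the relevant parameter space. The key difference from Lemma \ref{lemma:markedcusp} is that there $Q$ was forced to be one of the finitely many marked points $P_i$, so the condition was a finite set of linear equations; here $Q$ varies in a one-parameter family, which at first glance could enlarge the locus where $L$ meets some tangent.

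\textbf{Recasting the condition.}
First I would fix coordinates on $\PP^1$ so that $Q = (1 : q)$ with $q$ a free parameter (avoiding the finitely many values corresponding to the $P_i$). As in Lemma \ref{lemma:markedcusp}, $L$ is the projectivization of the orthogonal complement of $\langle \ff^v : v \in \B\rangle$, and the tangent line $T_Q C_d$ is the span of the point of $C_d$ at $Q$ and its derivative. The condition that $L$ meets $T_Q C_d$ is that $\langle \ff^v : v \in \B \rangle$, viewed inside $\KK[y_0,y_1]_d$, fails to be transverse to $T_Q C_d$, equivalently that there is a nonzero linear functional vanishing on all $\ff^v$ and nonorthogonal to $T_Q C_d$ only in a one-dimensional way. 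Concretely, $L$ meets $T_Q C_d$ precisely when every $\ff^v$ lies in the hyperplane of forms with a \emph{double} root at $Q$ after adjusting by a single common linear relation — the cleanest formulation is that all $\ff^v$ vanish to order $\geq d-1$ at $Q$ in the sense dual to the tangent line, i.e.\ the evaluation map $g \mapsto (g(Q), g'(Q))$ restricted to $\langle \ff^v \rangle$ drops rank.

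\textbf{The dimension count.}
The heart of the argument is to show that for general $\ff$, no value of $q$ produces such a coincidence. I would consider the incidence correspondence
\[
\Gamma = \{(\ff, q) : L_\ff \text{ meets } T_{Q(q)} C_d\} \subseteq (\text{space of }\ff) \times \Aff^1,
\]
and project to the $\ff$-factor. For each \emph{fixed} $q \notin \{P_0, \ldots, P_\ell\}$, the condition on $\ff$ is closed and nontrivial — by essentially the computation of Lemma \ref{lemma:markedcusp} (now with $Q$ an interior point rather than a root of some $f_i$), meeting $T_Q C_d$ forces at least two distinct $v, v' \in \B$ to satisfy one linear relation among the $a_j$ and $q$, which is a proper closed condition on $\ff$ since $\dim\B \geq 1$ and $\B$ has at least two points. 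Thus each fiber of $\Gamma \to \Aff^1$ over a fixed $q$ has codimension $\geq 1$ in the $\ff$-space; since $\Aff^1$ is one-dimensional, $\Gamma$ itself has codimension $\geq 1 - 1 = 0$, so I need a \emph{strict} inequality to conclude the image in $\ff$-space is proper. The main obstacle is precisely this: I expect the per-$q$ condition to actually have codimension $\geq 2$ in $\ff$-space (two independent linear conditions, since meeting a tangent line is a tangency and hence a codimension-two incidence on the relevant Grassmannian), so that $\Gamma$ has codimension $\geq 1$ and its image under projection to the general $\ff$ is a proper subset. Verifying that the tangency condition genuinely imposes two conditions — rather than one — is the crux, and I would pin it down by analyzing the rank drop of the $2 \times \#\B$ evaluation-and-derivative matrix at $Q$, using primitivity of $\B$ to rule out the degenerate configurations (the same way primitivity enters Lemma \ref{lemma:markedcusp} to force a \emph{unique} $v$ with $v_i = 0$).
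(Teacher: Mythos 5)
Your overall strategy --- an incidence correspondence over the moving point $Q$ followed by a dimension count under the forgetful projection --- is exactly the paper's approach (the paper works in $M_{0,\ell+2}$ and forgets $Q$). But you stop at the crux and your proposed route to it is not the right mechanism. You write that the per-$q$ condition should have codimension $\geq 2$ ``since meeting a tangent line is a tangency and hence a codimension-two incidence on the relevant Grassmannian.'' That is not where the $2$ comes from. Writing out the rank-drop of the map $g \mapsto (g(Q), g'(Q))$ on $\langle \ff^v\rangle$: since $Q$ is not a root of any $f_i$, every $\ff^v(Q)\neq 0$, so the condition is that all logarithmic derivatives $\sum_j v_j\, f_j'(Q)/f_j(Q)$ agree as $v$ ranges over $\B$, i.e.\ $\sum_j w_j\, z_j = 0$ for every $w \in \langle \B\rangle$, where $z_j = f_j'(Q)/f_j(Q)$ records the position of $P_j$. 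This is a system of \emph{linear} equations in the $z_j$ whose codimension is exactly $\rank\langle\B\rangle$, and the needed inequality $\rank\langle\B\rangle \geq 2$ is precisely what primitivity gives when $\ell>1$ (Definition \ref{defn:primitive}: a primitive Cayley structure with $\ell>1$ has $\dim\pi(\tau)\geq 2$). If $\rank\langle\B\rangle$ were $1$, the condition would be a single equation, your count would give codimension $0$, and the lemma would genuinely fail --- so the statement cannot be deduced from a generic-tangency heuristic; it requires identifying the codimension with the lattice rank.

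A second gap: the case $\ell=1$ is not covered by your dimension count at all (there $\rank\langle\B\rangle$ may equal $1$ and the count collapses), and it needs a separate argument --- the paper disposes of it by noting that $\rho_{\B,\ff}$ is then a monomial map, so by torus-equivariance the only points where $L$ can meet a tangent are the two fixed points $P_0, P_1$. Neither issue is fatal to your plan, but as written the proof is incomplete precisely at the step you yourself flag as the crux.
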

\begin{proof}
If $\ell=1$, the map $\rho_{\B,\ff}$ is a monomial map, and the only points of intersection with tangents can be $P_0$ and $P_1$.
We now assume $\ell>1$. Since we assume $\pi$ is primitive,  $\langle \B \rangle$ has rank at least two.

	Consider the moduli space	 $M_{0,\ell+2}$ of $\ell+2$ marked points $P_0,\ldots,P_\ell,Q$ on $\PP^1$.
	Let $V$ be the locus inside $M_{0,\ell+2}$ of those $P_0,\ldots,P_\ell,Q$ such that $L$ intersects the tangent line of $C_d$ at $Q$. Note that
	the center of projection $L$ depends on the points $P_0,\ldots,P_\ell$.
	Consider the map 
	\[
		\phi:\M_{0,\ell+2}\to \M_{0,\ell+1}
	\]
obtained by forgetting $Q$. We wish to show $\phi(V) \ne \M_{0,\ell+1}$, that is, for general $\ff$, the center of projection $L$ does not intersect any $T_{Q} C_d$ for $Q\notin \{P_0,\ldots,P_\ell\}$. It is enough to show
\[\dim \phi(V)<\ell-2=\dim \M_{0,\ell+1}.\]

Fix coordinates on $\PP^1$ so that $Q=(0:1)$ and $P_i=(1:z_i)$ with $z_0=0$ and $z_1=1$. Using a computation similar to in the proof of Lemma \ref{lemma:markedcusp}, it is straightforward to show that $V$ is cut out by the equations
\[
z\cdot v=0\qquad v\in \langle \B\rangle.
\]
In particular, the codimension of $V$ is the rank of $\langle \B \rangle$, which is at least two. Hence
$\dim V\leq \ell+2-3-2=\ell-3$, so $\dim \phi(V)<\ell-2$ as required.
\end{proof}

\subsection{Nodes}\label{sec:nodes}
We continue under the assumption that $\pi$ is primitive and $\ff$ is general.
\begin{lemma}\label{lemma:marked}
The center of projection $L$ meets a secant line through $P_i$ and $P_j$ if and only if 
there exists $v \in \pi(\tau)$ such that $v_i = v_j = 0$, and $v'_i, v'_j > 0$ for all $v' \ne v$.

Moreover, if $L$ meets a secant line passing through $P_i$, then that secant line must also pass through $P_j$ for some $j\neq i$.
\end{lemma}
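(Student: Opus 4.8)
The plan is to directly compute the condition under which the center of projection $L$ meets a secant line of $C_d$ through two of the marked points, following the coordinate-based strategy already used in the proofs of Lemmas \ref{lemma:markedcusp} and \ref{lemma:unmarkedcusp}. Fix indices $i \ne j$. After changing coordinates on $\PP^1$ and rescaling, I would arrange that $P_i = (1:0)$ and $P_j = (0:1)$, so that $f_i = y_1$, $f_j = y_0$, and $f_k = y_0 - a_k y_1$ for $k \ne i,j$. Using the dual coordinates on $\PP^d$ relative to the monomial basis $\{y_0^d, \ldots, y_1^d\}$, the secant line through $P_i$ and $P_j$ is the span of the two coordinate points corresponding to $y_0^d$ and $y_1^d$, i.e.\ the points $(1:0:\cdots:0)$ and $(0:\cdots:0:1)$. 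A general point of this secant line is $(\mu : 0 : \cdots : 0 : \nu)$.

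\emph{Next I would translate the incidence condition.} The center $L$ meets this secant line precisely when there is a point $(\mu:0:\cdots:0:\nu)$ on the line that is orthogonal to every $\ff^v$ for $v \in \B$. Writing out $\ff^v = y_1^{v_i} y_0^{v_j} \prod_{k \ne i,j} (y_0 - a_k y_1)^{v_k}$, the coefficient of $y_0^d$ is nonzero exactly when $v_i = 0$, and the coefficient of $y_1^d$ is nonzero exactly when $v_j = 0$. Thus the orthogonality condition $\mu \cdot (\text{coeff of } y_0^d) + \nu \cdot (\text{coeff of } y_1^d) = 0$ is automatically satisfied (for any $\mu,\nu$) when both $v_i > 0$ and $v_j > 0$; and it becomes a genuine linear constraint exactly for those $v$ with $v_i = 0$ or $v_j = 0$. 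Arguing as in Lemma \ref{lemma:markedcusp}: if two distinct elements $v, v'$ both have $v_i = 0$ (or both have $v_j = 0$), the resulting equations force a nontrivial relation among the generic $a_k$, which is impossible. Hence for $L$ to meet the secant there can be at most one $v$ with $v_i = 0$ and at most one with $v_j = 0$; conciseness forces exactly one of each, and a short check (using that $\ff$ is general, so the relevant coefficients are nonzero) shows these must coincide in the single element $v$ with $v_i = v_j = 0$, with all other $v' \ne v$ satisfying $v'_i, v'_j > 0$. This yields the stated ``if and only if.''

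\emph{For the final sentence}, suppose $L$ meets a secant line $\overline{PQ}$ of $C_d$ passing through $P_i$ but through no other marked point. Then $Q \notin \{P_0, \ldots, P_\ell\}$, and I would show this forces an additional unmarked tangency or secant that a generic $\ff$ avoids. The cleanest route is a dimension count parallel to Lemma \ref{lemma:unmarkedcusp}: consider $\M_{0,\ell+2}$ with marked points $P_0, \ldots, P_\ell, Q$, let $V$ be the locus where $L$ meets the secant through $P_i$ and $Q$, and show via the orthogonality equations (now involving the coordinate of $Q$) that $V$ is cut out by linear conditions indexed by $\langle \B \rangle$, so that $\codim V \geq \rank \langle \B \rangle \geq 2$ when $\pi$ is primitive with $\ell > 1$. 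Forgetting $Q$ then gives $\dim \phi(V) < \dim \M_{0,\ell+1}$, so a generic choice avoids such secants; the case $\ell = 1$ is handled as before since $\rho_{\B,\ff}$ is then monomial and meets only the coordinate secant. Thus a secant through $P_i$ met by $L$ must pass through a second marked point $P_j$.

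\emph{The main obstacle} I anticipate is the bookkeeping in the ``if and only if'' when disentangling the two constraints coming from $v_i = 0$ versus $v_j = 0$: one must rule out the degenerate possibility that $L$ meets the secant because of a single vanishing coordinate on one side only (which would not produce a genuine node through both $P_i$ and $P_j$), and confirm that genericity of the $a_k$ collapses everything to the single shared vertex $v$ with $v_i = v_j = 0$. Care is also needed in the final dimension count to ensure the equations defining $V$ genuinely have rank equal to $\rank \langle \B\rangle$ rather than dropping rank on the relevant stratum.
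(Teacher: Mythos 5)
Your ``if and only if'' argument is essentially the paper's: the same choice of coordinates, the same orthogonality conditions on a point $(\mu:0:\cdots:0:\nu)$ of the secant, and the same use of genericity of the $a_k$ to rule out two distinct elements of $\B$ with both the $i$th and $j$th coordinates zero. The genuine divergence is in the ``Moreover'' claim, and in the order of the two claims. The paper proves the ``Moreover'' statement \emph{first}, by a direct argument that needs no moduli space and no genericity: with $P_i=(1:0)$, $Q=(0:1)$, any $v\in\B$ with $v_i>0$ (which exists by conciseness) has $y_1\mid\ff^v$, so the $y_0^d$-coefficient of $\ff^v$ vanishes; since the endpoints of the secant lie on $C_d$ and $L\cap C_d=\emptyset$, orthogonality then forces the $y_1^d$-coefficient to vanish as well, i.e.\ $y_0\mid\ff^v$, so $Q$ is a root of some $f_k$ and hence a marked point. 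This holds for \emph{every} $\ff$ with distinct roots, and it also yields the dichotomy ($v_i>0$ if and only if $v_j>0$, for all $v$) that the paper then uses to streamline the first claim. Your dimension count in $\M_{0,\ell+2}$ only gives the generic statement, which does suffice here since the section's standing hypothesis is that $\ff$ is general; but it is more roundabout than necessary, and its anticipated structure is not what materializes. If you write out the orthogonality equations for a secant through $P_i$ and an unmarked $Q$, you will find that the locus $V$ is \emph{empty}, not of codimension $\rank\langle\B\rangle$: conciseness gives some $v$ with $v_i>0$, whose equation forces $\nu=0$, and basepoint-freeness gives some $v$ with $v_i=0$, whose equation then forces $\mu=0$, because the $y_1^d$-coefficient of $\ff^v$ can never vanish when $Q$ avoids the roots of all the $f_k$. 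So no rank estimate (and no appeal to primitivity) is needed for this part at all.

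Two corrections to your bookkeeping in the first claim, both fixable. First, your assertion that two distinct elements $v,v'$ with $v_i=0$ always force a nontrivial relation among the generic $a_k$ is only true when both also have $v_j=0$; if both have $v_j>0$, each imposes only the condition $\mu=0$ and no relation among the $a_k$ appears. In that sub-case the contradiction comes from a different mechanism: basepoint-freeness at $j$ supplies an element with $j$th coordinate zero, whose equation combined with $\mu=0$ forces $\nu=0$, so $L$ misses the secant outright. (This is exactly the ``single vanishing coordinate on one side only'' degeneration you flagged as the main obstacle, and this is how it resolves.) Second, the existence of elements with $v_i=0$ and with $v_j=0$ is basepoint-freeness, not conciseness; conciseness is what supplies elements with $v_i>0$.
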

\begin{proof}
If $\ell=1$, then the map $\rho_{\B,\ff}$ is a monomial map, and $L$ cannot intersect the secant line through $P_0$ and $P_1$. Likewise, the condition in the statement of the lemma cannot be fulfilled. Thus, we may assume going forward that $\ell>1$.

We first show that if $L$ meets a secant line passing through $P_i$ and some point $Q\in \PP^1$, then $Q = P_j$ for some $j$. Fix coordinates on $\PP^1$ so that $Q=(0:1)$ and $P_i=(1:0)$. Then the secant line through $P_i$ and $Q$ is the span of $(1:0:\cdots:0)$ and $(0:\cdots:0:1)$. Let $v\in \B$ be any element with $v_i>0$. Then $y_1$ divides $\ff^v$. The secant line must contain a non-zero vector orthogonal to $\ff^v$, so the coefficients of $y_0^d$ and $y_1^d$ in $\ff^v$ are proportional, hence both $0$. Then $y_0$ divides $\ff^v$ as well, and so $Q = P_j$ for some $j\neq i$.
We also see that, for any $v\in \B$, if $v_i>0$, then $v_j>0$ (and vice versa).

We now show the first claim of the lemma. Suppose that for all $v \in \B$, $v_i > 0$ if and only if $v_j > 0$. Let $\B_{ij} = \{v \in \B : v_i = v_j = 0\}$.
As above, we fix coordinates on $\PP^1$ so that $P_i=(1:0)$, $P_j=(0:1)$, and write $P_k=(z_k:1)$ for $k\neq i,j$ (where we fix $z_k=1$ for some $k\neq i,j$).

Consider the moduli space $M_{0,\ell+1}$ of $\ell+1$ marked points $P_0,\ldots,P_\ell$ on $\PP^1$. Let $V$ be the locus inside $M_{0,\ell+1}$ of those $P_0,\ldots,P_\ell$ such that $L$ intersects the secant line through $P_i$ and $P_j$. As above, we determine when $V \ne M_{0, \ell+1}$ by dimension counting. It is straightforward to see that $V$ is cut out by the equations
\begin{equation*}
z^v=1\qquad \forall v\in \langle \B_{ij} \rangle. 
\end{equation*}
This locus has codimension equal to the rank of $\langle \B_{ij} \rangle$. 
We conclude that for general $\ff$, $L$ intersects the secant through $P_i$ and $P_j$ if and only if $|\B_{ij}| = 1$.
\end{proof}

We next analyze the condition that $L$ meets an \emph{unmarked} secant line, i.e.~a secant line through points $Q_0,Q_1$ both distinct from $P_0,\ldots,P_\ell$.
We work over the moduli space $M_{0,\ell+3}$. Let $V \subseteq M_{0,\ell+3}$ be the locus of $\ell + 3$ marked points $P_0,\ldots,P_\ell,Q_0,Q_1 \in \mathbb{P}^1$ such that $L$ intersects the secant line through $Q_0$ and $Q_1$.
We choose coordinates on $\PP^1$ so that $Q_0=(1:0)$ and $Q_1=(1:0)$. Write $P_i=(z_i:1)$ for $i=0,\ldots,\ell$.
It is straightforward to check that $V$ is cut out by 
\begin{equation}\label{eqn:V}
z^v=1\qquad v\in \langle \B\rangle .
\end{equation}

\begin{lemma}\label{lemma:unmarked}
	If $\dim \B \geq 3$, $\dim \B=1$, or $e_i - e_j \in \langle \B \rangle$ for some $i\neq j$, then $L$ does not intersect any unmarked secant lines. If $\dim \B=2$, then the center of projection $L$ only intersects finitely many unmarked secant lines.
\end{lemma}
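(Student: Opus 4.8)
The plan is to run a case analysis governed by $\dim V$, reading everything off the defining equations \eqref{eqn:V}. First I record the numerology: $\dim M_{0,\ell+3}=\ell$, $\dim M_{0,\ell+1}=\ell-2$, and $\phi$ forgets $Q_0,Q_1$, so $\phi(V)$ is exactly the locus of configurations $\ff$ for which $L$ meets an unmarked secant. Since every $v\in\langle\B\rangle$ has coordinate-sum zero, each monomial $z^v$ is invariant under the residual automorphisms $z_i\mapsto\lambda z_i$ fixing $Q_0$ and $Q_1$; hence the equations \eqref{eqn:V} descend to $M_{0,\ell+3}$ and cut out a subscheme of codimension $\rank\langle\B\rangle=\dim\B$. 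Therefore $\dim V\leq\ell-\dim\B$.

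Three of the four cases then fall out quickly. If $e_i-e_j\in\langle\B\rangle$ for some $i\neq j$, the equation $z^{e_i-e_j}=z_i/z_j=1$ forces $P_i=P_j$; since the marked points of $M_{0,\ell+3}$ are distinct, $V=\emptyset$ and $L$ meets no unmarked secant. If $\dim\B\geq 3$ then $\dim V\leq\ell-3<\dim M_{0,\ell+1}$, so $\phi|_V$ is not dominant and a general $\ff$ avoids $\phi(V)$. If $\dim\B=1$ then by primitivity (Definition \ref{defn:primitive}) we must have $\ell=1$, so $\rho_{\B,\ff}$ is a monomial map as in the proof of Lemma \ref{lemma:marked}; its image is a toric curve, whose singularities occur only over the torus-fixed points $P_0,P_1$, and so again no unmarked secant is met.

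The remaining case $\dim\B=2$ is the crux, and the main obstacle is that the dimension count is now borderline: $\dim V\leq\ell-2=\dim M_{0,\ell+1}$. The key point to appreciate is that one should \emph{not} attempt to prove the fiber of $\phi|_V$ finite for every $\ff$: for special $\ff$ the incidence locus can acquire a positive-dimensional component, i.e. a pencil of unmarked secants. It suffices — and is all that is claimed — to establish finiteness for general $\ff$, which follows from the theorem on fiber dimension. Concretely, decompose $V$ into its finitely many irreducible components $V'$. For each $V'$ with $\dim V'<\ell-2$, or with $\dim V'=\ell-2$ but $\phi|_{V'}$ not dominant, the image omits a dense open subset of $M_{0,\ell+1}$; for each $V'$ with $\dim V'=\ell-2$ and $\phi|_{V'}$ dominant, the general fiber has dimension $\dim V'-(\ell-2)=0$. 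Intersecting these finitely many dense open subsets shows that for general $\ff$ the fiber $\phi^{-1}(\ff)\cap V$, which is precisely the set of unmarked secants met by $L$, is finite.
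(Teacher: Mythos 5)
Your proposal is correct and follows essentially the same route as the paper: both arguments rest on the dimension count $\dim V = \ell - \dim \B$ for the incidence locus in $M_{0,\ell+3}$, dispose of the cases $e_i-e_j\in\langle\B\rangle$ and $\dim\B=1$ exactly as you do, and handle the borderline case $\dim\B=2$ via the generic fiber of the forgetful map $\phi$ to $M_{0,\ell+1}$. Your write-up is marginally more explicit (justifying why the binomial equations descend to $M_{0,\ell+3}$ and decomposing $V$ into irreducible components before applying the fiber-dimension theorem), but the substance is identical.
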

\begin{proof}
If $\dim \B=1$, then since $\pi$ is primitive, we have $\ell=1$ and the map $\rho_{\B,\ff}$ is a monomial map. The claim is easily verified in this case. Likewise, if $e_i - e_j \in \langle \B \rangle$ for some $i\neq j$, the locus $V$ is empty in $\M_{0,\ell+3}$, so $L$ cannot intersect an unmarked secant. We now assume $\dim \B \geq 2$ (and in particular $\ell\geq 2$).

	Consider the map $\phi : M_{0,\ell+3} \to M_{0,\ell+1}$ forgetting $Q_0$ and $Q_1$. We see that
	\[
\dim \phi(V) \leq \dim V=\ell-\dim \B.
	\]
	If $\dim \B \geq 3$, we see that $\dim \phi(V) < \ell - 2 = \dim \M_{0, \ell+1}$. If $\dim \B=2$ and the dimension of the image is $\ell-2$, then a generic fiber is zero-dimensional, that is, there are at most finitely many unmarked secant lines. If the dimension of the image is any smaller, the generic fiber is empty.
\end{proof}

\begin{proof}[Proof of Theorem \ref{thm:primitive}]
	We have seen in Lemma \ref{lemma:primitive} that if $\pi$ is imprimitive, $\deg C_{\pi,\ff}<d$. Conversely, by Lemma \ref{lemma:unmarked}, the center of projection $L$ intersects at most finitely many unmarked secant lines, and hence at most finitely many secant lines. It follows that the map $\rho_{\pi,\ff}$ is generically injective, hence $\deg C_{\pi,\ff}=d$.

	In the imprimitive case, $C_{\pi,\ff}=C_{\Red(\pi)}$ by construction. The Cayley structure $\Red(\pi)$ is primitive, and so the claim regarding its degree follows from the first statement of the theorem.
\end{proof}
\begin{proof}[Proof of Theorem \ref{thm:smooth} (start)]
	The claim regarding cuspidal singularities follows from Lemmas \ref{lemma:markedcusp} and \ref{lemma:unmarkedcusp}. The claim regarding $C_{\pi,\ff}$ being smooth will then follow from the claim on nodal singularities.

	If $\dim \B \geq 3$, $\dim \B = 1$, or $e_i - e_j \in \langle \B \rangle$ for some $i\neq j$, we see from Lemmas \ref{lemma:marked} and \ref{lemma:unmarked} that $C_{\pi,\ff}$ has a nodal singularity if and only if $\pi$ is nodal. It remains to deal with the case where $\dim \B=2$ and $e_i - e_j \notin \langle \B \rangle$ for all $i\neq j$. We address this case in the following section with Theorem \ref{thm:ranktwo}.
\end{proof}

\subsection{Nodes in Rank Two Case}\label{sec:nodes2}
We analyze whether $L$ intersects unmarked secant lines in the exceptional case: we assume, throughout this subsection, $\dim \B=2$ and $e_i - e_j \notin \langle \B \rangle$ for all $i\neq j$.

The center of projection $L$ intersects unmarked secant lines if and only if the image of $V$ in $\M_{0,\ell+1}$ has dimension $\ell-2$. Consider the solution set of \eqref{eqn:V} as a codimension-two subset $Y \subseteq \KK^{\ell+1}$. The condition that the image of $V$ in $\M_{0,\ell+1}$ has dimension $\ell-2$ is equivalent to the condition that the orbit of $Y$ under the natural rational $GL(2)$-action is dense in $(\KK^*)^{\ell+1}$. Here, $GL(2)$ acts by
\[
\begin{pmatrix}
	a_{11} & a_{12}\\
	a_{21} & a_{22}
\end{pmatrix} \cdot (z_i)=\left(\frac{a_{11}z_i+a_{12}}{a_{21}z_i+a_{22}}\right).
\]

\begin{lemma}\label{lemma:orbit}
The orbit of $Y$ under the natural rational $GL(2)$-action fails to be dense in $(\KK^*)^{\ell+1}$ if and only if for all $z\in Y$, the linear space generated by $z$ and $z^{-1}$ 
	is linearly dependent with $\langle \B\rangle ^\perp$.
\end{lemma}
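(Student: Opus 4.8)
The plan is to decide the density of the orbit $GL(2)\cdot Y$ by an infinitesimal (tangent space) computation at a general point of $Y$, using that we are in characteristic zero. Recall that $Y\subseteq (\KK^*)^{\ell+1}$ is the subgroup cut out by the binomial equations $z^v = 1$ for $v\in\langle\B\rangle$; since $\langle\B\rangle$ has rank two, $Y$ is a union of finitely many cosets of a codimension-two subtorus. Taking the logarithmic derivative of $z^v$ shows that at any point $z\in Y$ the tangent space is
\[
T_z Y = D_z\cdot\langle\B\rangle^\perp,\qquad D_z := \mathrm{diag}(z_0,\ldots,z_\ell),
\]
where $\langle\B\rangle^\perp\subseteq\KK^{\ell+1}$ is the orthogonal complement under the standard pairing. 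Write $S_z := T_z Y + T_z(GL(2)\cdot z)$ for the image of the differential at $(\mathrm{id},z)$ of the action map $a\colon GL(2)\times Y\to(\KK^*)^{\ell+1}$. The goal is to show that $GL(2)\cdot Y$ is dense if and only if $S_z = \KK^{\ell+1}$ for general $z\in Y$, and then to identify this full-rank condition with the stated linear (in)dependence.

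For the first reduction, let $W = \overline{GL(2)\cdot Y}$, a $GL(2)$-invariant subvariety. By generic smoothness the differential of $a$ attains its maximal rank $\dim W$ on a dense open $U\subseteq GL(2)\times Y$; transporting any $(g,z)\in U$ by the automorphism $g^{-1}$ of $(\KK^*)^{\ell+1}$ carries $GL(2)\cdot gz$ to $GL(2)\cdot z$ and the $gY$-directions to $T_z Y$, so that $\dim S_z = \dim W$ for all $z$ in the (dense) image of $U$ in $Y$. Thus $GL(2)\cdot Y$ is dense if and only if $\dim S_z = \ell+1$ for general $z\in Y$. Moreover, since $\dim S_z<\ell+1$ is a closed condition (vanishing of maximal minors) and a single component of $Y$ on which $S_z$ is generically full would already force density, failure of density is equivalent to $\dim S_z<\ell+1$ for \emph{every} $z\in Y$; this yields the quantifier ``for all $z$'' in the statement.

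It remains to compute $S_z$. Writing a tangent vector to $(\KK^*)^{\ell+1}$ as $w=(w_i)$ and differentiating the M\"obius action $z_i\mapsto (a_{11}z_i+a_{12})/(a_{21}z_i+a_{22})$ at the identity gives $\delta z_i = a_{11}z_i + a_{12} - a_{21}z_i^2 - a_{22}z_i$, so
\[
T_z(GL(2)\cdot z) = \mathrm{span}\{(z_i),\ (1),\ (z_i^2)\}.
\]
Applying the isomorphism $D_z^{-1}$ simultaneously normalizes $T_z Y$ to $\langle\B\rangle^\perp$ and sends these three generators to $\mathbf 1 = (1)$, $z^{-1}=(z_i^{-1})$ and $z = (z_i)$. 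Since every $v\in\langle\B\rangle$ is a difference of points of $\Delta_\ell(d)$ and hence has coordinate sum zero, the all-ones vector $\mathbf 1$ already lies in $\langle\B\rangle^\perp$. Therefore
\[
\dim S_z = \dim D_z^{-1}S_z = \dim\bigl(\langle\B\rangle^\perp + \mathrm{span}\{z,z^{-1}\}\bigr),
\]
which equals $\ell+1$ exactly when $\mathrm{span}\{z,z^{-1}\}$ is linearly independent from $\langle\B\rangle^\perp$. Negating and combining with the previous paragraph gives the lemma.

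The step I expect to be the main obstacle is this first reduction: justifying that orbit density can be decided from $S_z$ computed at points of $Y$ itself, rather than at a general point of $GL(2)\times Y$, where ``$Y$'' is replaced by its M\"obius translate $gY$. The clean fix is the $GL(2)$-equivariance transport described above, together with generic smoothness in characteristic zero; one must also keep track of the fact that $Y$ may be disconnected, so that the ``for all $z$'' phrasing holds on every component. The remaining computation is routine once one applies the rescaling $D_z^{-1}$, which is the device that makes $z$ and $z^{-1}$ (rather than $z$ and $z^2$) appear alongside the normalized tangent space $\langle\B\rangle^\perp$.
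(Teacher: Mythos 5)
Your proof is correct and follows essentially the same route as the paper's: compute the image of the differential of the action map at $(e,z)$, obtaining $\mathrm{span}\{(1),(z_i),(z_i^2)\}$ plus the tangent space $D_z\cdot\langle\B\rangle^\perp$ of $Y$, then rescale by $D_z^{-1}$ and absorb $\mathbf 1$ into $\langle\B\rangle^\perp$ to reduce to the stated condition on $\mathrm{span}\{z,z^{-1}\}$. The only difference is that you spell out the equivariance/transport and quantifier arguments that the paper leaves implicit in its phrase ``for general (or equivalently all) $z\in Y$''.
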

\begin{proof}
We consider the image of the differential of the rational map
\[
GL(2)\times Y\dashrightarrow (\KK^*)^{\ell+1}
\]
at the point $(e,z)$ where $e$ is the identity element of $GL(2)$. 
Differentiating with respect to the $GL(2)$ directions, we obtain the span of 
\[z^2:=(z_i^2)_{i=0,\ldots,\ell},\qquad z:=(z_i)_{i=0,\ldots,\ell}\qquad \textrm{and}\qquad z^0:=(1)_{i=0,\ldots,\ell}.\]

On the other hand, the variety $Y\cap (\KK^*)^{\ell+1}$ is the quasitorus with character group $\ZZ^{\ell+1}/\langle \B \rangle$, and hence cocharacter lattice $\langle \B \rangle^\perp \cap {\ZZ^{\ell+1}}$. It follows that the tangent space of $Y$ at $z$ is the span of the vectors $z \langle \B \rangle^\perp$.
Since $\langle \B \rangle ^\perp$ contains $(1)_{i=0, \ldots, \ell}$, this tangent space contains $z$.

Putting this together, we obtain that the image of the differential at $(e,z)$ is the span of $z^2, z^0$ and $z\langle \B \rangle^\perp$. The orbit of $Y$ fails to be dense in $(\KK^*)^{\ell+1}$ if and only if for general (or equivalently all) $z\in Y$, this linear space has dimension less that $\ell+1=\rank \langle \B \rangle ^\perp+2$, in other words, the linear space generated by $z^2$ and $z^0$ is linearly dependent with $z\langle \B \rangle^\perp$. Dividing each coordinate by $z_i$, this is equivalent to requiring that  the linear space generated by $z$ and $z^{-1}$ is linearly dependent with $\langle \B \rangle^\perp$.
\end{proof}

We will now characterize lattices $\langle \B \rangle$ for which the condition of Lemma \ref{lemma:orbit} is fulfilled.
We note the following: if $I$ is an ideal generated by binomials in a polynomial ring, and $f \in I$ is a polynomial with support containing a monomial $z^v$, then there is a monomial $z^{v'}$ in the support of $f$, with $v' \ne v$, such that $z^v - z^{v'} \in I$. This can be shown, for example, by reducing $f$ using a Gr\"obner basis for $I$ consisting of binomials.

\begin{lemma}\label{lemma:rel}
Suppose that for all $z\in Y$, $z$ and $z^{-1}$ is linearly dependent with $\langle \B\rangle ^\perp$. Let $0\leq \alpha ,\beta\leq \ell$ with $\alpha\neq \beta$ and suppose further that the projection of $\langle \B \rangle$ to the $\alpha$th and $\beta$th coordinates has rank $2$.
Then $\langle \B \rangle$ contains an element of the form $e_\alpha-e_\beta+e_\gamma-e_\delta$ for some 
$\gamma\neq \delta$ with $(\alpha,\beta)\neq (\delta,\gamma)$
.
\end{lemma}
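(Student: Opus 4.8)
The plan is to reinterpret the hypothesis as a single polynomial identity valid on all of $Y$, and then read off the desired relation using linear independence of characters on the quasitorus $Y$.

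First I would fix a basis $a,b$ of the two-dimensional space $\langle \B\rangle\otimes\KK\subseteq\KK^{\ell+1}$ and record the Pl\"ucker coordinates $c_{ij}=a_ib_j-a_jb_i$. These satisfy $c_{ij}=-c_{ji}$, and $c_{ij}\neq 0$ precisely when the projection of $\langle \B\rangle$ to the $i$th and $j$th coordinates has rank two; in particular, the hypothesis on $\alpha,\beta$ gives $c_{\alpha\beta}\neq 0$. Next I would dualize the standing assumption. Saying that $z$ and $z^{-1}$ are linearly dependent with $\langle \B\rangle^\perp$ means that $\langle z,z^{-1}\rangle+\langle \B\rangle^\perp$ has dimension less than $\ell+1$, which by taking orthogonal complements is equivalent to $\langle z,z^{-1}\rangle^\perp\cap(\langle \B\rangle\otimes\KK)\neq 0$; that is, there is a nonzero $w\in\langle \B\rangle\otimes\KK$ with $w\cdot z=w\cdot z^{-1}=0$. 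As $\langle \B\rangle\otimes\KK$ is two-dimensional, this is exactly the vanishing of the determinant of the two functionals $w\mapsto w\cdot z$ and $w\mapsto w\cdot z^{-1}$ restricted to this plane:
\[
(a\cdot z)(b\cdot z^{-1})-(b\cdot z)(a\cdot z^{-1})=0.
\]
Expanding and using $c_{ii}=0$, this reads $\sum_{i\neq j}c_{ij}\,z^{e_i-e_j}=0$, an identity which by hypothesis holds for every $z\in Y$.

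Finally I would invoke linear independence of characters. Since $Y\cap(\KK^*)^{\ell+1}$ is the quasitorus with character group $\ZZ^{\ell+1}/\langle \B\rangle$, each monomial $z^{e_i-e_j}$ restricts to a character depending only on the class of $e_i-e_j$ modulo $\langle \B\rangle$, and distinct characters are linearly independent. Grouping the identity above by class therefore forces the sum of the $c_{ij}$ over each class to vanish. Applying this to the class of $e_\alpha-e_\beta$: its total coefficient is $c_{\alpha\beta}$ plus the contributions of any other pairs in the same class, and since this must be zero while $c_{\alpha\beta}\neq 0$, there exists a further index pair $(\gamma',\delta')\neq(\alpha,\beta)$ with $e_{\gamma'}-e_{\delta'}\equiv e_\alpha-e_\beta\pmod{\langle \B\rangle}$. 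Setting $\gamma=\delta'$ and $\delta=\gamma'$ then yields $e_\alpha-e_\beta+e_\gamma-e_\delta\in\langle \B\rangle$ with $\gamma\neq\delta$ and $(\delta,\gamma)=(\gamma',\delta')\neq(\alpha,\beta)$, which is exactly the claim.

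The expansion of the determinant and the index bookkeeping are routine; the one step requiring care is the dualization combined with the linear-independence-of-characters argument on $Y$, which may be a disconnected quasitorus when $\langle \B\rangle$ is non-saturated. I expect this to be the main point to get right, but it goes through cleanly because in characteristic zero the coordinate ring of $Y$ is the group algebra $\KK[\ZZ^{\ell+1}/\langle \B\rangle]$, whose group elements form a basis, so that the vanishing of $\sum_{i\neq j}c_{ij}z^{e_i-e_j}$ on $Y$ is equivalent to the vanishing of each class-wise coefficient sum.
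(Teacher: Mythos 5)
Your proof is correct and follows essentially the same route as the paper: both reduce the hypothesis to the vanishing on $Y$ of a Laurent polynomial whose coefficients are the Pl\"ucker coordinates $c_{ij}$ of $\langle\B\rangle$ (your $2\times 2$ determinant on $\langle\B\rangle\otimes\KK$ is, up to a global scalar, the paper's $(\ell+1)\times(\ell+1)$ determinant with rows $z$, $z^{-1}$ and a basis of $\langle\B\rangle_\QQ^\perp$, by complementary minors), and then extract the four-term relation from the structure of the lattice ideal. Your final step --- grouping monomials by their class in $\ZZ^{\ell+1}/\langle\B\rangle$ and using that group elements form a basis of the group algebra --- is a clean substitute for the paper's appeal to binomial Gr\"obner bases, and the characteristic-zero point you flag is exactly the radicality of $I_{\langle\B\rangle}$ that the paper also invokes.
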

\begin{proof}
	Let $\langle \B\rangle_\QQ$ be the $\QQ$-span of $\langle \B\rangle$, and let $w_2,\ldots,w_{\ell}$ be a basis for $\langle \B\rangle_\QQ^\perp$. Let
	$W$ be the matrix with rows $z$, $z^{-1}$, $w_2,\ldots,w_\ell$, where $z$ is treated as a vector of indeterminates. We consider the Laurent polynomial $\det (W)$. Since for all $z\in (\KK^*)^{\ell+1}$ satisfying \eqref{eqn:V}, $z$ and $z^{-1}$ are linearly dependent with $\langle \B\rangle ^\perp$, and the ideal $I_{\langle\B\rangle}\subseteq \KK[z_i^{\pm 1}]$ corresponding to the equations \eqref{eqn:V} is radical, we have $\det(W)\in I_{\langle \B\rangle}$.  We may write 
\[
	\det(W)=\sum_{\alpha< \beta} \pm \Delta_{\alpha\beta} (z_\alpha z_\beta^{-1}-z_\beta z_\alpha^{-1})
\]
where $\Delta_{\alpha\beta}$ is the determinant of the matrix obtained from the matrix $W$ by deleting the first two rows and the $\alpha$th and $\beta$th columns. We note that $\Delta_{\alpha\beta}\ne0$ if and only if the projection of $\langle \B\rangle $ to the $\alpha$th and $\beta$th coordinates has rank two.

Suppose that $\Delta_{\alpha\beta}\neq 0$. Then the monomial $z_\alpha z_\beta^{-1}$ appears with non-zero coefficient in $\det(W)$. As discussed above, it follows that 
there must also exist $\gamma\neq \delta$ with $(\alpha,\beta)\neq (\delta,\gamma)$
such that $\Delta_{\gamma\delta}\neq 0$ and $z_\alpha z_\beta^{-1}-z_\delta z_\gamma^{-1}\in I_{\langle\B\rangle}$, or equivalently,
\[
e_\alpha-e_\beta+e_\gamma-e_\delta \in {\langle\B\rangle}. \qedhere
\]
\end{proof}
The element given above is sufficiently close to one of the form $e_i - e_j \notin \langle \B \rangle$ that we may directly characterize the possible lattices. Note that if $p (e_i - e_j) \in \langle \B \rangle$ where $p$ is a prime integer, then $p$ divides the index of $\langle \B \rangle$ in $\langle \B \rangle_\QQ \cap \mathbb{Z}^{\ell+1}$.

\begin{lemma}\label{lemma:saturated}
Suppose that $L$ does not intersect any unmarked secants, and $e_i-e_j\notin \langle \B\rangle$ for all $i\neq j$. 
The lattice $\langle \B \rangle$ is generated by two elements
\[
e_\alpha-e_\beta+e_\gamma-e_\delta \in {\langle\B\rangle}
\]
of the form promised by Lemma \ref{lemma:rel} whose supports intersect in at least one coordinate.
\end{lemma}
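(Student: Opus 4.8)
The plan is to work with the rational span $\langle \B \rangle_\QQ$, presented as the row space of a $2 \times (\ell+1)$ matrix whose $i$-th column I denote $c_i \in \QQ^2$. Since $\pi$ is basepoint-free and concise, $\langle \B \rangle$ has full support, so every $c_i$ is nonzero; thus each coordinate $i$ carries a well-defined \emph{direction} $[c_i] \in \PP^1$, and the projection of $\langle \B \rangle$ to a pair $(\alpha,\beta)$ has rank two exactly when $[c_\alpha] \ne [c_\beta]$. The hypothesis that $L$ meets no unmarked secant is precisely the failure of density in Lemma \ref{lemma:orbit}, so Lemma \ref{lemma:rel} applies: for every pair of coordinates with distinct directions, $\langle \B \rangle$ contains a special element $e_\alpha - e_\beta + e_\gamma - e_\delta$. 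The structural fact I would record first is that any such special element, written as $m = (\langle x, c_i\rangle)_i$, has $x \ne 0$, and $\langle x, c_i\rangle = 0$ for every $i \notin \operatorname{supp}(m)$; hence all coordinates outside $\operatorname{supp}(m)$ share the single direction orthogonal to $x$, while every coordinate in $\operatorname{supp}(m)$ avoids it.

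Next I would construct the two generators. Start from any special element $m_1$, which exists by the previous paragraph. If $\operatorname{supp}(m_1) \ne \{0,\dots,\ell\}$, choose $a \in \operatorname{supp}(m_1)$ and $c \notin \operatorname{supp}(m_1)$; by the structural fact these have different directions, so Lemma \ref{lemma:rel} produces a second special element $m_2$ whose support contains both $a$ and $c$. Then $a \in \operatorname{supp}(m_1) \cap \operatorname{supp}(m_2)$, so the supports intersect, and since $(m_1)_c = 0$ while $(m_2)_c \ne 0$, the two elements are $\QQ$-linearly independent and therefore span $\langle \B\rangle_\QQ$. (When $\operatorname{supp}(m_1)$ is all of $\{0,\dots,\ell\}$, forcing $\ell \le 3$ since special elements have support of size at most four, I instead take $m_2$ from any other rank-two pair; its support automatically meets the full support of $m_1$, and independence is checked directly in this finite range.)

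It then remains to upgrade $\QQ$-spanning to $\ZZ$-generation, i.e.\ to show $\Lambda' := \ZZ m_1 + \ZZ m_2$ equals $\langle \B\rangle$. This is where the hypothesis $e_i - e_j \notin \langle\B\rangle$ does the work, in the spirit of the divisibility observation stated just before the lemma. Suppose the index $[\langle\B\rangle : \Lambda']$ were divisible by a prime $p$; then there is an integral $w \in \langle\B\rangle \setminus \Lambda'$ with $w = \tfrac{1}{p}(s\,m_1 + t\,m_2)$ and $(s,t) \not\equiv (0,0) \pmod p$. Reading the resulting congruences off at the coordinate $c$ (where $m_1$ vanishes and $m_2$ is $\pm 1$ or $\pm 2$) and at a coordinate where $m_1$ has a unit coefficient forces $p \mid s$ and $p \mid t$ whenever the relevant coefficients are units, a contradiction. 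In the remaining cases the surviving coefficients are $\pm 2$ and $p = 2$, and then combining the integral defect vector $w$ with the generators $m_1, m_2$ produces a forbidden difference $e_i - e_j \in \langle\B\rangle$, again a contradiction. Hence the index is $1$ and $m_1, m_2$ generate $\langle\B\rangle$.

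I expect the last step to be the main obstacle: the coefficient bookkeeping genuinely splits according to the shape of the special elements, depending on whether coincidences collapse $e_\alpha - e_\beta + e_\gamma - e_\delta$ into $2e_\alpha - e_\beta - e_\delta$, $e_\alpha + e_\gamma - 2e_\beta$, or $2(e_\alpha - e_\beta)$. It is exactly the $\pm 2$ shapes that render $\langle\B\rangle$ non-saturated, and in those cases the forbidden difference is extracted not from a single congruence but by a short integral manipulation of $w$ against the generators (for instance, subtracting suitable even multiples of the $2(e_\alpha - e_\bullet)$ generators to isolate an $e_i - e_j$). A secondary point requiring care is guaranteeing in the small-coordinate regime $\ell \le 3$ that the second special element can always be chosen independent of the first with overlapping support, so that the construction never degenerates; this is a finite check but must be dispatched explicitly.
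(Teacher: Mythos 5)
Your overall skeleton matches the paper's: extract one special element from Lemma \ref{lemma:rel}, obtain a second one by applying Lemma \ref{lemma:rel} to a rank-two pair that straddles the support of the first, and then use $e_i - e_j \notin \langle\B\rangle$ to promote rational spanning to integral generation. Your framing via the column directions $[c_i]\in\PP^1$ is a clean, uniform way to see which pairs admit Lemma \ref{lemma:rel}, and your first two steps (the structural fact about coordinates outside $\operatorname{supp}(m)$ sharing one direction, the choice of $a\in\operatorname{supp}(m_1)$, $c\notin\operatorname{supp}(m_1)$, and the resulting independence) are correct; note only that you need $e_i-e_j\notin\langle\B\rangle$ already here to rule out $\alpha=\delta$ or $\beta=\gamma$, which is what guarantees $\alpha,\beta\in\operatorname{supp}(m_2)$.

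The gap is that the step you flag as "the main obstacle" is not an obstacle to be expected later --- it \emph{is} the lemma. The entire content of Lemma \ref{lemma:saturated} is the integral statement, and the paper's proof consists almost entirely of the case analysis (two-term, three-term, four-term shapes) that you defer: for each shape one either checks that $\ZZ m_1+\ZZ m_2$ is already saturated, or computes its index in the saturation and matches it against the lower bound on $[\,\langle\B\rangle_\QQ\cap\ZZ^{\ell+1} : \langle\B\rangle\,]$ forced by $e_i-e_j\notin\langle\B\rangle$, or shows the configuration collapses into an earlier case. Your prime-index congruence plan does work in the cases I checked (e.g.\ $m_1=2e_0-2e_1$, $m_2=2e_1-2e_2$ or $e_0+e_1-2e_2$), but as written it is a plan, not a proof. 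Separately, the degenerate case $\operatorname{supp}(m_1)=\{0,\dots,\ell\}$ (so $\ell\le 3$) needs more than "a finite check": when $m_1=e_0-e_1+e_2-e_3$ it is not automatic that Lemma \ref{lemma:rel} applied to some rank-two pair returns an element independent of $m_1$ rather than $\pm m_1$ itself; the paper resolves this by showing such a configuration forces a two- or three-term relation to exist as well, reverting to an earlier case. You should either reproduce that argument or supply the independence claim explicitly.
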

\begin{proof}
	Since $L$ does not intersect any unmarked secants, and there must exist some coordinates $\alpha,\beta$ such the projection of $\langle \B \rangle$ to these coordinates has rank two, it follows from Lemmas \ref{lemma:orbit} and \ref{lemma:rel} that there exists some element in $\langle \B \rangle$ of the form
\[
e_\alpha-e_\beta+e_\gamma-e_\delta.
\]
Since $e_i-e_j\notin{\langle\B\rangle}$, we cannot have $\alpha=\delta$ or $\beta=\gamma$. This means that we must be in one of the following cases:
\begin{itemize}
	\item $2e_\alpha-2e_\beta\in{\langle\B\rangle}$ (two-term relation);
	\item $2e_\alpha-e_\beta-e_\delta \in {\langle\B\rangle}$ ($\alpha,\beta,\delta$ distinct) or $2e_\beta-e_\alpha-e_\gamma\in {\langle\B\rangle}$ ($\alpha,\beta,\gamma$ distinct) (three-term relation);
	\item $e_\alpha-e_\beta+e_\gamma-e_\delta \in {\langle\B\rangle}$ ($\alpha,\beta,\gamma,\delta$ distinct) (four-term relation).
\end{itemize}

We analyze of each these cases in turn. Suppose first that $\langle \B \rangle$ has a {\bf two-term relation}, without loss of generality $2e_0-2e_1$.
Since $\ell \geq 2$, there must be an element $v$ of $\langle \B \rangle$ with $v_2\neq 0$, so we can apply Lemma \ref{lemma:rel} to $\alpha=0,\beta=2$ to obtain another two, three or four-term relation $w$ whose support overlaps with $2e_0-2e_1$.

Let $\Gamma$ be the lattice generated by $2e_0-2e_1$ and $w$. Clearly $\langle \B \rangle_\QQ=\Gamma_\QQ$, but we claim that in fact $\langle \B \rangle=\Gamma$. Indeed, if $w_i=1$ for some $i\geq 2$, the index of $\Gamma$ in $\Gamma_\QQ\cap \ZZ^{\ell+1}$ is $2$. However, $e_0-e_1\in\langle \B\rangle_\QQ \setminus \langle \B \rangle$, so $\langle \B \rangle$ also has index two in $\Gamma_\QQ\cap \ZZ^{\ell+1}$ and we are done. Only two possibilities remain: either $w=2e_1-2e_2$ or $w=-e_0-e_1+2e_2$. In both cases $\Gamma_\QQ \cap \ZZ^{\ell+1} = \langle e_0-e_1, e_1-e_2\rangle$, in which $\Gamma$ and $\langle \B \rangle$ must both then have index four, so they coincide.

We next suppose that $\langle \B \rangle$ has a {\bf three-term relation}, without loss of generality $2e_0-e_1-e_2$, but no two-term relations. If $\ell=2$, then since $\sum_i v_i=0$ for any $v\in\langle \B\rangle$, it follows that the projection of $\langle \B \rangle$ to the coordinates $1$ and $2$ must have full rank, so by Lemma \ref{lemma:rel} we may assume without loss of generality that $-e_0-e_1+2e_2\in \langle \B \rangle$. The sublattice generated by 
$2e_0-e_1-e_2$ and  $-e_0-e_1+2e_2$ has index $3$ in $\langle \B \rangle_\QQ\cap \ZZ^3$ and contains $3e_0 - 3e_2$. Since $e_0-e_2\notin \langle \B \rangle$ we conclude that it must coincide with $\langle \B \rangle$. 

If instead $\ell\geq 3$, then by Lemma \ref{lemma:rel} we must have a three or four-term relation $w$ whose support includes $2$ and $3$. If $w=-e_1-e_2+2e_3$, the sublattice generated by $2e_0-e_1-e_2$ and $w$ contains $2e_0 - 2e_3$, contradicting our assumption on $\langle \B \rangle$.
For all other $w$,  the sublattice generated by $2e_0-e_1-e_2$ and $w$ is saturated, 
so it must coincide with $\langle \B\rangle$.

Finally, suppose that $\langle \B \rangle$ only has {\bf four-term relations}. Without loss of generality, $e_0-e_1+e_2-e_3\in \langle \B \rangle$. If $\ell \geq 4$, Lemma \ref{lemma:rel} guarantees a four-term relation whose support includes $0$ and $4$; the lattice this generates is saturated, hence must coincide with $\langle \B \rangle$. If instead $\ell=3$, it follows from Lemma \ref{lemma:rel} that there must be another linearly independent four-term relation, without loss of generality $e_0+e_1-e_2-e_3$. But then we also obtain the two-term relation $2e_0-2e_3$, contradicting our assumption.
\end{proof}

\begin{lemma}\label{lemma:deg3}
Suppose that $L$ does not intersect any unmarked secants, and $e_i-e_j\notin \langle \B \rangle$ for all $i\neq j$. Then $\langle \B \rangle=\langle \B'\rangle$ for $\B'$ the image of a Cayley structure of degree at most three and with $\#\B'=3$.
\end{lemma}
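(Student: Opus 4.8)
The plan is to feed the structural output of Lemma \ref{lemma:saturated} into an explicit three-point construction. That lemma tells us that, under our hypotheses, $\langle \B \rangle$ is generated by two vectors $g_1,g_2 \in \ZZ^{\ell+1}$, each of the four-term shape $e_\alpha - e_\beta + e_\gamma - e_\delta$ (allowing the degenerations $2e_\alpha - e_\beta - e_\delta$ and $2e_\alpha - 2e_\beta$), whose supports share at least one coordinate $c$. The single feature I will exploit is that for each such vector, the sum of its positive coordinates and the absolute value of the sum of its negative coordinates both equal $2$; this holds for all three shapes and is preserved under negation. I will also use the fact that, since $\B$ is basepoint-free and concise, every coordinate $0,\dots,\ell$ occurs in the support of $\langle \B \rangle = \langle g_1,g_2\rangle$, so no coordinate is extraneous.

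First I would orient the generators to make their negative parts overlap. Since $c$ lies in both supports, I may choose signs $\epsilon_1,\epsilon_2 \in \{\pm 1\}$ with $\epsilon_1 (g_1)_c < 0$ and $\epsilon_2 (g_2)_c < 0$; set $a = \epsilon_1 g_1$, $b = \epsilon_2 g_2$, and $p_i = \max(0,-a_i,-b_i)$. I then define $v_0 = p$, $v_1 = p + a$, $v_2 = p + b$ and $\B' = \{v_0,v_1,v_2\}$. By construction these points lie in $\ZZ^{\ell+1}_{\geq 0}$, attain the value $0$ in each coordinate, and (using $\sum_i a_i = \sum_i b_i = 0$) have a common coordinate-sum $d' = \sum_i p_i$; thus $\B' \subseteq \Delta_\ell(d')$ is basepoint-free, and it is concise because every coordinate lies in the support of $\langle g_1,g_2\rangle$. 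Since $a,b$ are linearly independent (they generate the rank-two lattice $\langle \B \rangle$), the three points are distinct, so $\#\B' = 3$ and $\dim \B' = 2$, and the difference lattice is $\langle a,b\rangle = \langle g_1,g_2\rangle = \langle \B \rangle$, exactly as required.

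It then remains to bound the degree. Writing $x^- := \max(0,-x)$, I have
\[
d' = \sum_i \max(a_i^-, b_i^-) \;\leq\; \sum_i a_i^- + \sum_i b_i^- - \sum_i \min(a_i^-, b_i^-) \;\leq\; 2 + 2 - 1 = 3,
\]
where the two middle sums are the negative masses of $a$ and $b$, each equal to $2$, and the last inequality uses $\min(a_c^-, b_c^-) \geq 1$, which is exactly what the sign choice guarantees. The conceptual work is all in Lemma \ref{lemma:saturated}; the present lemma is the short bookkeeping step that converts its output into an honest Cayley structure. The one point genuinely requiring care is the orientation of the generators: a naive realization with $v_1 = v_0 + g_1$ in the original signs can have degree $4$, and it is essential to flip signs so that the negative parts overlap at $c$ before taking the coordinatewise minimum. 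As a sanity check, one may instead observe that Lemma \ref{lemma:saturated} leaves only finitely many lattices up to permutation of coordinates and exhibit the realization case by case; these recover precisely the lattices of Table \ref{table:secant}.
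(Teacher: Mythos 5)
Your proposal is correct and follows essentially the same route as the paper: both take the two generators supplied by Lemma \ref{lemma:saturated}, flip signs so their negative parts overlap at the shared coordinate, take the coordinatewise maximum of the negative parts as the base point, and translate the two generators by it to get a three-element basepoint-free, concise set of degree at most $3$ with the same difference lattice. Your explicit $\max = {}+{}-\min$ accounting of the degree bound and your warning about the necessity of the sign flip are just more detailed versions of the paper's terser statements of the same steps.
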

\begin{proof}
	By Lemma \ref{lemma:saturated}, $\langle \B \rangle$ is generated by elements $v,w$ of the form stated in Lemma \ref{lemma:rel} whose support intersects in some index $i$.
After possibly scaling by $-1$, we assume that $v_i,w_i<0$.
	Define $u=(u_j)\in\ZZ^{\ell+1}$ as
	\[
		u_j=\max \{0,-v_j,-w_j\}. 
	\]
Since the sum of the negative entries of $v$ and $w$ are both $-2$, it follows that $d:=\sum_j u_j \leq 3$.
We set 
\[
	\B':=\{ u, u+v,u+w\}\subseteq \Delta_{\ell}(d).
\]
Then $\B'$ is the image of a Cayley structure of degree $d$. Indeed, since $\B$ is the image of a Cayley structure, for every $j$ either $u_j$, $(u+v)_j$, or $(u+w)_j\neq 0$. Likewise, by definition of $u$, for each $j$ either $u_j=0$, $u_j+v_j=0$, or $u_j+w_j=0$.
\end{proof}

\begin{lemma}\label{lemma:nomarked}
Suppose that $\#\B=3$ and $\B$ is the image of a Cayley structure of degree at most three. Assume that $e_i-e_j\notin \langle \B \rangle$ for any $i\neq j$. Then $L$ does not intersect any marked secants, or any tangents.
\end{lemma}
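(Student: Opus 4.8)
The plan is to translate the two conclusions into purely combinatorial statements about $\B$ and then rule them out by a short case analysis driven by the degree bound. By Lemmas~\ref{lemma:markedcusp} and~\ref{lemma:unmarkedcusp}, the center $L$ meets a tangent line of $C_d$ if and only if $\B$ is cuspidal, while by Lemma~\ref{lemma:marked}, $L$ meets a marked secant if and only if there are indices $i \ne j$ and an element $v \in \B$ with $v_i = v_j = 0$ and $v'_i, v'_j > 0$ for every $v' \ne v$. So it suffices to show that under our hypotheses $\B$ is neither cuspidal nor satisfies this marked-nodal condition. In each case I would produce an element of the form $e_i - e_j$ in $\langle \B \rangle$, contradicting the assumption $e_i - e_j \notin \langle \B \rangle$.

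Write $\B = \{a, b, c\}$ as three vectors, each with coordinate sum equal to the common degree $d \le 3$. The key mechanism is that each singular condition singles out one ``special'' point and, because the remaining two points have entries bounded below while their coordinate sums are only $d \le 3$, forces those two points to be concentrated on very few coordinates. Since $b - c = (b-a) - (c-a) \in \langle \B \rangle$, it is then enough to check that $b - c$ always has the form $e_i - e_j$, unless $b = c$, which is excluded as $\#\B = 3$. For the cuspidal case I take the special coordinate to be $0$, so $a_0 = 0$ and $b_0, c_0 \ge 2$; then having coordinate sum $d \le 3$ forces each of $b, c$ to be $2e_0$ (when $d=2$), or $3e_0$ or $2e_0 + e_p$ for a single $p \ne 0$ (when $d=3$). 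Comparing the possibilities, $b - c$ is $0$, $e_0 - e_p$, or $e_p - e_q$, so either $b = c$ or $e_i - e_j \in \langle \B \rangle$, a contradiction.

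The marked-nodal case is handled identically: assuming $a_0 = a_1 = 0$ and $b_0, b_1, c_0, c_1 \ge 1$, the bound $b_0 + b_1 \ge 2$ together with coordinate sum $d \le 3$ forces $b$ (and likewise $c$) to be one of $e_0 + e_1$ (when $d=2$), $2e_0 + e_1$, $e_0 + 2e_1$, or $e_0 + e_1 + e_p$ with $p \ge 2$. Enumerating the pairs $(b,c)$ shows that $b - c$ is always of the form $e_i - e_j$ unless $b = c$, again contradicting the hypothesis. I expect the whole argument to be essentially bookkeeping rather than containing any genuinely hard step: the real content is the single observation that $d \le 3$ makes the two non-special points nearly monomial, so their difference collapses to a single root difference $e_i - e_j$. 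The only thing to be careful about is tracking which coordinate can carry the ``leftover'' unit of degree and confirming in each subcase that the two points either coincide or differ by such an $e_i - e_j$.
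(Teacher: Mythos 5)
Your proposal is correct and follows essentially the same route as the paper: reduce to the combinatorial characterizations from Lemmas \ref{lemma:markedcusp}, \ref{lemma:unmarkedcusp}, and \ref{lemma:marked}, then use the degree bound $d\le 3$ to force the two non-special points of $\B$ to differ by some $e_\alpha-e_\beta$ (or coincide), contradicting the hypotheses. Your write-up just spells out the short enumeration that the paper's proof leaves implicit.
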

\begin{proof}
	Let $\B=\{u,v,w\}$.
	By Lemma \ref{lemma:unmarkedcusp}, $L$ does not intersect any unmarked tangents. Suppose $L$ intersects the tangent $T_{P_k} C_d$. Then without loss of generality, by Lemma \ref{lemma:markedcusp} $u_k=0$ and $v_k,w_k\geq 2$. But since $\sum v_j=\sum w_j \leq 3$, this implies $v-w=e_\alpha-e_\beta\in\B$ for some $\alpha,\beta$, a contradiction. 

	Suppose instead that $L$ intersects the marked secant through $P_i$ and $P_j$. Then without loss of generality, by Lemma \ref{lemma:marked} $u_i=u_j=0$ and $v_i,v_j,w_i,w_j>0$. Again since $\sum v_j=\sum w_j \leq 3$, this implies $v-w=e_\alpha-e_\beta\in\B$ for some $\alpha,\beta$, a contradiction. 
\end{proof}

\begin{lemma}\label{lemma:conic}
	Suppose that $\dim \B=2$, $\#\B=3$, and $\B$ is the image of a Cayley structure of degree $2$. Then up to permutation of the coordinates, $\langle \B\rangle$ is one of the lattices from Table \ref{table:secant}.
\end{lemma}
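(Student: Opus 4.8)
The plan is to classify directly the possible three-point sets $\B=\{p,q,r\}\subseteq\Delta_\ell(2)$, using the standing assumption of this subsection that $e_i-e_j\notin\langle\B\rangle$ for all $i\neq j$. First I would record the shape of the points: since each element of $\Delta_\ell(2)$ has nonnegative integer coordinates summing to $2$, every one of $p,q,r$ is either of the form $2e_a$ (call these \emph{square} points) or of the form $e_a+e_b$ with $a\neq b$ (call these \emph{edge} points). Let $s\in\{0,1,2,3\}$ be the number of square points among $p,q,r$; the whole argument is then organized by the value of $s$.

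The key observation is that the hypothesis $e_i-e_j\notin\langle\B\rangle$ forces the supports of the three points to be as disjoint as possible. Indeed, each pairwise difference $q-p$, $r-p$, $r-q$ lies in $\langle\B\rangle$, so none of them may equal $e_i-e_j$. Inspecting the possibilities: two edge points $e_a+e_b$ and $e_c+e_d$ sharing a coordinate have difference of the form $e_i-e_j$, so any two edge points must have disjoint support; a square $2e_a$ and an edge $e_c+e_d$ with $a\in\{c,d\}$ likewise produce a difference $e_i-e_j$, so each edge point must avoid every square coordinate; and two square points $2e_a,2e_b$ already lie on distinct coordinates since the points are distinct (and $2e_a-2e_b$ is never of the form $e_i-e_j$). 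Hence every coordinate index occurring in $\B$ belongs to exactly one of the three points, and by conciseness these indices are exactly $0,\ldots,\ell$. Counting gives $\ell+1=s+2(3-s)=6-s$. (Basepoint-freeness is then automatic, since each coordinate is zero in the two points to which it does not belong.)

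It remains to read off the lattice in each of the four cases, which is now forced up to a permutation of coordinates. For $s=0$ we get $\B=\{e_0+e_1,\,e_2+e_3,\,e_4+e_5\}$ in $\ZZ^6$; for $s=1$ we get $\{2e_0,\,e_1+e_2,\,e_3+e_4\}$ in $\ZZ^5$; for $s=2$ we get $\{2e_0,\,2e_1,\,e_2+e_3\}$ in $\ZZ^4$; and for $s=3$ we get $\{2e_0,\,2e_1,\,2e_2\}$ in $\ZZ^3$. In each case I would compute the two difference vectors based at the first point and match them, up to sign, against the entries of Table \ref{table:secant}: these yield respectively the lattice generated by $(1,1,-1,-1,0,0),(1,1,0,0,-1,-1)$; by $(2,-1,-1,0,0),(2,0,0,-1,-1)$; by $(2,-2,0,0),(2,0,-1,-1)$; and by $(2,-2,0),(2,0,-2)$. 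All four configurations are visibly affinely independent, consistent with the hypothesis $\dim\B=2$. (The remaining table lattice, generated by $(2,-2,0),(2,-1,-1)$, contains $e_1-e_2$ and is therefore excluded by the standing assumption here; it arises only among the degree-three configurations treated separately.)

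Since this is ultimately a finite case check, there is no deep obstacle; the step requiring the most care is the support-disjointness argument, namely verifying that \emph{any} coincidence of supports between two of the points produces a pairwise difference equal to some $e_i-e_j$, hence an element of $\langle\B\rangle$ contradicting the hypothesis. Once that is established, conciseness pins down $\ell$ exactly via $\ell+1=6-s$, and the identification of each lattice with a row of Table \ref{table:secant} is a routine computation.
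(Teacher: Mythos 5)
Your proof is correct and supplies exactly the ``straightforward case-by-case analysis'' that the paper leaves to the reader; organizing the check by the number of square points and first proving that the hypothesis $e_i-e_j\notin\langle\B\rangle$ forces the three supports to be pairwise disjoint is an efficient way to make the finite check genuinely short, and the resulting bound $\ell+1=6-s\le 6$ recovers the paper's observation that $\ell\le 5$. One point deserves emphasis: your use of the subsection's standing assumption $e_i-e_j\notin\langle\B\rangle$ is not optional. Without it the statement is false --- for instance $\B=\{2e_0,\,e_0+e_1,\,e_1+e_2\}$ is the image of a concise, basepoint-free degree-$2$ Cayley structure with $\dim\B=2$ and $\#\B=3$, yet $\langle\B\rangle$ is the full root lattice $\langle e_0-e_1,\,e_1-e_2\rangle$, which is not in Table \ref{table:secant} --- so the lemma can only be read with that hypothesis in force, as you do. Two small remarks on your aside about the fifth lattice. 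First, your parenthetical is inaccurate: the lattice generated by $(2,-2,0)$ and $(2,-1,-1)$ \emph{is} realized by a degree-two three-point configuration, namely $\{2e_0,\,2e_1,\,e_1+e_2\}$ (its differences from $2e_0$ are $(-2,2,0)$ and $(-2,1,1)$); it simply fails the standing assumption since $2e_1-(e_1+e_2)=e_1-e_2$. This matters mildly for the converse direction implicitly used in the proof of Theorem \ref{thm:ranktwo}, where each table lattice must be realized by some degree-two triple; your explicit configurations cover the first four lattices, and the fifth is covered by the configuration just given (though, as you correctly observe, that case never occurs under the hypotheses of Theorem \ref{thm:ranktwo} anyway, since that lattice contains $e_1-e_2$). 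Second, the fact that your classification produces only four of the five table lattices is not a gap: the conclusion of the lemma is a disjunction, and you have proved a slightly sharper statement. None of this affects the validity of your argument for the lemma as stated.
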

\begin{proof}
Since the degree is two and $\#\B=3$, we obtain $\ell \leq 5$. The claim follows from a straightforward case-by-case analysis.
\end{proof}

\begin{thm}\label{thm:ranktwo}
	Suppose that $\dim \B=2$ and $e_i-e_j\notin \langle \B \rangle$ for all $i\neq j$. Then $L$ does not intersect an unmarked secant if and only if $\langle \B \rangle$ is one of the lattices listed in Table \ref{table:secant}.
\end{thm}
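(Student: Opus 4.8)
The plan is to prove the two implications separately, assembling the preceding lemmas; the hypotheses $\dim \B = 2$ and $e_i - e_j \notin \langle \B \rangle$ are in force throughout. For the forward direction, suppose $L$ meets no unmarked secant. First I would apply Lemma \ref{lemma:deg3} to replace $\B$ by a set $\B'$ with $\#\B' = 3$ and $\langle \B' \rangle = \langle \B \rangle$, realized as the image of a Cayley structure of degree at most three; note that $\B'$ inherits $\dim \B' = 2$ and $e_i - e_j \notin \langle \B' \rangle$, since these depend only on the lattice. Crucially, the solution set $Y$ of \eqref{eqn:V} and the space $\langle \B \rangle^\perp$ also depend only on $\langle \B \rangle$, so by Lemma \ref{lemma:orbit} the hypothesis that $L$ meets no unmarked secant transfers verbatim to $\B'$. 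I would then argue that $C_{\B'}$ is smooth: Lemma \ref{lemma:nomarked} rules out all cusps and all marked nodes, while the transferred hypothesis (via the discussion preceding Lemma \ref{lemma:orbit}) rules out unmarked nodes. Hence $C_{\B'}$ is a smooth rational plane curve.

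The key step is now a genus count. A smooth plane curve of degree $n$ has genus $\binom{n-1}{2}$, so a smooth \emph{rational} plane curve has degree at most $2$. On the other hand, since $\dim \B' = 2$ the three points are affinely independent, so the three defining forms $\ff^u,\ff^v,\ff^w$ are linearly independent and the map to $\PP^2$ is nondegenerate, forcing $\deg C_{\B'} \geq 2$. Therefore $\deg C_{\B'} = 2$, and Lemma \ref{lemma:conic} identifies $\langle \B \rangle = \langle \B' \rangle$ with one of the lattices of Table \ref{table:secant} (up to permutation). For the backward direction, suppose $\langle \B \rangle$ is one of the finitely many lattices of Table \ref{table:secant}. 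By Lemma \ref{lemma:orbit} it suffices to check that for every $z \in Y$ the vectors $z$ and $z^{-1}$ are linearly dependent with $\langle \B \rangle^\perp$; since $Y$ and $\langle \B \rangle^\perp$ depend only on the lattice, this is a finite explicit verification carried out on a complete set of representatives up to coordinate permutation. Conceptually, each such lattice is realized by a degree-two Cayley structure on three points whose associated curve is a smooth conic, which has no secants at all; as the unmarked-secant condition depends only on the difference lattice, this disposes of every $\B$ with that lattice.

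I expect the genuine difficulty to lie almost entirely in the supporting lemmas (especially Lemmas \ref{lemma:saturated} and \ref{lemma:conic}), so at the level of the theorem the main obstacle is organizational: one must verify that the enumeration produced by the forward reduction matches Table \ref{table:secant} exactly, neither omitting a case nor introducing a spurious one, and that the backward checks cover a full set of representatives modulo permutation. The one point requiring real care is the degree reduction in the forward direction. Lemma \ref{lemma:conic} is stated only for degree two, so I would make sure the genus obstruction truly excludes degree three, rather than attempting a separate hands-on classification of degree-three three-point configurations; this is what makes the smoothness argument, and not just a lattice computation, essential to the proof.
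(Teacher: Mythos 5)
Your proposal is correct and matches the paper's proof in all essentials: both directions reduce to three-point configurations via Lemmas \ref{lemma:deg3} and \ref{lemma:conic}, use Lemma \ref{lemma:nomarked} to eliminate marked singularities and tangents, and rest on the genus formula for plane curves --- your forward direction is simply the contrapositive of the paper's (``a rational plane cubic is singular'' versus ``a smooth rational plane curve has degree at most two''). The only steps you gloss are that Lemma \ref{lemma:conic} needs the \emph{Cayley structure}, not the curve, to have degree two --- which follows since $\deg(\rho_{\B',\ff})\cdot \deg C_{\B',\ff}=d\le 3$ forces $d=2$ once $\deg C_{\B',\ff}=2$ --- and that ``affinely independent exponents imply linearly independent forms'' requires a word of genericity (or, more cleanly, note that $L$ meeting no secants makes $\rho_{\B',\ff}$ injective, hence birational, so $\deg C_{\B',\ff}=d\ge 2$); the paper leaves the analogous nondegeneracy implicit as well.
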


\begin{proof}
	The property of $L$ intersecting unmarked secant lines depends only on $\langle \B\rangle $, not on $\B$. If $\langle \B \rangle$ is one of the lattices from Lemma \ref{lemma:conic} listed in Table \ref{table:secant}, then we may replace $\B$ by the image of a Cayley structure with $d=2$ and $\#\B=3$. Then $C_{\B,\ff}$ is a smooth (plane) conic, so $L$ cannot intersect any secant lines at all.
	
	Suppose we are not in one of these cases. Then by Lemma \ref{lemma:deg3}, we may replace 
	$\B$ by the image of a Cayley structure with $d=3$ and $\#\B=3$. Then $C_{\B,\ff}$ is a degree three rational plane curve, hence singular. By Lemma \ref{lemma:nomarked}, $L$ does not intersect any tangents or marked secants.
	Hence, in order for $C_{\B,\ff}$ to be singular, $L$ must intersect an unmarked secant.
\end{proof}

\section{Stratification of the Hilbert Scheme}\label{sec:hilb}
\subsection{Map to the Hilbert Scheme}
As noted in the introduction,
given a polynomial $P(m)\in\QQ[m]$ and a projective variety $X\subset \PP^n$, we let 
	$\Hilb_{P(m)}(X)$
	denote the \emph{Hilbert scheme} parametrizing closed subschemes of $X$ with Hilbert polynomial $P(t)$. In particular, $\Hilb_{d\cdot m +1}(X_\A)$ is the fine moduli space parametrizing closed one-dimensional subschemes of the toric variety $X_\A$ with the same Hilbert polynomial as a smooth degree $d$ rational curve.

	Fix a smooth degree $d$ Cayley structure $\pi$.
	By Theorems \ref{thm:primitive} and \ref{thm:smooth}, for a general point $\bp\in M_{0,\ell+1}$ and any $t\in T_\tau$, the rational curve
	$t\cdot C_{\pi,\ff(\bp)}$ has degree $d$ and is smooth, hence corresponds to a point 
	$[t\cdot C_{\pi,\ff(\bp)}]\in \Hilb_{d\cdot m+1}(X_\A)$.
	We thus obtain a rational map, when $\ell > 1$,
	\[
		M_{0,\ell+1}\times T_\tau \dashrightarrow \Hilb_{dm+1}.
	\]
	When $\ell = 1$, we instead have $T_\tau/T_\pi \dashrightarrow \Hilb_{dm+1}$.
	
Let $Z_\pi^\circ$ denote the image of this map, and $Z_\pi$ its closure in $\Hilb_{dm+1}(X_\A)$.
By Proposition \ref{prop:curvetocayley}, every degree $d$ smooth rational curve in $X_\A$ corresponds to a point in some $Z_\pi^\circ$. By Remark \ref{rem:le1} and Proposition \ref{prop:stabilizer-ell-ge-1}, the above rational maps have finite fibers. In particular, $\dim Z_\pi = \ell-2 + \dim \tau.$

\subsection{Partial Order on Cayley Structures}\label{sec:order}
We now define a partial order on the set of all degree $d$ smooth Cayley structures defined on some face of $\A$.

Given a map
\[
	\phi:\{0,\ldots,\ell\}\to \{0,\ldots,\ell'\},
\]
we obtain a linear map $\ZZ^{\ell+1}\to \ZZ^{\ell'+1}$ sending $e_i$ to $e_{\phi(i)}$. We also denote this linear map by $\phi$. Note that $\phi(\Delta_\ell(d)) \subseteq \Delta_{\ell'}(d)$.
\begin{defn}\label{defn:order}
Let $\tau,\tau'$ be faces of $\A$. Consider degree $d$ Cayley structures $\pi:\tau\to \Delta_\ell(d)$ and 
$\pi':\tau'\to \Delta_{\ell'}(d)$. We say that $\pi'\leq \pi$ if $\tau'$ is a face of $\tau$ and there exists a map
\[
	\phi:\{0,\ldots,\ell\}\to \{0,\ldots,\ell'\}
\]
such that $\pi'=\phi\circ \pi|_{\tau'}$.
\end{defn}
Note that $\phi$ is necessarily surjective, or else $\pi'(\tau')$ would lie in a face of $\Delta_{\ell'}(d)$. Clearly $\pi$ and $\pi'$ are equivalent Cayley structures if and only if $\pi\leq \pi'\leq \pi$.

	\begin{thm}\label{thm:smoothcomp}
		Let $\pi$ and $\pi'$ be smooth Cayley structures on faces of $\A$. Then $\pi'\leq \pi$ if and only if $Z_{\pi'}\subseteq Z_{\pi}$. 
\end{thm}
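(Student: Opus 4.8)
The plan is to prove the two directions separately, translating the geometric containment $Z_{\pi'} \subseteq Z_\pi$ into the combinatorial condition $\pi' \leq \pi$ by analyzing what Cayley structure a general curve in the boundary of $Z_\pi$ gives rise to. The key tool throughout is Proposition \ref{prop:curvetocayley}, which tells us that any rational curve meeting the dense torus of its minimal face $X_{\tau'}$ determines a \emph{unique} (up to equivalence) Cayley structure on $\tau'$. The overall strategy is that a general point of a component of $Z_\pi \setminus Z_\pi^\circ$ corresponds to a curve $C$ whose Cayley structure $\pi'$ must satisfy $\pi' \leq \pi$, and conversely that any $\pi' \leq \pi$ is realized along the boundary.

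For the direction $\pi' \leq \pi \implies Z_{\pi'} \subseteq Z_\pi$, I would construct the curves of $Z_{\pi'}$ as explicit limits of curves in $Z_\pi$. Given the surjection $\phi : \{0,\ldots,\ell\} \to \{0,\ldots,\ell'\}$ with $\pi' = \phi \circ \pi|_{\tau'}$, the idea is to take a one-parameter family within $M_{0,\ell+1}\times T_\tau$ that simultaneously (a) collides the marked points $P_i$ according to the fibers of $\phi$ — so that points sharing a $\phi$-value merge into a single point of $\PP^1$ — and (b) degenerates the torus parameter $t \in T_\tau$ so that the curve limits into the smaller face $X_{\tau'} \subseteq X_\tau$. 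Because $\rho_\pi$ is defined by the monomials $\ff^{\pi(u)}$, merging the points indexed by a fiber $\phi^{-1}(j)$ replaces the product of the corresponding linear forms by a single form raised to the summed exponent, which is exactly the recipe computing $\phi \circ \pi$; and the torus degeneration sends the coordinates $x_u$ for $u \notin \tau'$ to zero. One must check that the limiting curve is precisely $t'' \cdot C_{\pi', \ff(\bp')}$ for suitable $t'', \bp'$, so that it lies in $Z_{\pi'}^\circ$; since $Z_\pi$ is closed and contains these limits, $Z_{\pi'}^\circ \subseteq Z_\pi$ and hence $Z_{\pi'} \subseteq Z_\pi$ by taking closures.

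For the converse $Z_{\pi'} \subseteq Z_\pi \implies \pi' \leq \pi$, take a general point $\eta \in Z_{\pi'}^\circ \subseteq Z_\pi$, corresponding to a smooth degree $d$ curve $C$ whose minimal enveloping face is $\tau'$ (so $C$ meets the dense torus of $X_{\tau'}$) and whose associated Cayley structure is $\pi'$. Since $\eta \in Z_\pi$ and $Z_\pi$ is the closure of $Z_\pi^\circ$, realize $C$ as a flat limit $\lim_{s\to 0} C_s$ of a family of curves $C_s = t_s \cdot C_{\pi,\ff(\bp_s)} \in Z_\pi^\circ$. The plan is to analyze this degeneration on the level of the normalizations $\PP^1 \to C_s$, whose defining forms are the monomials $\ff(\bp_s)^{\pi(u)}$. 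As $s \to 0$, the marked points $P_0(\bp_s),\ldots,P_\ell(\bp_s)$ limit to a possibly-degenerate configuration; grouping the indices $i$ by which limiting point they approach defines a surjection $\phi$ onto the marked points of the limit curve, and comparing the limiting forms against the defining forms of $C$ shows that the Cayley structure $\pi'$ of $C$ equals $\phi \circ \pi|_{\tau'}$. The containment $\tau' \preceq \tau$ follows because the limit curve lies in $X_\tau$, so its minimal face $\tau'$ is a face of $\tau$. By the uniqueness in Proposition \ref{prop:curvetocayley}, this identification of $\pi'$ is forced, giving $\pi' \leq \pi$.

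The main obstacle I expect is the converse direction, specifically making rigorous the passage to the limit of the \emph{normalization maps} and the bookkeeping of how the marked points collide. The subtlety is that the Hilbert-scheme limit is a flat limit of \emph{subschemes}, not of maps, so one must argue that for a general point the limit curve is still reduced and its normalization is the limit of the normalizations $\PP^1 \to C_s$ — in effect controlling the stable-reduction behavior of the family of pointed $\PP^1$'s. One needs to rule out, for general $\eta$, pathologies such as the limiting configuration having fewer than the expected marked points in a way incompatible with a map $\phi$, or the limit curve acquiring an extra component. Here smoothness of $\pi$ and $\pi'$ (hence primitivity, via Theorems \ref{thm:primitive} and \ref{thm:smooth}) should be used to guarantee that the general member of $Z_{\pi'}$ is a genuine smooth degree $d$ curve with a well-defined Cayley structure, and the finiteness of the fibers of $\rho_\pi$ (from Proposition \ref{prop:stabilizer-ell-ge-1} and Remark \ref{rem:le1}) should pin down $\phi$ up to the equivalence built into the partial order. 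I would handle this by working over a smooth curve mapping to $Z_\pi$ through $\eta$, pulling back the universal family, and applying semistable reduction to the family of $(\ell+1)$-pointed $\PP^1$'s to extract $\phi$ cleanly.
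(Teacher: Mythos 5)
Your proposal is correct and follows essentially the same route as the paper in both directions: the forward implication is proved by the same explicit degeneration that collides the marked points according to the fibers of $\phi$ while pushing the curve into $X_{\tau'}$ via a one-parameter subgroup minimized on $\tau'$, and the converse is proved by tracking how the linear factors collide along a general one-parameter family from $Z_\pi^\circ$ to a general point of $Z_{\pi'}^\circ$. The only difference is technical: where you invoke semistable reduction of pointed $\PP^1$'s, the paper sidesteps the stable-reduction worry by choosing the base curve $Y$ through \emph{general} points of both $Z_\pi$ and $Z_{\pi'}$ (so every relevant fiber is a smooth rational curve, the family trivializes after an \'etale base change, and the forms $F_u$ can be factored over a finite extension of the coordinate ring), which delivers the surjection $\phi$ directly from the specialization of the factors.
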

We will prove this theorem in the following subsection.
    \begin{cor}\label{cor:main}
	    The map $\pi \mapsto Z_\pi$ induces a bijection between equivalence classes of maximal smooth primitive degree $d$ Cayley structures and irreducible components of $\Hilb_{dm+1}(X_\A)$ whose general element is a smooth rational curve.
    \end{cor}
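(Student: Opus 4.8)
The plan is to derive Corollary \ref{cor:main} from Theorem \ref{thm:smoothcomp} by a purely formal order-theoretic argument, combined with the finiteness/dimension facts already established. The key input is that the map $\pi \mapsto Z_\pi$ is \emph{monotone} with respect to the partial order $\leq$ and the containment order on closed subsets, and in fact reflects order (this is exactly the ``if and only if'' of Theorem \ref{thm:smoothcomp}). The goal is to show that maximal smooth primitive Cayley structures, up to equivalence, correspond bijectively to irreducible components whose general member is a smooth curve. I would structure the argument around three claims: (i) well-definedness on equivalence classes, (ii) that the map sends maximal elements to irreducible components, and (iii) that every relevant component arises this way, uniquely.

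\textbf{Well-definedness and injectivity.} First I would observe that equivalent Cayley structures give the same $Z_\pi$: by the final remark of \S\ref{sec:order}, $\pi \sim \pi'$ iff $\pi \leq \pi' \leq \pi$, and then Theorem \ref{thm:smoothcomp} gives $Z_{\pi'} \subseteq Z_\pi \subseteq Z_{\pi'}$, so $Z_\pi = Z_{\pi'}$. Conversely, if $Z_\pi = Z_{\pi'}$ for maximal smooth $\pi,\pi'$, Theorem \ref{thm:smoothcomp} forces $\pi \leq \pi' \leq \pi$, so they are equivalent. This simultaneously shows the map descends to equivalence classes and is injective on the classes of maximal smooth Cayley structures. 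One subtlety to check: I should restrict attention to \emph{primitive} smooth Cayley structures, since Theorem \ref{thm:smooth} and the Hilbert-scheme map in \S\ref{sec:hilb} implicitly require primitivity for $C_{\pi,\ff}$ to have the correct degree $d$; I would note that a smooth primitive structure is what defines a point of $\Hilb_{dm+1}$.

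\textbf{Maximal structures give components, and surjectivity.} The core geometric content is that each irreducible component whose general element is a smooth rational curve equals $Z_\pi$ for some smooth primitive $\pi$. Here I would use Proposition \ref{prop:curvetocayley}: every smooth degree $d$ rational curve meeting the dense torus of some $X_\tau$ arises as $t \cdot C_{\pi,\ff(\bp)}$, so its class lies in $Z_\pi^\circ$ for the associated $\pi$. Since the $Z_\pi$ are irreducible closed sets (images of irreducible varieties $M_{0,\ell+1}\times T_\tau$, whose irreducibility I would note), and every smooth rational curve's point lies in some $Z_\pi$, the components of the smooth-curve locus are among the maximal elements of the poset $\{Z_\pi\}$ under inclusion. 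By the order-reflecting property of Theorem \ref{thm:smoothcomp}, $Z_\pi$ is maximal under inclusion precisely when $\pi$ is maximal under $\leq$ (among smooth primitive structures). Thus maximal $\pi$ $\leftrightarrow$ maximal $Z_\pi$ $\leftrightarrow$ irreducible components of the relevant locus, giving the bijection.

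\textbf{The main obstacle} I anticipate is the bookkeeping at the boundary between ``smooth'' and ``smooth primitive,'' and verifying that the set of $Z_\pi$ genuinely has maximal elements covering every component — i.e., that the poset of smooth primitive Cayley structures on faces of $\A$ is finite (so maximal elements exist and components are reached), which follows because there are finitely many faces $\tau$ and finitely many maps $\tau \to \Delta_\ell(d)$ of bounded length (the length $\ell$ is bounded by $\#\tau - 1$ and the image is a finite subset of a fixed dilated simplex). I would also need to confirm that the general element of $Z_\pi$ is indeed a smooth degree $d$ curve, which is immediate from Theorems \ref{thm:primitive} and \ref{thm:smooth} together with the genericity statements there, rather than a genuinely hard step. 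The remaining verification that distinct components are not accidentally identified is handled entirely by the injectivity argument above.
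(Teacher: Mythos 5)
Your proposal is correct and follows essentially the same route as the paper's proof: Proposition \ref{prop:curvetocayley} together with Theorems \ref{thm:primitive} and \ref{thm:smooth} shows every smooth-curve point lies in some $Z_\pi$ with $\pi$ smooth (hence primitive), and the order-reflecting property of Theorem \ref{thm:smoothcomp} identifies maximal $Z_\pi$'s with maximal $\pi$'s and handles injectivity via $Z_\pi = Z_{\pi'} \Leftrightarrow \pi \sim \pi'$. One small slip worth fixing: your bound $\ell \le \#\tau - 1$ on the length is false for $d>1$ (cf.\ Example \ref{ex:simplex}, where the maximal structure has length $\#\A\cdot d - 1$); the correct bound is $\ell < \#\tau\cdot d$, which still yields the finiteness you need.
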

\begin{proof}
	Consider any point $z$ of $\Hilb_{dm+1}(X_\A)$ corresponding to a smooth curve $C$. Then $C$ is rational of degree $d$. By Proposition \ref{prop:curvetocayley}, $C$ is a torus translate of some curve $C_{\pi,\ff(\bp)}$, where $\pi$ is a Cayley structure. It follows from Theorems \ref{thm:primitive} and \ref{thm:smooth} that $\pi$ is smooth (and thus primitive). Thus, the point $z$ is contained in some $Z_\pi$.

	The irreducible components of $\Hilb_{dm+1}(X_\A)$ thus correspond to the maximal elements of $\{Z_{\pi}\}$, as $\pi$ ranges over smooth degree $d$ Cayley structures. But by Theorem \ref{thm:smoothcomp}, these maximal elements are given by exactly those $Z_\pi$ where $\pi$ is maximal among all smooth degree $d$ Cayley structures. Since $Z_\pi=Z_{\pi'}$ if and only if $\pi$ and $\pi'$ are equivalent, we obtain the desired bijection.
\end{proof}

\begin{ex}\label{ex:simplex}
Let $\A = \Delta_\ell(1)$, the unimodular simplex. Then up to equivalence $\A$ has a unique maximal Cayley structure of degree $d$, namely $\pi : \A \to \Delta_{(\ell+1)d-1}(d)$ given by
\[e_j \mapsto e_{jd} + e_{jd+1} + \cdots + e_{jd+d-1} \qquad j \in \{0, \ldots, \ell\}. \qedhere\]
\end{ex}

\begin{ex}\label{ex:fanohilb}
	We continue with the set $\A$ from Example \ref{ex:fano} and pictured in Figure \ref{fig:introex}. As in Example \ref{ex:cayley}, consider the face 
	$\tau=\{(1,0,0),(0,-1,0),(0,0,1)\}$. We may consider the length $1$ degree $2$ Cayley structure $\pi'''$ on $\tau$ sending $(1,0,0)$ to $e_0+e_1$, $(0,-1,0)$ to $2e_1$, and $(0,0,1)$ to $2e_0$. This is pictured in Figure \ref{fig:cayley2}.

	The Cayley structure $\pi'''$ is not maximal: letting $\pi'$ and $\pi''$ be as in Example \ref{ex:cayley}, we have $\pi''' \leq \pi'$ and $\pi'''\leq  \pi''$. Indeed, we obtain $\pi'''$ from $\pi'$ by restricting it from $\A$ to the face $\tau$. On the other hand, we obtain $\pi'''$ from $\pi''$ by composing $\pi''$ with the map sending $e_0,e_1,e_2$ to $e_0$ and  $e_3,e_4,e_5$ to $e_1$.
	By Theorem \ref{thm:smoothcomp}, we see that the conics corresponding to the Cayley structure $\pi'''$ may be deformed into two different kinds of conics: those corresponding to $\pi'$ (certain conics intersecting the open orbit of $X_\A$ that meet the boundary in exactly two points) and those corresponding to $\pi''$ (conics contained in a boundary stratum of $X_\A$ isomorphic to $\PP^2$ that meet the boundary of this $\PP^2$ in six points).

	It is straightforward to determine all the maximal Cayley structures (up to equivalence) on $\A$ and its faces. Because of the geometry of $\A$, any Cayley structure defined on all of $\A$ must have length one; the only possibilities are exactly the nine described in Example \ref{ex:fano}. 
	By Corollary \ref{cor:main}, this gives us nine components of the Hilbert scheme of conics. Each of these components has dimension
	\[
\ell-2+\dim \A=2.
	\]

	On the other hand, any facet of $\A$ is a unimodular $2$-simplex, which has a unique maximal Cayley structure (cf. Example \ref{ex:simplex}). The resulting $12$ components of the Hilbert scheme of conics all have dimension
	\[
\ell-2+\dim \tau=5-2+2=5.
	\]
	In fact, each of these components is parametrizing conics in one of the $\PP^2$-boundary strata, so it is just a copy of $\PP^5$.
\end{ex}
\begin{figure}\tiny{
		\begin{center}
\begin{tikzpicture}[scale=1.2]
\draw[fill] (0,-1) circle [radius=0.04] node[below right] {$2e_1$};
\draw[fill] (1,0) circle [radius=0.04] node[below right] {$e_0+e_1$};
\draw[fill] (.5,1.3) circle [radius=0.04] node[above] {$2e_0$};
\draw[fill,lightgray] (1,0,0) -- (0,-1,0) -- (.5,1.3) ;
\draw (0,1) -- (-1,0) -- (-1,-1) -- (0,-1) -- (1,0) -- (1,1) -- (.5,1.3) -- (1,0) -- (.5,1.3) -- (0,-1) -- (.5,1.3) -- (-1,-1) -- (.5,1.3) -- (-1,0) -- (.5,1.3) -- (0,1);
\draw (-1,-1) -- (-.5,-1.3) -- (0,-1);
\draw[dashed] (0,1) -- (1,1) -- (-.5,-1.3);
\draw[dashed] (1,0) -- (-.5,-1.3);
\draw[dashed] (-1,0) -- (-.5,-1.3);
\draw[dashed] (0,1) -- (-.5,-1.3);
\end{tikzpicture}
\end{center}}
\caption{A non-maximal Cayley structure (Example \ref{ex:fanohilb})}\label{fig:cayley2}
\end{figure}
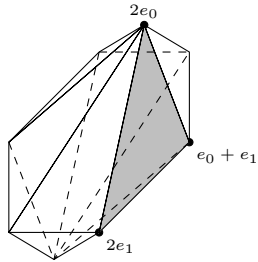

The set of Cayley structures on $\tau$ of degree $d$ is always finite, since the length $\ell$ is bounded, e.g. $\ell < |\tau| \cdot d$. As such it is possible to find the maximal Cayley structures on $\A$ by enumerating all possible Cayley structures. It would be interesting to have a direct characterization of maximality:

\begin{question}
Is there a combinatorial criterion for maximality of a Cayley structure $\pi : \tau \to \Delta_\ell(d)$ on $\A$?
\end{question}

\subsection{Smooth Degenerations}
\begin{proof}[{Proof of Theorem \ref{thm:smoothcomp}}]
	We first show that if $\pi'\leq \pi$, then $Z_{\pi'}\subseteq Z_\pi$. Let $\phi$ be as in Definition \ref{defn:order}.
	Consider some $\ff'$ such that $C_{\pi',\ff'}$ is smooth of degree $d$. Let $v\in \Hom(M,\ZZ)$ be such that the face of $\tau$ on which $v$ is minimal is exactly $\tau'$. We may view $v$ as a one-parameter subgroup of $T=\spec \KK[M]$, giving us a map $v:\KK^*\to T$. Let $g_{0},\ldots,g_{\ell}\in\KK[y_0,y_1]$ be general linear forms.
For $t\in \Aff^1$, set 
\[
	\ff(t):=(f'_{\phi(0)}+t\cdot g_{0},\ldots,f'_{\phi(\ell)}+t\cdot g_{\ell}).
\]
	We thus obtain a morphism
	\begin{align*}
		\Aff^1&\to Z_{\pi}\\
		t&\mapsto [v(t)\cdot C_{\pi,\ff(t)}].
	\end{align*}
	At the point $t=0$, the corresponding curve is exactly $C_{\pi',\ff'}$. Hence, $[C_{\pi',\ff'}]$ is in the closure of $Z_\pi$. Since $Z_\pi$ is $T$-invariant, it follows that all of $Z_{\pi'}$ is contained in $Z_\pi$.

	We now show instead that $Z_{\pi'}\subseteq Z_\pi$ implies that $\pi'\leq \pi$. 
		We first note that if $Z_{\pi'}$ is in the closure of $Z_{\pi}$, clearly $\tau'$ must be contained in $\tau$, and hence a face of it. Now, take a curve $Y$ in $\Hilb_{dm+1}$ passing through a general point of $Z_{\pi}$ and a general point of $Z_{\pi'}$, say at $\eta\in Y$.
After pulling back along an \'etale map, we may assume that the family over $Y$ is a trivial family of rational curves over an irreducible affine curve $Y=\spec R$, see e.g. \cite[Exercise 25.2]{deftheory}.
Thus, we have a map  
\[
\PP^1\times Y\to X_\tau\times Y
\]
given by degree $d$ forms $F_u\in R[y_0,y_1]_d$ for each $u\in \tau$. After possibly further pullback along a finite map, we may assume that each $F_u$ factors as a product of linear factors. 

From Proposition \ref{prop:curvetocayley}, the factors of the $\{F_u\}$ at a general point determine a Cayley structure $\tau\to \Delta_{\ell+1}$, which is exactly the Cayley structure $\pi$. In particular, the $\{F_u\}$ have $\ell+1$ distinct factors $f_0,\ldots,f_\ell\in R[y_0,y_1]_1$ up to scaling by non-zero elements of the field of fractions of $R$.
Without loss of generality, we will order them so that the factors of the $F_u$ for $u\in \tau'$ are $f_0, \ldots, f_j$ for some $j$.

Similarly, the factors of the $\{F_u(\eta)\}$ for $u\in\tau'$ determine a Cayley structure $\tau'\to \Delta_{\ell'+1}$ (and in particular, $F_u(\eta)=0$ for $u\notin \tau'$). After enumerating the factors of the $\{F_u(\eta)\}_{u\in\tau'}$ up to scaling, the map $f_i\mapsto f_i(\eta)$ for $i\leq j$ induces a surjective map $\{0,\ldots,j\}\to \{0,\ldots,\ell'\}$. We may extend this arbitrarily to a surjective map $\phi:\{0,\ldots,\ell\}\to \{0,\ldots,\ell'\}$. 
It follows from the construction of $\phi$ that $\pi'=\phi\circ\pi$, and hence $\pi'\leq \pi$.
\end{proof}

\section{Torus Orbits}\label{sec:orbits}
\subsection{Limiting Cycles}
Let $\Chow_d(X_\A)$ denote the \emph{Chow variety} parametrizing degree $d$ one-cycles in $X_\A$. For a curve $C\subseteq X_\A$, we denote the corresponding
one-cycle by $\{C\}$.
Given a primitive Cayley structure $\pi$ of degree $d$, we wish to describe the closure in $\Chow_d(X_\A)$  
of the torus orbit
\[
	T\cdot\{C_{\pi,\ff}\}
\]
for any choice of $\ff$ with whose entries have distinct roots. This orbit closure is a (potentially non-normal) complete toric variety.
In order to understand this orbit closure, we will describe the limit of $\{C_{\pi,\ff}\}$ under a one-parameter subgroup.

We first introduce a bit of notation. We refer the reader to \cite[\S3.1]{cls} for the correspondence between normal toric varieties and fans.

Let $\tau$ be a face of $\A$, let $V=\Hom(\langle \tau \rangle,\RR)$ and consider any $v\in V$. We denote by $\tau^v\prec \tau$ the face on which $v$ is minimal.\footnote{To make this precise, we may view $v$ as a linear function on $\tau$ by fixing an element $u'\in \tau$ and defining the value of $v$ on $u\in \tau$ to be $v(u-u')$. The function $v$ depends on the choice $u'$, but only up to a constant term, and hence $\tau^v$ is well-defined.}
	The dependence of $\tau^v$ on $v$ can be understood combinatorially. Let $\Sigma$ be the inner normal fan of $\tau$ viewed as a subset of $V$. 
Then $\tau^v=\tau^{v'}$ if and only if $v$ and $v'$ belong to the relative interior of the same cone of $\Sigma$. Accordingly, for any cone $\sigma\in \Sigma$, we denote by $\tau^\sigma$ the face $\tau^v$ where $v$ is any element in the relative interior of $\sigma$.

Let $\pi:\tau\to \Delta_\ell(d)$ be a primitive Cayley structure and let $0 \leq i \leq \ell$.
\begin{defn}\label{defn:iface}
	A face $\tau'\preceq \tau$ is an \emph{$i$-face} if $e_i^*$ is non-constant on $\pi(\tau')$. Equivalently, $\pi^*(e_i^*) \in V$ is not in the linear span of the cone corresponding to $\tau'$.
\end{defn}
\noindent We denote by $\Sigma_i$ the subfan of $\Sigma$ consisting of the cones corresponding to $i$-faces. Finally, for $v\in V$ let $\Sigma_i^v$ consist of those cones in $\Sigma_i$ such that the ray $v+\RR_{>0} \pi^*(e_i^*)$ intersects their relative interior.

\begin{rem} \label{rem:minimal-i-v-face}
A minimal $i$-face is always of dimension $1$ (if $e_i^*$ is nonconstant on $\pi(\tau')$, it is nonconstant on some edge of $\tau'$). Equivalently, the maximal cones of $\Sigma_i$ are of codimension $1$. Accordingly, for general $v \in V$, $\Sigma_i^v$ consists only of cones of codimension $1$ and corresponds only to edges of $\tau$. Note that $\Sigma^i_v$ is not a fan, only a collection of cones.
\end{rem}

We will use the restrictions of $\pi$ to $\tau^v$ and to each cone of $\Sigma_i^v$ to describe the limit of $\{C_{\pi, \ff}\}$, as follows.

For any $i=0,\ldots,\ell$, let $\kappa_i:\ZZ^{\ell+1}\to\ZZ^2$ be the linear map defined by
\[
	\kappa_i(e_j)=\begin{cases}
e_0& j=i\\
e_1& j\neq i
	\end{cases}.
\]
Let $v\in \Hom(\langle \tau \rangle,\ZZ)$. First, if $\pi|_{\tau^v}$ is non-constant, let $m^v$ be the multiplicity of $\Res(\pi|_{\tau^v})$ and let
\[\pi^v=\Red(\Res(\pi|_{\tau^v}))\]
be a basepoint-free weak Cayley structure.

For any basepoint $i$ of $\pi|_{\tau^v}$ and any $\sigma\in\Sigma_i$, let $m_i^\sigma$ be the multiplicity of $\Res(\kappa_i\circ \pi|_{\tau^\sigma}))$ and let
\[\pi_i^\sigma :=\Red(\Res(\kappa_i\circ \pi|_{\tau^\sigma})) : \tau^\sigma \to \Delta_1(d/m_i^\sigma).\]
Then $\pi_i^\sigma$ is a Cayley structure of length one.

Finally, for linear forms $f, g$, set
\[
f\wedge g=f(1,0)g(0,1)-g(1,0)f(0,1).
\]
Note that $f \wedge g = 0$ if and only if $f$ and $g$ have a common root. Viewing $\pi^*(e_j^*) \in N_\tau \cong \Hom(\KK^*, T_\tau)$, we define
\[
t_i = \prod_j \pi^*(e_j^*)(f_j \wedge f_i).
\]

\begin{thm}\label{thm:limit}
Let $\pi:\tau\to\Delta_\ell(d)$ be a primitive degree $d$ Cayley structure.
	For general choice of $\ff$,
	the limit under $v\in N_\tau$ of the degree $d$ cycle $\{C_{\pi,\ff}\}$ is
\begin{align*}
	m^v\cdot \{C_{\pi^v,\ff}\}+\sum_{i,\sigma\in\Sigma_i^v} m_i^\sigma\cdot \{t_i\cdot C_{\pi_i^\sigma}\}\qquad&\textrm{if }\pi|_{\tau^v}\textrm{ non-constant;}\\
	\sum_{i,\sigma\in\Sigma_i^v} m_i^\sigma\cdot \{t_i\cdot C_{\pi_i^\sigma}\}\qquad&\textrm{if }\pi|_{\tau^v}\textrm{constant}.
\end{align*}
The sum is taken over all basepoints $i$ of $\pi|_{\tau^v}$.
Furthermore, if we assume that $\pi|_{\tau^v}$ is constant, the claim is true for any $\ff$ whose entries have distinct roots such that $\deg C_{\pi,\ff}=d$.
\end{thm}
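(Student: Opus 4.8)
The plan is to compute the limiting cycle directly from the parametrization. Writing $v(s)$ for the one-parameter subgroup attached to $v\in N_\tau$, the translated curve $v(s)\cdot C_{\pi,\ff}$ is the image of the map $y\mapsto (s^{\langle v,u\rangle}\,\ff^{\pi(u)}(y))_{u\in\tau}$, where $\langle v,u\rangle$ is well-defined up to a global constant. First I would normalize by $s^{\mu}$ with $\mu=\min_{u\in\tau}\langle v,u\rangle$ (attained exactly on $\tau^v$) and then study the family of maps $\PP^1\to X_\A$ as $s\to 0$. The limit cycle in $\Chow_d(X_\A)$ is the pushforward of the special fibre of this family after extending it across $s=0$; concretely I would build that extension by hand through explicit $s$-dependent rescalings of the coordinate on $\PP^1$ (the elementary form of stable reduction of maps). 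The overall strategy is then: (i) identify each component of the limit together with the degree of the map onto it, which is its multiplicity; and (ii) certify that nothing is missing by checking that the degrees sum to $d$.

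For the central component, away from the roots $P_0,\ldots,P_\ell$ the normalized coordinate indexed by $u$ is $s^{\langle v,u\rangle-\mu}\,\ff^{\pi(u)}$, so as $s\to 0$ the coordinates with $u\in\tau^v$ survive and all others vanish. The limiting map is thus $\rho_{\pi|_{\tau^v},\ff}=\rho_{\Res(\pi|_{\tau^v}),\ff}$, which by definition of $\pi^v=\Red(\Res(\pi|_{\tau^v}))$ is an $m^v$-fold cover of $C_{\pi^v,\ff}$ (using the multiplicity bookkeeping of Theorem \ref{thm:primitive}). This produces the term $m^v\{C_{\pi^v,\ff}\}$, which collapses to a point and disappears precisely when $\pi|_{\tau^v}$ is constant.

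The heart of the argument is the analysis at the basepoints, which I expect to be the main obstacle. Fix a basepoint $i$ of $\pi|_{\tau^v}$, put $P_i=(1:0)$ with local coordinate $\epsilon$, and zoom in by setting $\epsilon=s^{c}w$. The order of vanishing in $s$ of coordinate $u$ becomes $\langle v,u\rangle+c\,\pi(u)_i$ up to a constant, and since $\langle\pi^*(e_i^*),u\rangle=\pi(u)_i$ up to a constant, this equals $\langle v+c\,\pi^*(e_i^*),u\rangle$ up to a constant. Hence at rate $c$ the surviving coordinates are those of the face $\tau^\sigma$ cut out by $v+c\,\pi^*(e_i^*)$, and the rates producing a nonconstant limit are exactly the cones $\sigma\in\Sigma_i^v$ met by the ray $v+\RR_{>0}\pi^*(e_i^*)$ (which for general $v$ are the codimension-one cones of Remark \ref{rem:minimal-i-v-face}; intermediate rates give contracted point-bubbles). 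On the piece for $\sigma$ the limiting map in $w$ sends $u\mapsto w^{\pi(u)_i}\prod_{j\ne i}f_j(P_i)^{\pi(u)_j}$; this is $\rho$ of $\kappa_i\circ\pi|_{\tau^\sigma}$ translated by $\prod_{j\ne i}\pi^*(e_j^*)(f_j(P_i))$. Since $f_j(P_i)=f_j\wedge f_i$, the translate is exactly $t_i$, and the image is $t_i\cdot C_{\pi_i^\sigma}$ with the map an $m_i^\sigma$-fold cover, giving the boundary term for $(i,\sigma)$. The delicate points here are pinning down which rates $c$ yield curves versus contractions, and matching the constants $f_j(P_i)$ to the prescribed torus element $t_i$.

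Finally I would establish completeness and the last assertion. For general $\ff$ the secant/tangent analysis of Lemmas \ref{lemma:marked}--\ref{lemma:unmarked} rules out extra components hiding at unmarked points, so the limit is supported on the components found above. A telescoping degree count then forces equality: the central component has degree $d-\sum_i c_i$ with $c_i=\min_{u\in\tau^v}\pi(u)_i$, while along each ray $v+\RR_{>0}\pi^*(e_i^*)$ the quantity $\min_{\tau^\sigma}\pi(\cdot)_i$ decreases from $c_i$ to $0$, each crossed cone $\sigma\in\Sigma_i^v$ contributing the drop $\max_{\tau^\sigma}\pi(\cdot)_i-\min_{\tau^\sigma}\pi(\cdot)_i=\deg\Res(\kappa_i\circ\pi|_{\tau^\sigma})=m_i^\sigma\cdot\deg C_{\pi_i^\sigma}$; summing over $\sigma$ recovers $c_i$, so the total degree is $d$. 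For the final statement, when $\pi|_{\tau^v}$ is constant the central term is absent and every limiting component is a length-one (toric) curve whose image and multiplicity are read off combinatorially from the edges $\tau^\sigma$ and depend on $\ff$ only through $f_j(P_i)\ne 0$, i.e.\ through distinctness of the roots. The hypothesis $\deg C_{\pi,\ff}=d$ guarantees the limit again has degree $d$, and the same telescoping identity forces it to equal the boundary sum with no spurious components, so genericity of $\ff$ is not needed in this case.
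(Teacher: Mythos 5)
Your proposal is correct and follows the same core strategy as the paper: peel off the central component $m^v\{C_{\pi^v,\ff}\}$ by normalizing against the minimum of $\langle v,\cdot\rangle$, then analyze each basepoint $P_i$ by rescaling the local coordinate at rate $c$, identify the rates producing non-contracted bubbles with the cones of $\Sigma_i^v$, and read off the torus translate $t_i$ from the values $f_j(P_i)=f_j\wedge f_i$. The paper implements your ``$\epsilon=s^cw$'' rescalings rigorously as an iterated blowup of $\PP^1\times\Aff^1$ at the points $V(f_k^{(j)},z)$, with the bookkeeping $\lambda^{(j+1)}=\lambda^{(j)}+e_k^*\circ\pi$ recording exactly your shift $v\mapsto v+c\,\pi^*(e_i^*)$. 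Where you genuinely diverge is in certifying that nothing is missing: the paper proves this directly, via Lemma \ref{lemma:basepoint} (after passing to a sufficiently divisible multiple of $v$, consecutive exceptional curves share a minimizing face, so the indeterminacy is confined to the single point $(1:0)$ on each $E_j$ and the blowup process terminates with all basepoints resolved), whereas you certify completeness by the telescoping degree count $\sum_{\sigma\in\Sigma_i^v}\bigl(\max_{\tau^\sigma}\pi(\cdot)_i-\min_{\tau^\sigma}\pi(\cdot)_i\bigr)=c_i$, which forces the exhibited effective subcycle of degree $d$ to equal the full limit cycle. This is a clean and valid alternative --- it only requires lower bounds on the multiplicities, which your local analysis supplies since each exceptional curve appears reduced in the special fibre --- and it makes the appeal to Lemmas \ref{lemma:marked} and \ref{lemma:unmarked} unnecessary (those lemmas concern singularities of $C_{\pi,\ff}$ itself rather than limiting behaviour, so they are not really the right tool here, but the degree count alone closes the gap). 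Your treatment of the final assertion matches the paper's: genericity of $\ff$ enters only through the central component via Theorem \ref{thm:primitive}, and the boundary terms are length-one Cayley structures for which no genericity is needed.
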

We will prove this theorem in \S\ref{sec:blowup}.

\begin{ex}\label{ex:limit}
	We apply Theorem \ref{thm:limit} to compute the limit of $C_{\pi,\ff}$ under the one-parameter subgroup $v=(-1,-1,-1)$, where $\pi$ is the Cayley structure from Example \ref{ex:cayley}.
	The face $\tau^v$ is just the point $(1,1,0)$, so $\pi|_{\tau^v}=2e_0$ is constant and has $0$ as a basepoint. Note that $\pi^*(e_0^*)=(0,1,0)$.

	The cones of $\Sigma_0^v$ are exactly the rays generated by $(-1,0,-1)$ and $(-1,1,-1)$; the corresponding faces of $\A$ are the convex hulls of 
	$\{(1,1,0),(0,0,1),(1,0,0)\}$ and $\{(0,0,1),(1,0,0),(0,-1,0)\}$. These faces, along with the images of the corresponding Cayley structures $\pi_i^\sigma$ are pictured in Figure \ref{fig:limit}. Note that for the first face we have permuted the roles of $e_0$ and $e_1$ for convenience. Both of these Cayley structures are primitive of degree one, hence all $m_i^\sigma=1$. Furthermore, since $\pi$ already had length $1$, $t_0=1$. 

	We see that the limit of $C_{\pi,\ff}$ is the union of two distinct lines intersecting in a point. Each of these lines is contained in one of the $\PP^2$ boundary strata depicted in Figure \ref{fig:limit}; the intersection of these lines is not a torus fixed point.
\end{ex}

\begin{figure}\tiny{
		\begin{center}
\begin{tikzpicture}[scale=1.2]
\draw[fill] (0,-1) circle [radius=0.04] node[below right] {$e_1$};
\draw[fill] (1,1) circle [radius=0.04] node[above right] {$e_1$};
\draw[fill] (1,0) circle [radius=0.04] node[below right] {$e_0$};
\draw[fill] (.5,1.3) circle [radius=0.04] node[above] {$e_0$};
\draw[blue,pattern=north west lines,pattern color=blue] (1,0,0) -- (1,1,0) -- (.5,1.3) ;
\draw[blue,pattern=horizontal lines,pattern color=red] (1,0,0) -- (0,-1,0) -- (.5,1.3) ;
\draw (0,1) -- (-1,0) -- (-1,-1) -- (0,-1) -- (1,0) -- (1,1) -- (.5,1.3) -- (1,0) -- (.5,1.3) -- (0,-1) -- (.5,1.3) -- (-1,-1) -- (.5,1.3) -- (-1,0) -- (.5,1.3) -- (0,1);
\draw (-1,-1) -- (-.5,-1.3) -- (0,-1);
\draw[dashed] (0,1) -- (1,1) -- (-.5,-1.3);
\draw[dashed] (1,0) -- (-.5,-1.3);
\draw[dashed] (-1,0) -- (-.5,-1.3);
\draw[dashed] (0,1) -- (-.5,-1.3);
\end{tikzpicture}
\end{center}}
\caption{Faces for limiting Cayley structures (Example \ref{ex:limit}).}\label{fig:limit}
\end{figure}
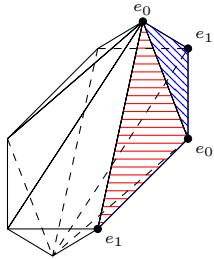

\subsection{Orbits in $\Chow_d(X_\A)$}\label{sec:orbit}
We will now describe the closure of
\[
	T\cdot \{C_{\pi,\ff}\}
\]
in $\Chow_d(X_\A)$.

Let $\varepsilon$ be a minimal $i$-face of $\tau$. (Recall that $\dim \varepsilon = 1$, cf. Remark \ref{rem:minimal-i-v-face}). We define its $i$-multiplicity to be 
\begin{align*}
	\mult_i(\varepsilon):=\max_{u,u'\in \varepsilon} (\pi(u)_i-\pi(u')_i)/L(\varepsilon),
\end{align*}
where $L(\varepsilon)$ is the length of $\varepsilon$ with respect to the lattice $\langle \varepsilon \rangle$. This is, equivalently, the multiplicity of $\Res(\kappa_i \circ \pi|_\varepsilon)$. If $\varepsilon$ is not minimal, we set $\mult_i(\varepsilon)=0$.

Let $Z(\tau)$ be the group of formal integral linear combinations of faces of $\tau$.
Define a map
\begin{align*}
\phi:V&\to Z(\tau)\\
\phi(v)&=\sum_{\substack{i,\sigma\in \Sigma_i\\
	v\in \sigma-\RR_{\geq 0} \pi^*(e_i^*)}} \mult_i(\tau^\sigma)\cdot \tau^\sigma.
\end{align*}
Thus, $\tau^\sigma$ contributes to $\phi(v)$ if the ray $v + \RR_{\geq 0} \pi^*(e_i^*)$ intersects $\sigma$ and $\mult_i(\tau^\sigma) \ne 0$ (i.e. $\tau^\sigma$ is a minimal $i$-face).
\begin{defn}\label{defn:sigmapi}
Let $\Sigma_\pi$ consist of the closures in $V$ of those full-dimensional regions on which $\phi$ is constant, along with all faces of these sets.
\end{defn}

Recall the definition of the lattice $N_{\pi,\ff}$ along with the map $N_\tau\to N_{\pi,\ff}$ from Definition \ref{defn:stablattice}.

\begin{thm}\label{thm:fan}
	Let $\pi$ be a primitive degree $d$ Cayley structure. Let $\ff$ have distinct roots and be such that $C_{\pi,\ff}$ has degree $d$. Let $Z$ be the normalization of
	\[\overline{T\cdot \{C_{\pi,\ff}\}}\subseteq \Chow_d(X_\A).\]
\begin{enumerate}
	\item If $\ell>1$, the set $\Sigma_\pi$ is a fan. If $\ell=1$, the set $\Sigma_\pi$ modulo its one-dimensional lineality space is a fan; the lineality space is generated by the image of $\pi^*$.
	\item If $\ell>1$ and $\ff$ is sufficiently general, then $Z$ is the toric variety associated to the fan $\Sigma_\pi$ with respect to the lattice $N_\tau$.
	\item More generally, for $\ell$ arbitrary and any $\ff$ satisfying the above hypotheses, $Z$ is the toric variety associated to the image of $\Sigma_\pi$ in $N_{\pi,\ff}\otimes \RR$ with respect to the lattice $N_{\pi,\ff}$. 
\end{enumerate}
\end{thm}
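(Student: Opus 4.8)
The plan is to identify $Z$ as a torus orbit closure and read off its fan from the limit computation of Theorem \ref{thm:limit}. Since the curve $C_{\pi,\ff}$ lies in $X_\tau$, the $T$-action on $\Chow_d(X_\A)$ factors through $T_\tau$, and the stabilizer of the point $\{C_{\pi,\ff}\}$ inside $T_\tau$ is exactly $\Stab_{\pi,\ff}$ (finite for $\ell>1$ by Proposition \ref{prop:stabilizer-ell-ge-1}, equal to $T_\pi$ for $\ell=1$ by Remark \ref{rem:le1}). Hence $Z_0:=\overline{T\cdot\{C_{\pi,\ff}\}}$ is a complete (possibly non-normal) projective toric variety whose dense torus is $\overline{T}_{\pi,\ff}=T_\tau/\Stab_{\pi,\ff}$, with cocharacter lattice $N_{\pi,\ff}$. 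I would first invoke the standard description of the normalization of such an orbit closure: embedding $\Chow_d(X_\A)$ $T$-equivariantly in a projective space $\PP(W)$ via Chow forms, $Z_0$ is the orbit closure of a single point, and its normalization $Z$ is the toric variety of the normal fan of the weight polytope of that point in $N_{\pi,\ff}\otimes\RR$ (cf. the fan--toric variety correspondence of \cite[\S3.1]{cls}). Concretely, this produces a complete fan $\Sigma_Z$ in $N_{\pi,\ff}\otimes\RR$ whose cones biject with the $T$-orbits of $Z_0$, characterized by the property that cocharacters $v,v'$ lie in the relative interior of the same cone if and only if $\lim_{t\to 0}v(t)\cdot\{C_{\pi,\ff}\}$ and $\lim_{t\to 0}v'(t)\cdot\{C_{\pi,\ff}\}$ lie in the same $T$-orbit.

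Next I would feed Theorem \ref{thm:limit} into this description. For generic $v$, the minimal face $\tau^v$ is a vertex of $\tau$, so $\pi|_{\tau^v}$ is constant and the limit is the cycle $\sum_{i,\sigma\in\Sigma_i^v} m_i^\sigma\{t_i\cdot C_{\pi_i^\sigma}\}$. Each $C_{\pi_i^\sigma}$ is a length-one Cayley curve on the edge $\tau^\sigma$, i.e. the reduced torus-invariant line $X_{\tau^\sigma}$, and since $T$ is abelian the translate $t_i\cdot X_{\tau^\sigma}$ equals $X_{\tau^\sigma}$ as a cycle; thus the limit is a $T$-\emph{fixed} point of $Z_0$. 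The key claim is that this fixed cycle is recovered from the datum $\phi(v)$ of Definition \ref{defn:sigmapi}: the condition $\sigma\in\Sigma_i^v$ is precisely $v\in\sigma-\RR_{\geq 0}\pi^*(e_i^*)$, and $m_i^\sigma=\mult_i(\tau^\sigma)$, so $\phi(v)=\sum\mult_i(\tau^\sigma)\cdot\tau^\sigma$ records exactly the support-with-multiplicity of the limit cycle. The one point needing care is that a single edge $\tau^\sigma$ may be a minimal $i$-face for several indices $i$; but then all the corresponding components coincide with $X_{\tau^\sigma}$, so collapsing them into one summand of $\phi$ loses no information, and their multiplicities add correctly. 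Because the translates $t_i$ depend only on $\ff$ and not on $v$, two generic $v,v'$ therefore yield the same limit cycle if and only if $\phi(v)=\phi(v')$. I would also record here that for generic $v$ the relevant faces $\tau^\sigma$ are edges and $\pi|_{\tau^v}$ is constant, so that the relevant part of Theorem \ref{thm:limit} applies for \emph{any} admissible $\ff$ — this is what lets part (3) avoid assuming $\ff$ general.

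Combining these two steps, the relative interiors of the maximal cones of the genuine complete fan $\Sigma_Z$ are exactly the top-dimensional regions on which $\phi$ is constant. This simultaneously yields part (1): those regions are convex polyhedral cones, and $\Sigma_\pi$, being their closures together with all faces, coincides with $\Sigma_Z$ and is a fan (a complete fan is determined by its maximal cones and their faces). For $\ell=1$, the one-parameter subgroups of $T_\pi$ fix $\{C_{\pi,\ff}\}$ by Remark \ref{rem:le1}, so $\phi$ is constant along the line $\Hom(\GG_m,T_\pi)\otimes\RR$, which is the lineality space modulo which $\Sigma_\pi$ becomes a fan. Part (3) is then immediate: since the acting torus is $\overline{T}_{\pi,\ff}$ with cocharacter lattice $N_{\pi,\ff}$, the variety $Z$ is the toric variety of the image of $\Sigma_\pi$ in $N_{\pi,\ff}\otimes\RR$ with respect to $N_{\pi,\ff}$. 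Part (2) follows by specialization: when $\ell>1$ and $\ff$ is sufficiently general, Corollary \ref{cor:stab} gives $\Stab_{\pi,\ff}$ trivial, whence $N_{\pi,\ff}=N_\tau$ and the fan is $\Sigma_\pi$ with respect to $N_\tau$.

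The main obstacle I anticipate is making the first step precise: obtaining a clean $T$-equivariant handle on $\Chow_d(X_\A)$ (the Chow-form embedding and the identification of the faithfully acting quotient with $\overline{T}_{\pi,\ff}$), and establishing rigorously that $\phi$ is a complete invariant of the generic limits. The latter amounts to ruling out accidental coincidences of distinct $T$-fixed limit cycles having equal $\phi$, and conversely confirming that distinct values of $\phi$ force the limits into distinct $T$-orbits; because the generic limits are $T$-fixed, this reduces to the cycle-level identification carried out above. A secondary subtlety, handled via Definition \ref{defn:stablattice} and Corollary \ref{cor:stab}, is the bookkeeping of the lattices $N_\tau\supseteq N_{\pi,\ff}$ and of the lineality space in the case $\ell=1$.
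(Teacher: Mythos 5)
Your proposal is correct and follows essentially the same route as the paper: the paper likewise reduces (2) to (3) via Corollary \ref{cor:stab}, isolates the statement that for generic $v$ the limit cycle is $\sum \mult_i(\tau^\sigma)\{X_{\tau^\sigma}\}$ and hence is determined by $\phi(v)$ (its Lemma \ref{lemma:limit}), and then matches the full-dimensional regions of constancy of $\phi$ with the maximal cones of the true fan of the orbit closure using the standard limit characterization \cite[Proposition 3.2.2]{cls}. The only quibble is that $t_i\cdot X_{\tau^\sigma}=X_{\tau^\sigma}$ because $X_{\tau^\sigma}$ is a $T$-invariant boundary stratum, not because $T$ is abelian; the conclusion you draw from it is the right one.
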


\begin{proof}
	The variety $Z$ is a complete toric variety. The action by the torus $T$ on $Z$ factors through the quotient torus $T_\tau$. The kernel of this action is exactly the stabilizer of $\{C_{\pi,\ff}\}$ which is described in Proposition \ref{prop:stabilizer-ell-ge-1}. The quotient of $T_\tau$ by this kernel is the torus whose character lattice is $N_{\pi,\ff}$; this is the lattice whose associated vector space contains the fan $\Sigma'$ for $Z$. We see that the second claim of the theorem will follow from the third, since for $\ff$ general and $\ell>1$, $N_\tau=N_{\pi,\ff}$, see Corollary \ref{cor:stab}.

	In Lemma \ref{lemma:limit} below we will show that for general $v,v'\in N_\tau$, the limits of $\{C_{\pi,\ff}\}$ under the corresponding one-parameter subgroups of $T_\tau$ coincide if and only if $v$ and $v'$ belong to the same (full-dimensional) region in $\Sigma_\pi$. 
	On the other hand, general $w,w'\in N_{\pi,\ff}$ belong to the same (full-dimensional) cone of $\Sigma'$ if and only if they give the same limit of 
	$\{C_{\pi,\ff}\}$, see e.g. \cite[Proposition 3.2.2]{cls}. Since any complete fan is determined by its full-dimensional cones, it follows that $\Sigma_\pi$ is the preimage in $V$ of $\Sigma'$. The third claim now follows. 

Recalling the discussion following Definition \ref{defn:stablattice}, the map $N_\tau\to N_{\pi,\ff}$ has finite kernel when $\ell>1$, and kernel generated by the image of $\pi^*$ when $\ell=1$. The first claim now follows as well.
\end{proof}
\begin{lemma}\label{lemma:limit}
	For general $v\in N_\tau$, the limit of $\{C_{\pi,\ff}\}$ under $v$ is the composition of $\phi(v)$ with the map sending a face $\varepsilon\prec \tau$ to the cycle $\{X_{\varepsilon}\}$. In particular, for generic $v,v'\in N_\tau$, the limits of $\{C_{\pi,\ff}\}$ coincide if and only if $v$ and $v'$ belong to the same full-dimensional region of $\Sigma_\pi$.
\end{lemma}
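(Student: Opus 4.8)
The plan is to deduce the formula directly from Theorem~\ref{thm:limit}, with essentially all of the geometric content already contained there; the work of this lemma is a translation between its combinatorial output and the definitions of $\phi$ and $\Sigma_\pi$. Throughout I take $v \in N_\tau$ general, so that $v$ lies in the relative interior of a maximal cone of the normal fan $\Sigma$; then $\tau^v$ is a vertex of $\tau$ and $\pi|_{\tau^v}$ is constant. We are thus in the second case of Theorem~\ref{thm:limit}, giving
\[
\lim_v \{C_{\pi,\ff}\} = \sum_{i,\ \sigma \in \Sigma_i^v} m_i^\sigma \cdot \{t_i \cdot C_{\pi_i^\sigma}\},
\]
the outer sum ranging over basepoints $i$ of $\pi|_{\tau^v}$. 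It remains to identify the right-hand side with $\psi(\phi(v))$, where $\psi$ denotes the map $\varepsilon \mapsto \{X_\varepsilon\}$ on faces of $\tau$.

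First I would identify the individual summands. By Remark~\ref{rem:minimal-i-v-face}, for general $v$ every $\sigma \in \Sigma_i^v$ is a codimension-one cone, so $\tau^\sigma$ is a minimal $i$-face (an edge). For such an edge both $m_i^\sigma$ and $\mult_i(\tau^\sigma)$ are, by definition, the multiplicity of $\Res(\kappa_i \circ \pi|_{\tau^\sigma})$, hence equal. Moreover $\pi_i^\sigma$ is a primitive length-one Cayley structure on the edge $\tau^\sigma$, so $C_{\pi_i^\sigma}$ is exactly the reduced toric curve $X_{\tau^\sigma}$; since $X_{\tau^\sigma}$ is $T$-invariant, $\{t_i \cdot C_{\pi_i^\sigma}\} = \{X_{\tau^\sigma}\}$. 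Thus each summand equals $\mult_i(\tau^\sigma) \cdot \{X_{\tau^\sigma}\} = \psi(\mult_i(\tau^\sigma)\, \tau^\sigma)$.

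Next I would match the index sets. For a codimension-one cone $\sigma$ and general $v$, the condition $\sigma \in \Sigma_i^v$ — that $v + \RR_{>0}\pi^*(e_i^*)$ meets the relative interior of $\sigma$ — coincides with the condition $v \in \sigma - \RR_{\geq 0}\pi^*(e_i^*)$ from the definition of $\phi$ (for general $v$ the ray avoids the relative boundary of $\sigma$, and $t=0$ is excluded since $v \notin \sigma$). The only remaining discrepancy is that $\phi$ sums over all $i$ whereas Theorem~\ref{thm:limit} sums only over basepoints of $\pi|_{\tau^v}$. This is reconciled by the key observation: if $i$ is \emph{not} a basepoint, then $\pi(\tau^v)_i = 0$, so for every $u \in \tau$ we have $\pi^*(e_i^*)(u - \tau^v) = \pi(u)_i \geq 0$; hence $\pi^*(e_i^*)$ lies in the maximal normal cone of the vertex $\tau^v$, the very cone containing $v$. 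Consequently the ray $v + \RR_{\geq 0}\pi^*(e_i^*)$ stays inside that maximal cone and meets no codimension-one cone, so $i$ contributes nothing to $\phi(v)$ either, matching the basepoint restriction. Combining these identifications yields $\lim_v \{C_{\pi,\ff}\} = \psi(\phi(v))$.

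Finally, for the ``in particular'' clause I would use that distinct faces $\varepsilon \prec \tau$ give distinct irreducible curves $X_\varepsilon$, which are linearly independent as one-cycles; hence $\psi$ is injective on the subgroup of $Z(\tau)$ generated by edges, which contains every $\phi(v)$. Therefore the generic limits coincide if and only if $\phi(v) = \phi(v')$, which by the definition of $\Sigma_\pi$ is precisely the condition that $v$ and $v'$ lie in the same full-dimensional region of $\Sigma_\pi$. I expect the main obstacle to be the careful bookkeeping in matching the two index descriptions — in particular the normal-cone computation showing that non-basepoint indices drop out — together with verifying that the constancy regions of $\phi$ are genuinely the maximal cones of $\Sigma_\pi$, so that equality of $\phi$-values corresponds to membership in a single region rather than merely to a common value taken on disconnected cells.
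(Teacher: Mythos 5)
Your proof is correct and follows essentially the same route as the paper's: apply Theorem \ref{thm:limit}, note that for general $v$ the vertex $\tau^v$ kills the first term, identify each summand $m_i^\sigma\{t_i\cdot C_{\pi_i^\sigma}\}$ with $\mult_i(\tau^\sigma)\{X_{\tau^\sigma}\}$ using that edges give torus-fixed curves, and match index sets. Your explicit check that non-basepoint indices contribute nothing to $\phi(v)$ (via $\pi^*(e_i^*)$ lying in the normal cone of $\tau^v$) is a detail the paper leaves implicit, but it is the same argument, just more carefully bookkept.
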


\begin{proof}
Fix a general $v\in N_\tau$. By Theorem \ref{thm:limit}, the limit of $\{C_{\pi,\ff}\}$ is
\begin{equation}\label{eqn:limit}
		\sum_{i,\sigma\in\Sigma_i^v} m_i^\sigma\cdot \{t_i\cdot C_{\pi_i^\sigma}\}.
	\end{equation}
	Indeed, since $v$ is general, it follows that $\tau^v$ is a vertex, so $\pi_{\tau^v}$ is constant and there is no $m^v\cdot \{C_{\pi^v,\ff}\}$ term in the limit.

	We claim that \eqref{eqn:limit} agrees with 
\[		
\sum_{i,\sigma\in\Sigma_i^v} \mult_i(\tau^\sigma)\cdot \{X_{\tau^\sigma}\}.
\]
Indeed, since $v$ is general, every $\sigma\in \Sigma_i^v$ has codimension one, so every corresponding $\tau^\sigma$ is an edge of $\tau$. It follows that $C_{\pi_i^\sigma}$ is torus fixed, so we may dispense with the action by the $t_i$. Moreover, for any $i$, the Cayley structure $\pi_i^\sigma$ results in the curve $X_{\tau^\sigma}$ .
Finally, it is clear from the definition that $\mult_i(\sigma)$ is just the multiplicity of $\Res(\kappa_i \circ \pi|_{\tau^\sigma})$. This shows the first claim. The second is immediate.
\end{proof}

\begin{ex}\label{ex:fanochow}
	We continue our analysis of the Cayley structures $\pi$ and $\pi'$ from Example \ref{ex:cayley}. For ease of referring to the edges of $\A$, we label the vertices in the left of Figure \ref{fig:label}. The normal fan of the convex hull of $\A$ is pictured in Figure \ref{fig:introex}. For the Cayley structure $\pi$, the behaviour of the map $\phi$ is described in Figure \ref{fig:phiv1} as we now explain. Depicted in the figures are the $z=1$ and $z=-1$ hyperplanes in $V\cong\RR^3$. Since $\phi$ is invariant under scaling, the values of $\phi$ on these two slices determines $\phi$ everywhere, except for on the $z=0$ hyperplane.

Since $\pi$ has length $1$, contributions to $\phi$ come from $0$-basepoints (and $\sigma\in \Sigma_0$) or $1$-basepoints (and $\sigma\in \Sigma_1$).
We have subdivided the $z=1$ and $z=-1$ slices into full-dimensional polyhedra on whose interiors the $0$-basepoint and $1$-basepoint contributions to $\phi$ are constant. In each of the depicted regions, the label $[\alpha,\beta]$ means that the contribution of the $0$-basepoint to $\phi$ is $\alpha\in Z(\A)$ and the $1$-basepoint contribution to $\phi$ is $\beta\in Z(\A)$. In particular, the value of $\phi$ is $\alpha+\beta$. 
For example, on the interior of the cone spanned by $(0,1,1)$, $(0,-1,1)$, $(1,-1,1)$, $(1,0,1)$, the value of $\phi$ is $\overline{5B}+\overline{3B}\in Z(\A)$.

As is predicted by Theorem \ref{thm:fan}, the lineality space of $\Sigma_\pi$ (that is, the direction in which $\phi$ is constant) is exactly the image of $\pi^*$:  the span of $(0,1,0)$. Projecting the regions of constancy of $\phi$ onto the first and third coordinates, we obtain the fan on the left of Figure \ref{fig:fans}.
The values of $\phi$ on the interior of these regions are exactly the generic limits of $C_{\pi,\ff}$. Each of these limits is a pair of torus-invariant lines meeting in a torus fixed point. The corresponding edges of $\A$ are colour-coded on the right of Figure \ref{fig:label}. The six uncoloured edges ($\overline{23}$, $\overline{56}$, $\overline{1A}$, $\overline{1B}$, $\overline{4A}$, $\overline{4B}$) are not $0$- or $1$-faces of $\A$ (cf.~Definition \ref{defn:iface}) and so do not arise in any generic limit. 
	We note that since all these limits are reduced conics, in this case the Hilbert-Chow morphism induces an isomorphism of normalizations of the orbit closures in the Hilbert scheme and Chow scheme. The orbit has the same dimension as the component $Z_\pi$ of the Hilbert scheme, so the normalization of $Z_\pi$ is the toric variety corresponding to the left fan of Figure \ref{fig:fans}. 

	We now consider the Cayley structure $\pi'$ instead. By Theorem \ref{thm:fan}, we know that the quasifan $\Sigma_{\pi'}$ will have lineality space spanned by $(0,1,1)$. A slice of this quasifan is depicted on the left of Figure \ref{fig:fanochow}. The maximal cones are labeled with the corresponding values of $\phi$; we notice that in this case, some of the generic limits are non-reduced. After an appropriate choice of coordinates, the projection of $\Sigma_{\pi'}$ to the quotient space is exactly the fan on the left of Figure \ref{fig:fans}.

	One may conduct a similar analysis for the other length $1$ Cayley structures of Example \ref{ex:fano}. The resulting fans are, up to change of coordinates, exactly those pictured in Figure \ref{fig:fans}.
\end{ex}
\begin{figure}
\begin{tikzpicture}[scale=1.5]
\draw[fill] (-1,0) circle [radius=0.04] node[left] {$4$};
\draw[fill] (0,-1) circle [radius=0.04] node[below right] {$6$};
\draw[fill] (-1,-1) circle [radius=0.04] node[below left] {$5$};
\draw[fill] (1,0) circle [radius=0.04] node[right] {$1$};
\draw[fill] (0,1) circle [radius=0.04] node[above left] {$3$};
\draw[fill] (1,1) circle [radius=0.04] node[above right] {$2$};
\draw[fill] (.5,1.3) circle [radius=0.04] node[above] {$A$};
\draw[fill] (-.5,-1.3) circle [radius=0.04] node[below] {$B$};
\draw (0,1) -- (-1,0) -- (-1,-1) -- (0,-1) -- (1,0) -- (1,1) -- (.5,1.3) -- (1,0) -- (.5,1.3) -- (0,-1) -- (.5,1.3) -- (-1,-1) -- (.5,1.3) -- (-1,0) -- (.5,1.3) -- (0,1);
\draw (-1,-1) -- (-.5,-1.3) -- (0,-1);
\draw[dashed] (0,1) -- (1,1) -- (-.5,-1.3);
\draw[dashed] (1,0) -- (-.5,-1.3);
\draw[dashed] (-1,0) -- (-.5,-1.3);
\draw[dashed] (0,1) -- (-.5,-1.3);
\end{tikzpicture}
\tiny{
\begin{tikzpicture}[scale=1.5]
\draw (0,1) -- (-1,0) -- (-1,-1) -- (0,-1) -- (1,0) -- (1,1) -- (.5,1.3) -- (1,0) -- (.5,1.3) -- (0,-1) -- (.5,1.3) -- (-1,-1) -- (.5,1.3) -- (-1,0) -- (.5,1.3) -- (0,1);
\draw (-1,-1) -- (-.5,-1.3) -- (0,-1);
\draw[dashed] (0,1) -- (1,1) -- (-.5,-1.3);
\draw[dashed] (1,0) -- (-.5,-1.3);
\draw[dashed] (-1,0) -- (-.5,-1.3);
\draw[dashed] (0,1) -- (-.5,-1.3);
\draw[thick,orange] (-1,-1) -- (.5,1.3) -- (0,1);
\draw[thick,yellow] (-1,-1) -- (-.5,-1.3) -- (0,1);
\draw[thick,green] (1,1) -- (.5,1.3) -- (0,-1);
\draw[thick,cyan] (1,1) -- (-.5,-1.3) -- (0,-1);
\draw[thick,violet] (1,1) -- (1,0) -- (0,-1);
\draw[thick,red] (-1,-1) -- (-1,0) -- (0,1);
\draw[draw=none] (-1.5,-1.6) -- (0,0) ;
\end{tikzpicture}
}\caption{Vertex labeling and generic limiting cycles (Example \ref{ex:fanochow})}\label{fig:label}
\end{figure}
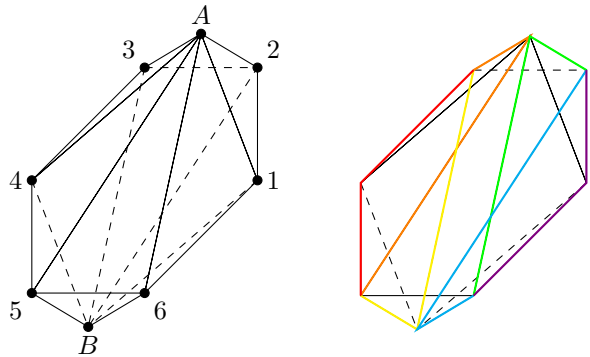
\begin{figure}
{\tiny{
\begin{tikzpicture}[scale=1.45]
\draw (0,2) -- (0,-2);
\draw (1,2) -- (1,-2);
\draw (-1,2) -- (-1,-2);
\draw (-2,0) -- (-1,0) -- (0,-1) -- (1,-1) -- (2,-2);
\draw (-2,2) -- (-1,1) -- (0,1) -- (1,0) -- (2,0);
\draw[fill] (1,0) circle [radius=0.04] node[above right] {$(1,0,1)$};
\draw[fill] (-1,0) circle [radius=0.04] node[above left] {$(-1,0,1)$};
\draw[fill] (0,1) circle [radius=0.04] node[right] {$(0,1,1)$};
\draw[fill] (-1,1) circle [radius=0.04] node[left] {$(-1,1,1)$};
\draw[fill] (0,-1) circle [radius=0.04] node[below right] {$(0,-1,1)$};
\draw[fill] (1,-1) circle [radius=0.04] node[right] {$(1,-1,1)$};
\draw[fill,blue] (-.5,1.5)  node {$[0,\overline{2B}+\overline{6B}]$};
\draw[fill,blue] (-.5,.5)  node {$[\overline{6B},\overline{2B}]$};
\draw[fill,blue] (-.5,-1.5)  node {$[\overline{2B}+\overline{6B},0]$};
\draw[fill,blue] (.5,1.5)  node {$[0,\overline{3B}+\overline{5B}]$};
\draw[fill,blue] (.5,-.5)  node {$[\overline{5B},\overline{3B}]$};
\draw[fill,blue] (.5,-1.5)  node {$[\overline{3B}+\overline{5B},0]$};
\draw[fill,blue] (-1.5,2.1)  node {$[0,\overline{12}+\overline{16}]$};
\draw[fill,blue] (-1.5,.5)  node {$[\overline{16},\overline{12}]$};
\draw[fill,blue] (-1.5,-1.5)  node {$[\overline{12}+\overline{16},0]$};
\draw[fill,blue] (1.5,.5)  node {$[0,\overline{34}+\overline{45}]$};
\draw[fill,blue] (1.5,-.5)  node {$[\overline{45},\overline{34}]$};
\draw[fill,blue] (1.5,-2.1)  node {$[\overline{34}+\overline{45},0]$};
\draw (0,-2.3) node {$z=1$ slice};
\end{tikzpicture} 
\begin{tikzpicture}[scale=1.45]
	\draw[draw=none](-.15,0)--(.15,0);
	\draw[dashed,thick,gray](0,2.1) -- (0,-2.4);
\end{tikzpicture} 
\begin{tikzpicture}[scale=1.45]
\draw (0,2) -- (0,-2);
\draw (1,2) -- (1,-2);
\draw (-1,2) -- (-1,-2);
\draw (-2,0) -- (-1,0) -- (0,-1) -- (1,-1) -- (2,-2);
\draw (-2,2) -- (-1,1) -- (0,1) -- (1,0) -- (2,0);
\draw[fill] (1,0) circle [radius=0.04] node[above right] {$(1,0,-1)$};
\draw[fill] (-1,0) circle [radius=0.04] node[above left] {$(-1,0,-1)$};
\draw[fill] (0,1) circle [radius=0.04] node[right] {$(0,1,-1)$};
\draw[fill] (-1,1) circle [radius=0.04] node[left] {$(-1,1,-1)$};
\draw[fill] (0,-1) circle [radius=0.04] node[below right] {$(0,-1,-1)$};
\draw[fill] (1,-1) circle [radius=0.04] node[right] {$(1,-1,-1)$};
\draw[fill,blue] (-.5,1.5)  node {$[0,\overline{2A}+\overline{6A}]$};
\draw[fill,blue] (-.5,.5)  node {$[\overline{6A},\overline{2A}]$};
\draw[fill,blue] (-.5,-1.5)  node {$[\overline{2A}+\overline{6A},0]$};
\draw[fill,blue] (.5,1.5)  node {$[0,\overline{3A}+\overline{5A}]$};
\draw[fill,blue] (.5,-.5)  node {$[\overline{5A},\overline{3A}]$};
\draw[fill,blue] (.5,-1.5)  node {$[\overline{3A}+\overline{5A},0]$};
\draw[fill,blue] (-1.5,2.1)  node {$[0,\overline{12}+\overline{16}]$};
\draw[fill,blue] (-1.5,.5)  node {$[\overline{16},\overline{12}]$};
\draw[fill,blue] (-1.5,-1.5)  node {$[\overline{12}+\overline{16},0]$};
\draw[fill,blue] (1.5,.5)  node {$[0,\overline{34}+\overline{45}]$};
\draw[fill,blue] (1.5,-.5)  node {$[\overline{45},\overline{34}]$};
\draw[fill,blue] (1.5,-2.1)  node {$[\overline{34}+\overline{45},0]$};
\draw (0,-2.3) node {$z=-1$ slice};
\end{tikzpicture}
}
}
\caption{Contributions to $\phi(v)$ for $\pi$ (Example \ref{ex:fanochow})}\label{fig:phiv1}
\end{figure}
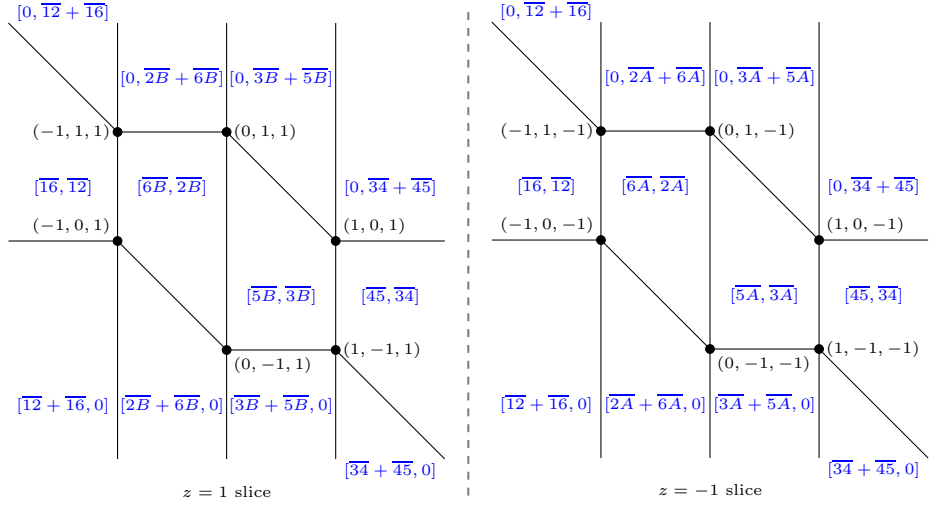
\begin{figure}
	\tiny\begin{tikzpicture}[scale=1.6]
		\draw[->] (0,0) -- (-1.2,-2.4);
		\draw[->] (0,0) -- (1.2,2.4);
		\draw[->] (0,0) -- (-1.2,0);
		\draw[->] (0,0) -- (1.2,0);
		\draw[->] (0,0) -- (-1.2,1.2);
\draw[->] (0,0) -- (0,1.2);
\draw[->] (0,0) -- (1.2,1.2);
\draw[->] (0,0) -- (-1.2,-1.2);
\draw[->] (0,0) -- (0,-1.2);
\draw[->] (0,0) -- (1.2,-1.2);
\draw[fill] (1,0) circle [radius=0.04] node[above] {$(1,0,0)$};
\draw[fill] (-1,0) circle [radius=0.04] node[above] {$(-1,0,0)$};
\draw[fill] (0,1) circle [radius=0.04] node[left] {$(0,0,1)$};
\draw[fill] (-1,1) circle [radius=0.04] node[left] {$(-1,0,1)$};
\draw[fill] (1,1) circle [radius=0.04] node[right] {$(1,0,1)$};
\draw[fill] (0,-1) circle [radius=0.04] node[right] {$(0,0,-1)$};
\draw[fill] (-1,-1) circle [radius=0.04] node[left] {$(-1,0,-1)$};
\draw[fill] (1,-1) circle [radius=0.04] node[right] {$(1,0,-1)$};
\draw[fill] (1,2) circle [radius=0.04] node[right] {$(1,0,2)$};
\draw[fill] (-1,-2) circle [radius=0.04] node[right] {$(-1,0,-2)$};
\draw[blue] (-.3,.7)  node {$2\cdot \overline{2B}$};
\draw[blue] (.3,1.3)  node {$2\cdot \overline{3B}$};
\draw[blue] (1.2,1.5)  node {$\overline{4B}+\overline{34}$};
\draw[blue] (1,.5)  node {$\overline{34}+\overline{45}$};
\draw[blue] (1,-.5)  node {$\overline{4A}+\overline{45}$};
\draw[blue] (.25,-.7)  node {$2\cdot\overline{5A}$};
\draw[blue] (-.4,-1.3)  node {$2\cdot\overline{6A}$};
\draw[blue] (-1.2,-1.5)  node {$\overline{1A}+\overline{16}$};
\draw[blue] (-1,-.5)  node {$\overline{12}+\overline{16}$};
\draw[blue] (-1,.5)  node {$\overline{12}+\overline{1B}$};
	\end{tikzpicture}
	\begin{tikzpicture}
		\draw[draw=none] (-3.5,-3.9) -- (0,0);
\draw[fill,lightgray] (2,-2)--(3,-1)--(3,0)--(2,1)--(0,2)--(-2,2)--(-3,1)--(-3,0)--(-2,-1)--(0,-2)--(2,-2);
\draw (2,-2)--(3,-1)--(3,0)--(2,1)--(0,2)--(-2,2)--(-3,1)--(-3,0)--(-2,-1)--(0,-2)--(2,-2);
\draw[fill] (2,-2) circle [radius=0.04] node[below] {$(2,2,-2)$};
\draw[fill] (0,-2) circle [radius=0.04] node[below] {$(0,2,-2)$};
\draw[fill] (3,-1) circle [radius=0.04] node[left] {$(3,1,-1)$};
\draw[fill] (3,0) circle [radius=0.04] node[left] {$(3,0,0)$};
\draw[fill] (2,1) circle [radius=0.04] node[right] {$(2,-1,1)$};
\draw[fill] (0,2) circle [radius=0.04] node[above] {$(0,-2,2)$};
\draw[fill] (-2,2) circle [radius=0.04] node[above] {$(-2,-2,2)$};
\draw[fill] (-3,1) circle [radius=0.04] node[right] {$(-3,-1,1)$};
\draw[fill] (-3,0) circle [radius=0.04] node[right] {$(-3,0,0)$};
\draw[fill] (-2,-1) circle [radius=0.04] node[above right] {$(-2,1,-1)$};
	\end{tikzpicture}

	\caption{$\Sigma_{\pi'}$ and $\Ch_T(C_{\pi',\ff})$ (Examples \ref{ex:fanochow} and \ref{ex:fanochow2})}\label{fig:fanochow}
	\end{figure}

	\subsection{Chow Polytopes}\label{sec:chow}
Let $Z$ be any be any cycle in $\PP^\ell$. Recall that its \emph{Chow polytope} $\Ch(Z)$ is the convex hull of the weights in $\ZZ^{\ell+1}$ in the corresponding Chow form \cite[\S6 Definition 3.1]{GKZ}. This is the moment polytope of the orbit closure of $\{Z\}$ in the Chow variety under the action by $(\KK^*)^{\ell+1}$; its normal fan is the fan corresponding to this toric variety. More generally, consider any subtorus $T'$ of $(\KK^*)^{\ell+1}$ with character lattice $M'$. The Chow polytope of $Z$ with respect to $T'$ is the linear projection $\Ch_{T'}(Z)$ of $\Ch(Z)$ induced by the map $\ZZ^{\ell+1}\to M'$, see e.g.~\cite[Case 4.8]{fink}. This is the moment polytope of the $T'$-orbit closure of $\{Z\}$.

Returning to our previous setup with a Cayley structure $\pi:\tau\to \Delta_{\ell}(d)$, we see from Theorem \ref{thm:fan} that the image of the fan $\Sigma_\pi$ is the normal fan to $\Ch_T(C_{\pi,\ff})$. We may explicitly describe this polytope using the combinatorics of $\pi$.
Let $\mu:Z(\tau)\to M$ be the linear map sending an edge   $\varepsilon \preceq\tau$ to $2\cdot L(\varepsilon)$ times the midpoint of $\varepsilon$.

\begin{cor}\label{cor:chow}
Assume that $\ff$ has distinct roots.	The Chow polytope $\Ch_T(C_{\pi,\ff})$ is the convex hull of the points
\[
\mu(\phi(v))\in M
\]
as $v$ ranges over general elements of each cone in $\Sigma_\pi$.
	\end{cor}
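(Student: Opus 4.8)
\emph{Plan.} The strategy is to combine Theorem \ref{thm:fan} with Lemma \ref{lemma:limit}, reducing the statement to a computation of the Chow weight of a single torus-invariant toric curve. By Theorem \ref{thm:fan} (and the remarks preceding this corollary), the image of $\Sigma_\pi$ is the normal fan of the moment polytope $\Ch_T(\{C_{\pi,\ff}\})$. Since a lattice polytope is the convex hull of its vertices, and the vertices of a moment polytope are indexed by the maximal cones of its normal fan, it suffices to show that the vertex attached to a full-dimensional cone $\sigma\in\Sigma_\pi$ is exactly $\mu(\phi(v))$ for general $v$ in the relative interior of $\sigma$. That vertex is the moment-map image of the corresponding $T$-fixed point of $\overline{T\cdot\{C_{\pi,\ff}\}}$, namely the limit cycle $\lim_{t\to 0} v(t)\cdot\{C_{\pi,\ff}\}$; as this cycle is $T$-invariant, its moment image is just its own Chow weight $\Ch_T$, a single lattice point. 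So the task is to compute this Chow weight and match it with $\mu(\phi(v))$.

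First I would identify the limit using Lemma \ref{lemma:limit}: for general $v\in\sigma$ it is the image of $\phi(v)$ under $\epsilon\mapsto\{X_\epsilon\}$, so writing $\phi(v)=\sum_\epsilon c_\epsilon\cdot\epsilon$ as a formal sum of edges of $\tau$, the limit cycle is $\sum_\epsilon c_\epsilon\{X_\epsilon\}$. Next I would invoke additivity of Chow polytopes: since the Chow form of a sum of cycles is the product of the Chow forms \cite{GKZ}, $\Ch_T$ carries sums of cycles to Minkowski sums, so $\Ch_T(\sum_\epsilon c_\epsilon\{X_\epsilon\})=\sum_\epsilon c_\epsilon\,\Ch_T(\{X_\epsilon\})$. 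Each $X_\epsilon$ is $T$-invariant, so each $\Ch_T(\{X_\epsilon\})$ is a single point, and by linearity of $\mu$ everything reduces to the single-curve identity $\Ch_T(\{X_\epsilon\})=\mu(\epsilon)$ for each edge $\epsilon\preceq\tau$.

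The crux, which I expect to be the main obstacle, is this last identity. I would prove it by describing the Chow polytope of the projective toric curve $X_\epsilon$ (with respect to the full ambient torus $(\KK^*)^{\#\epsilon}$) as a secondary polytope of the configuration $\epsilon$ \cite{GKZ}, and then projecting to $M$ via $e_u\mapsto u$. A vertex of this secondary polytope corresponds to a subdivision of $\conv\epsilon$ into subintervals $[a_{i-1},a_i]$, and its image in $M$ is $\sum_i \vol[a_{i-1},a_i]\,(a_{i-1}+a_i)$. Writing $a_i=u_0+\ell_i w$ with $w$ the primitive direction of $\epsilon$ and $0=\ell_0<\cdots<\ell_k=L(\epsilon)$, the volume factors telescope:
\[
\sum_i (\ell_i-\ell_{i-1})\bigl(2u_0+(\ell_{i-1}+\ell_i)w\bigr)=2u_0\,L(\epsilon)+w\sum_i(\ell_i^2-\ell_{i-1}^2)=L(\epsilon)(u_0+u_1),
\]
where $u_0,u_1$ are the endpoints of $\epsilon$. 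This value is independent of the subdivision (consistent with $X_\epsilon$ being $T$-invariant, so that $\Ch_T(\{X_\epsilon\})$ is a single point), and since $L(\epsilon)(u_0+u_1)=2L(\epsilon)\cdot\tfrac{1}{2}(u_0+u_1)=\mu(\epsilon)$, the single-curve identity follows. Assembling the steps gives $\Ch_T(\lim_v\{C_{\pi,\ff}\})=\sum_\epsilon c_\epsilon\mu(\epsilon)=\mu(\phi(v))$, so each vertex of $\Ch_T(\{C_{\pi,\ff}\})$ has the claimed form; the full-dimensional cones already supply all vertices, while general points of lower-dimensional cones yield $\mu(\phi(v))$ lying in this polytope (being moment images of non-generic fixed points), so taking the convex hull over all cones of $\Sigma_\pi$ gives exactly the polytope. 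The delicate point to get right is the normalization in the definition of the Chow polytope versus the secondary polytope: these differ by a translation, and I must track it so that the projection to $M$ lands on $\mu(\epsilon)$ rather than on the Newton polytope of a defining equation.
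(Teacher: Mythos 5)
Your proposal is correct, and its overall skeleton coincides with the paper's: express $\Ch_T(C_{\pi,\ff})$ as the convex hull of the Chow weights of the limit cycles under general one-parameter subgroups, identify those limits via Lemma \ref{lemma:limit}, use Minkowski additivity of $\Ch_T$ on sums of cycles, and reduce everything to the single-edge identity $\Ch_T(X_\epsilon)=\mu(\epsilon)$. Where you genuinely diverge is in the proof of that last identity, which the paper isolates as Lemma \ref{lemma:midpoint}. The paper proves it by a further degeneration: it realizes $X_\epsilon$ as $C_{\pi'}$ for a Cayley structure on $\Delta_n(1)$, applies Theorem \ref{thm:limit} to degenerate it to $L(\epsilon)$ times a coordinate line in $\PP^n$ (whose Chow weight is elementary), and then projects; this reuses machinery already built in the paper and needs only the most basic input from \cite{GKZ}. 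You instead invoke the theorem that the Chow polytope of a projective toric variety is the secondary polytope of its point configuration, take an arbitrary vertex corresponding to a subdivision of the interval, and verify by the telescoping computation that its projection to $M$ is $L(\epsilon)(u_0+u_1)=\mu(\epsilon)$ independently of the subdivision. Your computation is correct (the normalization you flag does work out: the coordinate-line case $L(\epsilon)=1$ recovers the weight $e_i+e_j$, confirming no translation discrepancy), and it is arguably more direct, at the cost of citing a heavier external theorem. One small point you share with the paper's own terse proof: for $v$ general in a \emph{lower-dimensional} cone, Lemma \ref{lemma:limit} is not literally available and the limit cycle need not be torus-fixed, so the assertion that $\mu(\phi(v))$ lies in the polytope for such $v$ is asserted rather than argued; since the maximal cones already supply all vertices, this does not affect correctness of the convex-hull statement, but it is the one step in your write-up that is hand-waved.
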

	
	Before proving the corollary, we recall several facts about Chow polytopes we will use;  these are stated in \cite[\S6.3]{GKZ} for the ``classical'' case $\Ch(Z)$ but are straightforward to generalize to the case of $\Ch_{T'}(Z)$. Firstly, $\Ch_{T'}(Z)$ is the convex hull of the polytopes $\Ch_{T'}(Z_v)$ where $v$ ranges over general one-parameter subgroups of $T'$ and $Z_v$ is the limit cycle of $Z$ under $v$. Secondly, for a cycle $Z=\sum c_iZ_i$, $\Ch_{T'}(Z)$ is the Minkowski sum
	\[
		\Ch_{T'}(Z)=\sum c_i\Ch_{T'}(Z_i).
	\]
Finally, we need the following:

\begin{lemma}\label{lemma:midpoint}
Let $\varepsilon$ be an edge of $\A$. Then $\Chow_T(X_\varepsilon)=\mu(\varepsilon)$.
\end{lemma}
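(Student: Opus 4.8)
The key point is that $X_\varepsilon$ is invariant under the whole torus $T$ (it is a toric subvariety of $X_\A$), so the one-cycle $\{X_\varepsilon\}$ is a $T$-fixed point of $\Chow_d(X_\A)$ and $\Ch_T(X_\varepsilon)$ is automatically a single point of $M_\RR$. The plan is to identify this point by computing the $T$-weight of the Chow form of $X_\varepsilon$ directly, after realizing the latter as a resultant. Throughout, let $v_-,v_+$ be the endpoints of the edge $\varepsilon$, let $w$ be the primitive generator of $\langle\varepsilon\rangle$, set $L=L(\varepsilon)$ so that $v_+=v_-+Lw$, and write each $v\in\varepsilon$ as $v=v_-+a_v w$ with $a_v\in\{0,\ldots,L\}$, $a_{v_-}=0$, $a_{v_+}=L$, and $\gcd_v a_v=1$.

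First I would describe $X_\varepsilon$ via its normalization. The map $\PP^1\to X_\varepsilon$, $[y_0:y_1]\mapsto (y_0^{L-a_v}y_1^{a_v})_{v\in\varepsilon}$, is birational onto its image (this uses $\gcd_v a_v=1$), so $X_\varepsilon$ is a degree-$L$ rational curve. A codimension-two linear subspace of $\PP^{\#\A-1}$ is cut out by two covectors $u^{(1)},u^{(2)}$, and pulling these back along the normalization yields two binary forms $g_j=\sum_{v\in\varepsilon}u^{(j)}_v\,y_0^{L-a_v}y_1^{a_v}$ of degree $L$. The subspace meets $X_\varepsilon$ precisely when $g_1,g_2$ share a root, so up to a nonzero scalar the Chow form of $X_\varepsilon$ is the resultant $R:=\Res_y(g_1,g_2)$; indeed both are forms of bidegree $(L,L)$ in the $u^{(j)}_v$ vanishing exactly on the Chow hypersurface.

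It then remains to read off the $T$-weight of $R$. Since $x_v$ has weight $v=v_-+a_vw$, substituting $u^{(j)}_v\mapsto \chi^{v}(t)\,u^{(j)}_v$ amounts to $g_j\mapsto c\,g_j(y_0,s\,y_1)$, where $c=\chi^{v_-}(t)$ and $s=\chi^{w}(t)$. I would then invoke the two standard transformation laws for the resultant of two binary forms of degree $L$: scaling both forms by $c$ multiplies $R$ by $c^{2L}$, while the substitution $(y_0,y_1)\mapsto(y_0,s\,y_1)$, having determinant $s$, multiplies $R$ by $s^{L^2}$. Hence $R(t\cdot u)=c^{2L}s^{L^2}R(u)=\chi^{\,2Lv_-+L^2w}(t)\,R(u)$, so $R$ is a $T$-eigenvector of weight $2Lv_-+L^2w$. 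Because $v_+=v_-+Lw$, this weight equals $L(v_-+v_+)=2L(\varepsilon)\cdot\tfrac{v_-+v_+}{2}=\mu(\varepsilon)$, which is exactly the claim.

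I expect the only real subtlety to be the bookkeeping in the weight computation: correctly decomposing the $T$-action into the overall scaling factor $c$ and the reparametrization factor $s$, and checking that the resulting eigenweight of the Chow form carries the sign matching the convention that $x_v$ has weight $v$ (so that it is $+\mu(\varepsilon)$ rather than its negative). The identification of the Chow form with the resultant is standard, as is the birationality of the normalization. As an alternative, one could instead cite that the classical Chow polytope of the toric curve $X_\varepsilon$ is the secondary polytope of the one-dimensional configuration $\varepsilon$ and verify by a short telescoping computation that the projection $e_v\mapsto v$ to $M_\RR$ is constant on its vertices with value $\mu(\varepsilon)$; the resultant argument above is more direct.
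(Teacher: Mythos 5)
Your argument is correct, but it proceeds along a genuinely different route from the paper's. You compute the Chow form of $X_\varepsilon$ head-on: identify it (up to scalar) with the resultant $\Res_y(g_1,g_2)$ of the two binary forms obtained by pulling back a pair of covectors along the birational monomial parametrization $\PP^1\to X_\varepsilon$, and then extract the $T$-eigenweight $2Lv_-+L^2w=L(v_-+v_+)=\mu(\varepsilon)$ from the standard transformation laws of the resultant under scaling and under $(y_0,y_1)\mapsto(y_0,sy_1)$. The paper avoids any explicit Chow-form computation: it realizes $X_\varepsilon$ as the curve $C_{\pi'}$ of a length-one Cayley structure $\pi':\Delta_n(1)\to\Delta_1(L(\varepsilon))$ on a unimodular simplex, applies Theorem \ref{thm:limit} to degenerate this curve to $L(\varepsilon)\cdot\{X_{\varepsilon'}\}$ for a coordinate line $\varepsilon'$ of $\PP^n$, quotes \cite[\S 6, Example 3.4]{GKZ} for the Chow polytope of a coordinate line, and projects along $\Delta_n(1)\to\varepsilon$. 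Your version is more self-contained and makes the weight completely explicit, at the price of invoking the (standard, but not established in the paper) identification of the Chow form of a parametrized rational curve with a resultant; the paper's version reuses machinery it has already built plus a citation. The two points you flag do work out: $\gcd_v a_v=1$ because $w$ generates $\langle\varepsilon\rangle$ and $a_{v_-}=0$, and the sign convention is the right one --- for the coordinate line $\{x_2=0\}$ in $\PP^2$ your recipe gives $R=u^{(1)}_0u^{(2)}_1-u^{(1)}_1u^{(2)}_0$ of weight $e_0+e_1=\mu(\varepsilon')$, which is the value forced by consistency with Corollary \ref{cor:chow} and the examples in \S\ref{sec:chow}.
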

\begin{proof}
	First consider the special case that $\A=\Delta_n(1)$. Then indeed $\Chow(X_\varepsilon)=\mu(\varepsilon)$, see \cite[\S 6 Example 3.4]{GKZ}. Returning to the situation of general $\A$, by taking $n=\#\varepsilon-1$, we may obtain an affine-linear bijection $\Delta_n(1)\to \varepsilon$ with some edge $\varepsilon'$ of $\Delta_n(1)$ mapping to the endpoints of $\varepsilon$. 

	There is an affine-linear Cayley structure $\varepsilon \to \Delta_{1}(L(\varepsilon))$; composition gives a Cayley structure $\pi':\Delta_n(1)\to \Delta_{1}(L(\varepsilon))$. The toric stratum $X_\varepsilon$ is just $C_{\pi'}$. We may apply Theorem \ref{thm:limit} to any $v$ minimized on $\varepsilon'$ to obtain $L(\varepsilon)\cdot \{X_{\varepsilon'}\}$ as a limit of $C_{\pi'}$. Since this is torus-fixed with respect to $(\KK^*)^{n+1}$, $L(\varepsilon)\cdot \mu(\varepsilon')$ is a vertex of $\Ch(X_\varepsilon)$. 

Since $X_\varepsilon$ is torus fixed with respect to $T$, $\Ch_T(X_\varepsilon)$ is a single vertex. It follows that this is simply the projection of $L(\varepsilon)\cdot \mu(\varepsilon')$ under the map induced by $\Delta_n(1)\to \varepsilon$. This map takes $\mu(\varepsilon')$ to the sum of the endpoints of $\varepsilon$, so $L(\varepsilon)\cdot \mu(\varepsilon')$ maps to $\mu(\varepsilon)$ as desired.
\end{proof}

The corollary is now immediate:
\begin{proof}[Proof of Corollary \ref{cor:chow}]
	This follows from the above discussion, Lemma \ref{lemma:limit}, and Lemma \ref{lemma:midpoint}.
\end{proof}

	\begin{rem}
		One could also prove Corollary \ref{cor:chow} and Theorem \ref{thm:fan} using tropical geometry. Indeed, since $\rho_{\pi,\ff}$ is the composition of a linear map with a monomial map, the tropicalization of $C_{\pi,\ff}$ (as a subvariety of the toric variety $X_\tau$) is given by the image of the standard tropical line in $\ell$-dimensional tropical projective space under the linear map $\pi^*\otimes \RR:\RR^{\ell+1}\to V$. One may compute the multiplicities of its cones using \cite[Theorem C.1]{OP}. An application of e.g.~\cite[Corollary 4.11]{fink} in this situation yields a description of the vertices of the Chow polytope. It is straightforward to see that this coincides with the result of Corollary \ref{cor:chow}; we leave this as an exercise to the tropically-minded reader.

		Although this approach to Corollary \ref{cor:chow} and Theorem \ref{thm:fan} is arguably simpler than the one we have taken here, we do not see how to obtain the full strength of Theorem \ref{thm:limit} (\emph{for non-general $v$}) using tropical methods.
	\end{rem}

	\begin{ex}\label{ex:fanochow2}
		Continuing Example \ref{ex:fanochow}, we use Corollary \ref{cor:chow} to compute $\Ch_T(C_{\pi',\ff})$, the Chow polytope for the Cayley structure $\pi'$. The result is pictured on the right of Figure \ref{fig:fanochow}. Note that the normal fan of this polytope is exactly the quasifan $\Sigma_{\pi'}$. 
	\end{ex}

	\subsection{Blowing Up}\label{sec:blowup}
In this subsection we will prove Theorem \ref{thm:limit}. Before doing so, we discuss the behaviour of families induced from Cayley structures under blowup.
Fix a Cayley structure $\pi:\tau\to\Delta_\ell(d)$.

Suppose that for some natural number $j$, we have the following data:
\begin{align*}
	a_i^{(j)},b_i^{(j)}\in\KK[z] \qquad i=0,\ldots,\ell
\end{align*}
and an affine linear map $\lambda^{(j)} : \tau \to \ZZ$.
Assume that for each $i$, $a_i^{(j)}$ and $b_i^{(j)}$ are not both divisible by $z$.
We then set
\[
	f_i^{(j)}=a_i^{(j)}y_0^{(j)}+b_i^{(j)}y_1^{(j)}\qquad \ff[j]=(f_0^{(j)},\ldots,f_\ell^{(j)}).
\]
We consider the rational map
\[
	\phi:\PP^1\times \Aff^1\dashrightarrow X_\tau
\]
where for $z\in \Aff^1$ and $y_0^{(j)},y_1^{(j)}$ homogeneous coordinates on $\PP^1$, we have 
\begin{align*}
	x_u=\begin{cases}
		0& u\notin \tau\\
		z^{\lambda^{(j)}(u)}\cdot {\ff[j]}^{\pi(u)} & u\in\tau.
	\end{cases}
\end{align*}

Fix $0 \leq k \leq \ell$ and assume that $a_k^{(j)},b_k^{(j)}$ are constants, $b_k^{(j)}\neq 0$, and $V(f_k^{(j)},z)$ is distinct from $V(f_i^{(j)},z)$ for $i\neq k$. Set $\gamma_j=a_k^{(j)}/b_k^{(j)}$.

We blow up $\PP^1\times\Aff^1$ at the point $V(\gamma_j y_0^{(j)}+ y_1^{(j)},z)=V(f_k^{(j)},z)$ and consider the induced map, which we again call $\phi$,
\[
\phi : \mathrm{Bl}_{V(f_k^{(j)},z)}(\PP^1 \times \Aff^1) \dashrightarrow X_\tau.
\]
We first consider the chart of the blowup with local coordinates
$z',y_0^{(j)},y_1^{(j)}$ where
\[z=z'(y_1^{(j)}/y_0^{(j)}+\gamma_{j}) = z' f_k^{(j)} / (b_k^{(j)} y_0^{(j)}).\]
In these coordinates, we have
\begin{align*}
	x_u=\begin{cases}
		0& u\notin \tau\\
		(z')^{\lambda^{(j)}(u)}(b_k^{(j)}y_0^{(j)})^{-\lambda^{(j)}(u)}{\ff[j]}^{\pi(u)+\lambda^{(j)}(u)\cdot e_k} & u\in\tau.
	\end{cases}
\end{align*}
\begin{lemma}\label{lemma:basepoint}
Suppose that there exists $w\in \tau$ that minimizes both $\lambda$ and $\lambda + \pi^*(e_k^*)$.
Then in the above coordinates, $\phi$ does not have a basepoint at $f_k^{(j)}=z'=0$.
\end{lemma}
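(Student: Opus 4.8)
The plan is to reduce the statement to a vanishing-order computation at a single point and then read off the conclusion from the two minimality conditions in the hypothesis. First I would observe that the locus $\{f_k^{(j)} = z' = 0\}$ is the single point $p$ with $[y_0^{(j)} : y_1^{(j)}] = [1 : -\gamma_j]$ on the exceptional divisor: since $a_k^{(j)}, b_k^{(j)}$ are constants with $b_k^{(j)} \neq 0$, the form $f_k^{(j)} = b_k^{(j)}(\gamma_j y_0^{(j)} + y_1^{(j)})$ vanishes only at this point of $\PP^1$, where in particular $y_0^{(j)} \neq 0$. Thus the whole analysis stays in the given chart. Writing $s = f_k^{(j)}/(b_k^{(j)} y_0^{(j)}) = y_1^{(j)}/y_0^{(j)} + \gamma_j$ for the local coordinate on $\PP^1$ vanishing at $p$, we have $z = z' s$, so $p = \{z' = s = 0\}$, and it suffices to check that $\phi$ is defined there.

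Next I would put each coordinate into the normal form ``monomial in $z', s$ times a unit.'' Starting from the displayed expression and expanding ${\ff[j]}^{\pi(u) + \lambda(u) e_k} = (f_k^{(j)})^{\pi(u)_k + \lambda(u)} \prod_{i \neq k} (f_i^{(j)})^{\pi(u)_i}$ (here $\lambda = \lambda^{(j)}$), I would substitute $f_k^{(j)} = b_k^{(j)} y_0^{(j)} s$ and $f_i^{(j)} = y_0^{(j)} g_i$ with $g_i = a_i^{(j)} + b_i^{(j)}(s - \gamma_j)$. Using $\sum_i \pi(u)_i = d$, the factor $y_0^{(j)}$ appears to the common power $d$ for every $u \in \tau$, the negative powers of $b_k^{(j)} y_0^{(j)}$ cancel, and one is left with
\[
x_u = (z')^{\lambda(u)}\, s^{\pi(u)_k + \lambda(u)} \cdot (y_0^{(j)})^d \cdot U_u, \qquad U_u = (b_k^{(j)})^{\pi(u)_k} \prod_{i \neq k} g_i^{\pi(u)_i}.
\]
At this point I would invoke the hypothesis that $V(f_k^{(j)}, z)$ is distinct from $V(f_i^{(j)}, z)$ for $i \neq k$, together with the standing assumption that $a_i^{(j)}, b_i^{(j)}$ are not both divisible by $z$: this guarantees that each $g_i$ is a nonzero linear form not vanishing at $p$, so each $U_u$ is a unit (regular and nonvanishing) at $p$.

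Finally, since $(y_0^{(j)})^d$ is a common nonvanishing factor and $x_u = 0$ for $u \notin \tau$, the map $\phi$ near $p$ is given projectively by $[(z')^{\lambda(u)} s^{\pi(u)_k + \lambda(u)} U_u]_{u \in \tau}$. Pulling out the largest common monomial $(z')^a s^c$ with $a = \min_{u \in \tau} \lambda(u)$ and $c = \min_{u \in \tau}(\pi(u)_k + \lambda(u))$ leaves coordinates $\tilde{x}_u = (z')^{\lambda(u) - a}\, s^{(\pi(u)_k + \lambda(u)) - c}\, U_u$, all regular at $p$. A basepoint would occur exactly when every $\tilde{x}_u$ vanishes at $p$, i.e.\ when no single $u$ attains both minima. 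But identifying $\lambda + \pi^*(e_k^*)$ with the affine function $u \mapsto \lambda(u) + \pi(u)_k$ (well-defined up to an irrelevant additive constant), the hypothesized $w$ satisfies $\lambda(w) = a$ and $\lambda(w) + \pi(w)_k = c$, whence $\tilde{x}_w(p) = U_w(p) \neq 0$ and $\phi$ is defined at $p$.

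The only real obstacle is the careful bookkeeping in the middle step: one must verify that the negative powers of $b_k^{(j)} y_0^{(j)}$ cancel precisely because $\sum_i \pi(u)_i = d$ is constant, and that the distinctness and non-divisibility hypotheses are exactly what force the $f_i^{(j)}$ with $i \neq k$ to be units at $p$. Once the coordinates are in the normal form above, the conclusion is immediate, since ``no basepoint'' is literally the condition that some $u$ minimizes both $\lambda$ and $\lambda + \pi^*(e_k^*)$ simultaneously, which is the hypothesis on $w$.
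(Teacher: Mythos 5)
Your proposal is correct and follows essentially the same route as the paper: both arguments reduce to showing that the coordinate indexed by $w$ dominates all others near the point $f_k^{(j)}=z'=0$, using the two minimality hypotheses to control the exponents of $z'$ and of $f_k^{(j)}$ (your $s$), with the remaining factors being units there by the distinctness and non-divisibility assumptions. The paper phrases this by checking that each ratio $x_u/x_w$ is regular at the point, which is equivalent to your step of extracting the common monomial $(z')^a s^c$ and observing that $\tilde x_w$ is a unit.
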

\begin{proof}
	We show that for any $u\in \tau$, $x_u/x_w$ is regular at $f_k^{(j)}=z'=0$. Since $x_w/x_w=1$, this implies there is no basepoint.
	Fix any $u\in \tau$. Using the above description of $x_u$, we have
	\[x_u/x_w=(z')^{\lambda^{(j)}(u)-\lambda^{(j)}(w)}{\left(f_k^{(j)}\right)}^{e_k^*(\pi(u))+\lambda^{(j)}(u)-(e_k^*(\pi(w))+\lambda^{(j)}(w))}\cdot \zeta \]
	where $\zeta$ is regular at $f_k^{(j)}=z'=0$.
	But by assumption,
	\begin{align*}
\lambda^{(j)}(u)-\lambda^{(j)}(w)\geq 0\\
e_k^*(\pi(u))+\lambda^{j}(u)-(e_k^*(\pi(w))+\lambda^{j}(w))\geq 0
\end{align*}
so $x_u/x_w$ is regular.
\end{proof}

We now consider the other chart of the blowup with local coordinates $z$ and $y_0^{(j+1)},y_1^{(j+1)}$ with
\[y_1^{(j)}/y_0^{(j)}+\gamma_{j}=zy_1^{(j+1)}/y_0^{(j+1)}.\]
Let $E_{j+1}$ denote the closure of the line $z=0$ in this chart.
For $u\in \tau$, define
\[
	\lambda^{(j+1)}(u)=
	\lambda^{(j)}(u)+  e_k^*(\pi(u)).
\]
Likewise, we set
\begin{align*}
	a^{(j+1)}_i&=\begin{cases}
		0 & \qquad i=k\\
		a_i^{(j)}-b_i^{(j)}\gamma_{j} & \qquad\textrm{else}
	\end{cases}\\
	b_i^{(j+1)}&=\begin{cases}
		b_i^{(j)} & \qquad i=k\\
		zb_i^{(j)} & \qquad\textrm{else}\\
	\end{cases}
\end{align*}
and
\[
	f_i^{(j+1)}=a_i^{(j+1)}y_0^{(j+1)}+b_i^{(j_1)}y_1^{(j+1)}\qquad \ff[j+1]=(f_0^{(j+1)},\ldots,f_\ell^{(j+1)}).
\]
It is straightforward to check that in these coordinates, $\phi$ is given by
\begin{align*}
	x_u=\begin{cases}
		0& u\notin \tau\\
		z^{\lambda^{(j+1)}(u)}\cdot {\ff[j+1]}^{\pi(u)} & u\in\tau.
	\end{cases}
\end{align*}

\begin{proof}[Proof of Theorem \ref{thm:limit}]
Fix $\ff$ such that the entries have distinct roots. After an automorphism of $\PP^1$, we may assume that none of the $f_i$ have $V(y_0)$ as a root.
Let $a_i^{(0)},b_i^{(0)}\in\KK$ be such that $f_i^{(0)}=f_i$. Consider $v\in N_\tau$; after replacing $v$ by a sufficiently divisible multiple, we may assume that for every cone $\sigma\in \Sigma$ whose relative interior intersects $v+\RR_\geq \pi^*(e_i*)$, there is an integer $j\geq 0$ such that $v+ j\cdot \pi^*(e_i^*)$ is in the relative interior of $\sigma$.
Note that taking a multiple of $v$ does not change the limit cycle.
Let $\lambda^{(0)}:\tau\to \ZZ$ be any affine-linear map whose linear part is exactly $v$.
We claim that for any $j\geq 0$, and for any $0\leq k \leq \ell$, there exists $w\in \tau$ that minimizes both $\lambda+j\cdot e_k^*\circ \pi$ and $\lambda+(j+1)\cdot e_k^*\circ \pi$. Indeed, by our assumption on $v$, the linear parts of these two maps either belong to the same cone of $\Sigma$, or one belongs to a cone which is a face of the other. It follows that one of the corresponding faces of $\tau$ must be a subface of the other, hence such $w$ exists.

Considering the map $\phi$ as above, for $z\in \KK^*$
\[
	v(z) \cdot C_{\pi,\ff}=\phi(\PP^1,z)
\]
by construction.
The restriction of $\phi$ to  $E_0$, the line where $z$ vanishes, is given by $\rho_{\pi|_{\tau^v},\ff}$. The basepoints are given by $V(f_i)$ for those $i$ that are basepoints of $\pi|_{\tau^v}$. If $\pi|_{\tau^v}$ is constant, the line $z=0$ gets contracted to a point by $\phi$. Otherwise, assuming that $\ff$ is sufficiently general, the image of this line is $C_{\pi^v,\ff}$ by construction of $\pi^v$ and Theorem \ref{thm:primitive}. Moreover, by loc. cit. the multiplicity of the pushforward of the line $E_0$ is exactly $m^v$.

It remains to resolve the basepoints of $\phi$ and determine their contributions to the limiting cycle. Fix a basepoint $k$ of $\pi|_{\tau^v}$ and let $\gamma_0=a_k^{(0)}/b_k^{(0)}$. We blow up and use the coordinates discussed above; note that $a_k^{(0)}$ and $b_k^{(0)}$ are constants.

By Lemma \ref{lemma:basepoint} and the discussion above, $\phi$ has no basepoint at $E_0\cap E_1$.
We obtain
\[
	\lambda^{(1)}(u)=
	\lambda^{(0)}(u)+ e_k^*(\pi(u)).
\]
and
\begin{align*}
	a^{(1)}_i&=\begin{cases}
		0 & \qquad i=k\\
		a_i^{(0)}-b_i^{(0)}\gamma_{0} & \qquad\textrm{else}
	\end{cases}\\
	b_i^{(1)}&=\begin{cases}
		b_i^{(0)} & \qquad i=k\\
		zb_i^{(0)} & \qquad\textrm{else}\\
	\end{cases}.\\
\end{align*}

The function $f_k^{(1)}$ then vanishes at $(1:0)$; note that $a_k^{(1)}$ and $b_k^{(1)}$ are again constants. We continue in this manner: taking $\gamma_1=\ldots=\gamma_j=0$ and blowing up $j\geq 0$ more times leads to 
\[
	\lambda^{(j+1)}(u)=
	\lambda^{(0)}(u)+ (j+1)\cdot e_k^*(\pi(u)).
\]
and
\begin{align*}
	a^{(j+1)}_i&=\begin{cases}
		0 & \qquad i=k\\
		a_i^{(0)}-b_i^{(0)}\gamma_{0} & \qquad\textrm{else}
	\end{cases}\\
	b_i^{(j+1)}&=\begin{cases}
		b_i^{(0)} & \qquad i=k\\
		z^{j+1}b_i^{(0)} & \qquad\textrm{else}\\
	\end{cases}.\\
\end{align*}
The face $\tau'$ of $\tau$ on which $\lambda^{(j+1)}$ is minimal is exactly $\tau^{(v+(j+1)\pi^*(e_k^*))}$.
Furthermore, restricting to the line $E_{j+1}$ on this chart, we obtain
\[	(f_i^{(j+1)})|_{E_{j+1}}=\begin{cases}
		b_i^{(0)}y_1 & \qquad i=k\\
		(a_i^{(0)}-b_i^{(0)}\gamma_{0})y_0 & \qquad\textrm{else}\\
	\end{cases}
\]
and the map $\phi|_{E_{j+1}}$ is given by 
\begin{align*}
	x_u=\begin{cases}
		0& u\notin \tau'\\
		{\ff[j+1]}^{\pi(u)}|_{E_{j+1}} & u\in\tau'.
	\end{cases}
\end{align*}
Again by Lemma \ref{lemma:basepoint}, the only possible basepoint of $\phi$ on $E_{j+1}$ is at $(1:0)$. Hence, by continuing this blowup procedure, we eventually resolve the basepoints coming from $f_k$.

The map $\phi|_{E_{j+1}}$ is non-constant if and only if $\tau'$ is a $k$-face. Hence, the map becomes non-constant exactly when choosing $j\geq 0$ such that $v+(j+1)\pi^*(e_k^*)\in\Sigma_k$, and in this case the cone corresponding to $\tau'$ is in $\Sigma_k^v$.

For such $j\geq 0$, set $f_i'=(f_i^{(j+1)})|_{E_{j+1}}$. All the $f_i'$ have the same roots except for $f_k'$, so the map 
\[
	\phi|_{E_{j+1}}=t_k'\cdot \rho_{\kappa_i\circ \pi|_{\tau'}}=t_k'\cdot \rho_{\Res(\kappa_k\circ \pi|_{\tau'})}
\]
where, thinking of $V$ as $\Hom(\KK^*, T_\tau)$,
\[
t_k' = \pi^*(e_k^*)(b_k^{(0)}) \cdot \prod_{i \ne k} \pi^*(e_i^*)(a_i^{(0)} - b_i^{(0)}\gamma_0).
\]
Since \[
	f_i\wedge f_k=b_k^{(0)}(a_i^{(0)}-b_i^{(0)}/\gamma_0)
\]
we may act on $\PP^1$ by replacing $y_0^{(j+1)}$ with $b_k^{(0)}y_0^{(j+1)}$ and $y_1^{(j+1)}$ with $y_1^{(j+1)}/b_k^{(0)}$ to see that after rescaling coordinates, 
we may replace $t_k'$ with $t_k$ as defined prior to the statement of the theorem.
Finally, by Theorem \ref{thm:primitive}, the pushforward of $E_{j+1}$ under $\rho_{\Res(\kappa_k\circ \pi|_{\tau'})}$ is exactly $m_k^{\tau'} \cdot C_{\pi_k^{\tau'}}$.

Resolving each basepoint of $\phi$ in this manner, we obtain exactly the formula from the statement of the theorem. We note that in order to apply Theorem \ref{thm:primitive} for the Cayley structure $\pi^v$, we need to assume that $\ff$ is general. However, to apply it for the Cayley structures obtained after blowing up, we need no such assumption since these are Cayley structures of length one (whose input forms are thus automatically general).
\end{proof}

\subsection{Orbits in the Hilbert Scheme}
To probe the boundary of the Hilbert scheme $\Hilb_{dm+1}(X_\A)$ it is also of interest to ask for a description of the orbit closure of $[C_{\pi,\ff}]$ there. This seems challenging in general, but we do know the following:
\begin{prop}\label{prop:hilb}
	Let $\pi$ be a primitive smooth Cayley structure of degree $d$ and assume that $\ff$ is general. Then the fan describing the normalization of the orbit closure of $[C_{\pi,\ff}]$ in $\Hilb_{dm+1}(X_\A)$ is a refinement of the fan from Theorem \ref{thm:fan}.
\end{prop}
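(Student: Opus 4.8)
The plan is to deduce the statement from the Hilbert--Chow morphism
\[
\mathrm{HC}\colon \Hilb_{dm+1}(X_\A)\to \Chow_d(X_\A),
\]
which sends a one-dimensional subscheme to its associated one-cycle. This morphism is proper and $T$-equivariant, the $T$-actions on both moduli spaces being induced functorially from the action on $X_\A$. Since $\pi$ is smooth, hence primitive, and $\ff$ is general, Theorems \ref{thm:primitive} and \ref{thm:smooth} guarantee that $C_{\pi,\ff}$ is a reduced, irreducible, smooth curve of degree $d$; thus $\mathrm{HC}$ sends the Hilbert point $[C_{\pi,\ff}]$ to the cycle $\{C_{\pi,\ff}\}$.

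First I would observe that $\mathrm{HC}$ is an isomorphism on the dense orbit. Because $C_{\pi,\ff}$ is reduced and irreducible, a torus element $t\in T_\tau$ satisfies $t\cdot C_{\pi,\ff}=C_{\pi,\ff}$ as subschemes if and only if $t\cdot\{C_{\pi,\ff}\}=\{C_{\pi,\ff}\}$ as cycles; in both cases the stabilizer is exactly the group $\Stab_{\pi,\ff}$ of Proposition \ref{prop:stabilizer-ell-ge-1}. Hence $\mathrm{HC}$ restricts to an isomorphism of the orbits $\overline T_{\pi,\ff}\cdot[C_{\pi,\ff}]\xrightarrow{\sim}\overline T_{\pi,\ff}\cdot\{C_{\pi,\ff}\}$, and in particular induces the identity on the common cocharacter lattice $N_{\pi,\ff}$.

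Next I would set up the toric picture. As $\Hilb_{dm+1}(X_\A)$ and $\Chow_d(X_\A)$ are projective, the orbit closures $\overline{T\cdot[C_{\pi,\ff}]}$ and $\overline{T\cdot\{C_{\pi,\ff}\}}$ are complete; each is the closure of a single orbit of the torus $\overline T_{\pi,\ff}$, so by Sumihiro's theorem their normalizations are complete toric varieties with dense torus $\overline T_{\pi,\ff}$ and cocharacter lattice $N_{\pi,\ff}$. Write $\Sigma_{\mathrm{Hilb}}$ and $\Sigma_{\mathrm{Chow}}$ for the corresponding complete fans in $N_{\pi,\ff}\otimes\RR$; by Theorem \ref{thm:fan}, $\Sigma_{\mathrm{Chow}}$ is the image of $\Sigma_\pi$. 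Since the image of an orbit closure under a proper equivariant map is again an orbit closure, $\mathrm{HC}$ maps $\overline{T\cdot[C_{\pi,\ff}]}$ onto $\overline{T\cdot\{C_{\pi,\ff}\}}$, and, passing to normalizations, induces a toric morphism $X_{\Sigma_{\mathrm{Hilb}}}\to X_{\Sigma_{\mathrm{Chow}}}$ whose associated lattice map is the identity on $N_{\pi,\ff}$. By the compatibility criterion for toric morphisms \cite[\S3.3]{cls}, existence of such a morphism forces every cone of $\Sigma_{\mathrm{Hilb}}$ to be contained in a cone of $\Sigma_{\mathrm{Chow}}$; as the two fans are complete they have the same support, so this is precisely the statement that $\Sigma_{\mathrm{Hilb}}$ refines $\Sigma_\pi$.

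The main obstacle I anticipate is the justification that $\mathrm{HC}$ induces the \emph{identity} on lattices rather than a nontrivial isogeny, i.e.\ the coincidence of the two stabilizers; this is the crux and relies on the reducedness and irreducibility of $C_{\pi,\ff}$ coming from smoothness of $\pi$. A secondary technical point is to confirm that the normalizations of the two (a priori non-normal) orbit closures are toric and carry compatible torus actions, so that $\mathrm{HC}$ genuinely descends to a toric morphism; I would handle this via Sumihiro's theorem together with the universal property of normalization.
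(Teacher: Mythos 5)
Your proposal is correct and follows essentially the same route as the paper: both use the $T$-equivariant Hilbert--Chow morphism, note that it restricts to an isomorphism on the dense orbit (so the induced map $Z'\to Z$ of normalized orbit closures is a birational equivariant morphism of complete toric varieties over the same lattice $N_{\pi,\ff}$), and then invoke the standard fact \cite[\S3.3]{cls} that such a morphism forces the source fan to refine the target fan. Your write-up merely spells out in more detail the points the paper leaves implicit (properness, coincidence of stabilizers via reducedness and irreducibility, and Sumihiro's theorem), all of which are correct.
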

\begin{proof}
Let $Z'$ be the normalization of $\overline {T\cdot [C_{\pi,\ff}]}$ and $Z$ the normalization of $\overline {T\cdot \{C_{\pi,\ff}\}}$. Here, closures are being taken respectively in the Hilbert scheme and Chow variety.
	There is a natural $T$-equivariant morphism $\Hilb_{dm+1}(X_\A) \to \Chow_d(X_\A)$ taking a scheme to its underlying cycle; this map induces an isomorphism from the $T$-orbit of $[C_{\pi,\ff}]$ to the $T$-orbit of $\{C_{\pi,\ff}\}$ and hence a birational torus equivariant morphism $Z'\to Z$.
	
	This means that the fan for  $Z'$ is a subdivision of the fan for $Z$, see e.g.~\cite[\S3.3]{cls}. The fan for $Z$ is described in Theorem \ref{thm:fan}.
\end{proof}

In the case of conics, that is, degree two Cayley structures, we can give a more explicit answer.
It is well-known that any subscheme of $\PP^n$ with Hilbert polynomial $2m+1$ is a plane conic. For the sake of the reader, we provide a short proof of this fact:
\begin{lemma}\label{lemma:conic2}
	Any subscheme of $\PP^n$ with Hilbert polynomial $2m+1$ is contained in a plane.
\end{lemma}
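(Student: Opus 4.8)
The plan is to reduce this geometric statement to a single cohomological inequality, $h^0(\mathcal O_C(1)) \le 3$, and then prove that inequality. First I would record what the Hilbert polynomial gives: writing $P(m) = 2m+1 = \chi(\mathcal O_C(m))$, the subscheme $C \subseteq \PP^n$ is one-dimensional of degree $2$, and $\chi(\mathcal O_C) = P(0) = 1$. Since $\dim C = 1$ we have $H^{\ge 2}(\mathcal O_C(m)) = 0$, so $h^0(\mathcal O_C(m)) = P(m) + h^1(\mathcal O_C(m))$ for every $m$; in particular $h^0(\mathcal O_C(1)) = 3 + h^1(\mathcal O_C(1))$.

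Next I would make the reduction precise. Let $S = \KK[x_0,\dots,x_n]$ with saturated ideal $I_C$. Then $\dim_\KK (S/I_C)_1 = 1 + \dim \langle C \rangle$, and $C$ is contained in a plane exactly when this quantity is at most $3$ (equivalently, when $\dim (I_C)_1 \ge n-2$). Since $(S/I_C)_1$ is the image of the restriction map $H^0(\mathcal O_{\PP^n}(1)) \to H^0(\mathcal O_C(1))$, we get $\dim_\KK(S/I_C)_1 \le h^0(\mathcal O_C(1))$. Thus it suffices to prove $h^0(\mathcal O_C(1)) \le 3$, i.e.\ $h^1(\mathcal O_C(1)) = 0$.

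To kill $h^1(\mathcal O_C(1))$ I would cut by a general hyperplane $H = V(\ell)$. For general $\ell$ the intersection $\Gamma = C \cap H$ is zero-dimensional of length $2$, and $\ell$ is a nonzerodivisor on $\mathcal O_C$ (a general $H$ avoids the finitely many zero-dimensional associated points and is transverse along the curve), giving a short exact sequence $0 \to \mathcal O_C \xrightarrow{\,\ell\,} \mathcal O_C(1) \to \mathcal O_\Gamma(1) \to 0$. As $\Gamma$ is zero-dimensional, $H^1(\mathcal O_\Gamma(1)) = 0$, so the long exact sequence exhibits $H^1(\mathcal O_C(1))$ as a quotient of $H^1(\mathcal O_C)$. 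Hence it is enough to show $h^1(\mathcal O_C) = 0$, equivalently — using $\chi(\mathcal O_C) = 1$ — that $h^0(\mathcal O_C) = 1$.

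The main obstacle is precisely this last point, controlling $h^0(\mathcal O_C)$, and here I would invoke the classification of degree-$2$ curves. First, $C$ is connected: a disconnected degree-$2$ curve is a disjoint union of two lines, which has $\chi(\mathcal O_C) = 2 \ne 1$. If $C$ is reduced, it is then a smooth conic or a pair of lines meeting at a point (two disjoint lines being excluded by connectedness, and an irreducible nondegenerate curve of degree $2$ forcing $n \le 2$ by the minimal-degree bound $\deg \ge \codim + 1$); in each case $C$ is planar with $h^0(\mathcal O_C) = 1$. If $C$ is nonreduced, then $C_{\mathrm{red}}$ is a single line $L$ and $C$ is a double line; from the structure sequence $0 \to \mathcal L \to \mathcal O_C \to \mathcal O_L \to 0$ with $\mathcal L$ a line bundle on $L \cong \PP^1$, one checks that very ampleness of the degree-$2$ bundle $\mathcal O_C(1)$ forces $\deg \mathcal L = -1$, so $h^0(\mathcal L) = 0$ and $h^0(\mathcal O_C) = 1$. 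In every case $h^0(\mathcal O_C) = 1$, which closes the chain of reductions and proves the lemma. I expect the delicate part to be the genus bookkeeping for the nonreduced case (and the cleanest phrasing may be to run the whole argument through $\chi$ and the hyperplane section, invoking the classification only to pin down $h^0(\mathcal O_C) = 1$).
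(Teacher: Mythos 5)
Your argument is essentially sound but takes a completely different route from the paper. The paper does not analyze an arbitrary subscheme at all: it observes that a plane conic $Y\subset\PP^n$ is a complete intersection (one quadric and $n-2$ linear forms), applies the Piene--Schlessinger comparison theorem to conclude that $\Hilb_{2m+1}(\PP^n)$ is smooth at $[Y]$, and then invokes Hartshorne's connectedness of the Hilbert scheme to force $\Hilb_{2m+1}(\PP^n)$ to be irreducible and to consist only of plane conics. That argument is short and global but leans on two nontrivial black boxes; yours is a direct, pointwise cohomological argument ($h^0(\mathcal O_C(1))\le 3$ via a general hyperplane section and $h^0(\mathcal O_C)=1$), which is more elementary and more self-contained, at the cost of having to classify degree-$2$ one-dimensional schemes by hand.

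Two points in your classification step need repair. First, the justification ``very ampleness of the degree-$2$ bundle $\mathcal O_C(1)$ forces $\deg\mathcal L=-1$'' is not correct: double lines with $\mathcal L\cong\mathcal O_L(a)$ embed in projective space for every $a\ge -1$ (e.g.\ $a=0$ gives the genus $-1$ double line in $\PP^3$ that is the flat limit of two skew lines), so very ampleness alone does not pin down $a$. What does pin it down is the Euler characteristic you already computed: $\chi(\mathcal O_C)=\chi(\mathcal L)+\chi(\mathcal O_L)=2+\deg\mathcal L=1$ forces $\deg\mathcal L=-1$. Second, your case analysis implicitly assumes $C$ is Cohen--Macaulay with no zero-dimensional components: ``a disconnected degree-$2$ curve is a disjoint union of two lines'' and ``$C$ is a double line'' both fail for schemes with embedded or isolated points, which are not a priori excluded by the Hilbert polynomial. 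The fix is the same observation run in reverse: every Cohen--Macaulay curve of degree $\le 2$ has $\chi(\mathcal O)\ge 1$ (lines and planar configurations have $\chi\in\{1,2\}$, and double lines have $\chi=2+\deg\mathcal L\ge 1$), and stripping embedded or isolated points from $C$ only decreases $\chi$ by their length; hence $\chi(\mathcal O_C)=1$ already forces $C$ to be Cohen--Macaulay and connected before you begin the case analysis. With those two adjustments the proof closes; note also that once the classification is done, every case is visibly planar, so the hyperplane-section reduction, while correct, is doing less work than it appears to.
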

\begin{proof}
	Let $Y\subset \PP^n$ be any plane conic. Then $Y$ is a complete intersection of a conic with $n-2$ linear forms. It is straightforward to check that the Piene-Schlessinger comparison theorem applies to $Y$ \cite{piene}. Since $Y$ is a complete intersection, it follows that $\Hilb_{2m+1}(\PP^n)$ is smooth at $[Y]$. By connectedness of the Hilbert scheme \cite{connectedness}, we conclude that $\Hilb_{2m+1}(\PP^n)$ is irreducible and only consists of plane conics.
\end{proof}

\begin{defn}Let $\pi:\tau\to \Delta_\ell(2)$ be a degree two primitive Cayley structure.
We define $\Sigma_\pi'$ to be the normal fan in $V=\Hom(\langle \tau\rangle ,\RR)$ of the convex hull of
\[\{u+v+w\in M\ |\ u,v,w\in \tau,\ \pi(u),\pi(v),\pi(w)\ \textrm{distinct}\}.
\]
\end{defn}

\begin{thm}\label{thm:conics}
	Let $\pi$ be a primitive Cayley structure of degree two and $\ff$ sufficiently general. Let $Z'$ be the normalization of 
	\[
		\overline{T\cdot [C_{\pi,\ff}]}\subseteq \Hilb_{2m+1}(X_\A).
	\]
	Then the fan describing $Z'$ is the image in $N_{\pi,\ff}\otimes \RR$ of the coarsest common refinement of $\Sigma_\pi$ and $\Sigma'_\pi$.
\end{thm}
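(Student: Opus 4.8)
The plan is to realize $Z'$ as the normalization of a single torus orbit closure inside the product $\Gr(3,\#\tau)\times \Chow_d(X_\A)$, where the first factor records the plane spanned by a conic. By Lemma \ref{lemma:conic2}, every point of $\Hilb_{2m+1}(X_\A)$ is a plane conic, so sending a conic to its linear span defines a $T$-equivariant morphism $\mathrm{sp}:\Hilb_{2m+1}(X_\A)\to \Gr(3,\#\tau)$ (since $C_{\pi,\ff}\subseteq X_\tau$, the span lies in the coordinate subspace indexed by $\tau$). Combined with the Hilbert--Chow morphism $\mathrm{hc}$, this gives $\Psi=(\mathrm{sp},\mathrm{hc})$. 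The first key step is to show $\Psi$ is injective on conics: a plane conic is recovered from its cycle together with its plane $P$. Writing such a conic as $V(Q)\subseteq P$ for a quadratic form $Q$ on $P$, the cycle is a smooth conic, a sum $L_1+L_2$ of distinct lines, or a double line $2L$, and in each case $Q$ is determined up to scalar by the cycle inside $P$ (it is $\ell^2$ in the double-line case). Since the $T$-orbit closure of $[C_{\pi,\ff}]$ consists entirely of conics, $\Psi$ is injective there and restricts to an isomorphism on the dense orbit; as $Z'$ is normal, it follows that $Z'$ is the normalization of the orbit closure $\overline{T\cdot \Psi([C_{\pi,\ff}])}$ in the product.

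Next I would identify the fan of the normalization $Z''$ of the $\mathrm{sp}$-orbit closure. Expanding $\ff^{\pi(u)}=\sum_{k=0}^{2} c_{u,k}\,y_0^{2-k}y_1^k$, the conic $C_{\pi,\ff}$ is the image of the rational normal conic $\PP^1\to\PP^2$ under the linear map $\KK^3\to\KK^{\#\tau}$ with matrix $(c_{u,k})$, so $\mathrm{sp}(C_{\pi,\ff})\in\Gr(3,\#\tau)$ is its column space, and the Pl\"ucker coordinate $p_{uvw}$ is the determinant of the three quadrics $\ff^{\pi(u)},\ff^{\pi(v)},\ff^{\pi(w)}$ in the monomial basis. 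Under $T_\tau$ the coordinate $p_{uvw}$ has weight $u+v+w\in M$, and for general $\ff$ this determinant is nonzero exactly when $\pi(u),\pi(v),\pi(w)$ are distinct. Hence the weight polytope of the $\mathrm{sp}$-orbit closure is $\conv\{u+v+w:\pi(u),\pi(v),\pi(w)\ \textrm{distinct}\}$, whose inner normal fan is $\Sigma'_\pi$; thus $Z''$ is the toric variety associated with the image of $\Sigma'_\pi$ in $N_{\pi,\ff}\otimes\RR$.

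Finally I would combine the two factors. The torus acting faithfully on the orbit closure has cocharacter lattice $N_{\pi,\ff}$, and for a general one-parameter subgroup $v$ the limit of $\Psi([C_{\pi,\ff}])$ is the pair $\bigl(\lim_v \mathrm{sp}(C_{\pi,\ff}),\ \lim_v\{C_{\pi,\ff}\}\bigr)$. Two such $v,v'$ give the same limit in the product if and only if they give the same limit in each factor, i.e.\ lie in a common cone of $\Sigma'_\pi$ and of $\Sigma_\pi$ simultaneously, equivalently in a common cone of their coarsest common refinement. Since a complete fan is determined by its full-dimensional cones (cf.\ \cite[Proposition 3.2.2]{cls}), the normalization of the orbit closure in the product, namely $Z'$, is the toric variety whose fan is the image in $N_{\pi,\ff}\otimes\RR$ of the coarsest common refinement of $\Sigma_\pi$ (from Theorem \ref{thm:fan}) and $\Sigma'_\pi$, as claimed.

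The main obstacle is the injectivity of $\Psi$, and specifically the double-line case: one must verify that a subscheme with Hilbert polynomial $2m+1$ whose cycle is $2L$ and whose span is a fixed plane $P$ is exactly the double line $V(\ell^2)\subseteq P$, carrying no extra embedded structure, which is precisely where Lemma \ref{lemma:conic2} is essential. A secondary technical point is confirming that the Pl\"ucker nonvanishing locus matches the combinatorial distinctness condition for sufficiently general $\ff$, and carefully tracking the passage from $N_\tau$ to the quotient lattice $N_{\pi,\ff}$ so that both $\Sigma_\pi$ and $\Sigma'_\pi$ are compared in the correct space.
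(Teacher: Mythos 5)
Your proposal follows essentially the same route as the paper's proof: both use Lemma \ref{lemma:conic2} to reduce a plane conic to the pair (plane, underlying cycle), identify $Z'$ with the normalization of the $T$-orbit closure of $(\Lambda,\{C_{\pi,\ff}\})$ in $\Gr(3,\#\A)\times\Chow_2(X_\A)$, compute the Grassmannian factor as the toric variety of the normal fan of $\conv\{u+v+w:\pi(u),\pi(v),\pi(w)\ \text{distinct}\}$, and conclude by taking the common refinement with the fan of Theorem \ref{thm:fan}. The one step you defer as a ``secondary technical point'' --- that for general $\ff$ the Pl\"ucker coordinate $p_{uvw}$, i.e.\ the determinant of $\ff^{\pi(u)},\ff^{\pi(v)},\ff^{\pi(w)}$ in $\KK[y_0,y_1]_2$, vanishes exactly when two of $\pi(u),\pi(v),\pi(w)$ coincide --- is in fact where all the work lies (the paper argues it via the second Veronese and the matroid of the plane $\Lambda'$), and it is more delicate than your write-up suggests. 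If the three image points all have $i$-th coordinate at least $1$ for a common index $i$, then the three binary quadrics share the linear factor $f_i$ and lie in the two-dimensional space $f_i\cdot\KK[y_0,y_1]_1$, hence are dependent and $p_{uvw}=0$ even though the fibers are distinct; such triples can occur in the image of a smooth primitive degree-two Cayley structure (e.g.\ $\B=\{e_0+e_1,e_0+e_2,e_0+e_3,2e_1\}$ on a unimodular $3$-simplex). So either such configurations must be excluded, or the weight polytope of the Grassmannian orbit must be taken to be the honest matroid polytope, which omits those vertices. This is a gap your argument shares with the paper's own proof rather than one you introduce, but the deferred verification cannot be waved through as stated.
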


\begin{proof}
	Let $\Lambda$ be the unique plane containing $C_{\pi,\ff}$. We will see that $\Sigma_\pi'$ is the fan corresponding to the closure in $\Gr(3,\#\A)$ of the torus orbit of $\Lambda$ under the action by $T$. Since any (possibly degenerate) conic is determined by the underlying cycle and the plane containing it (cf. Lemma \ref{lemma:conic2}), the claim of the theorem follows from Theorem \ref{thm:fan}.

	Let $\M(\Lambda)$ be the matroid of $\Lambda \subseteq \PP^{\#\A-1}$. The elements of the ground set are labeled by $u\in \A$, and a collection of elements $S\subset \A$ is a basis of $\M(\Lambda)$ if the projection of $\Lambda$ to the projective space with coordinates indexed by $S$ is bijective.
	It follows that any basis has exactly three elements, and by construction of $C_{\pi,\ff}$, these elements must lie in different fibers of $\pi$.

	In fact, this condition also suffices for three elements to form a basis. Indeed, consider the composition of the embedding $\PP^1\to \PP^{\ell}$ determined by $\ff$ with the second Veronese map $\PP^{\ell}\to \PP^{{{\ell+1}\choose 2}-1}$. The image of this map is a conic $C$; let $\Lambda'$ denote the plane containing it. Since $\ff$ is general, the matroid of $\Lambda'$ is the uniform rank 3 matroid on ${\ell+1}\choose 2$ elements. By construction of $C_{\pi,\ff}$, $u,v,w\in \A$ form a basis for $\M(\Lambda)$ if and only if $\pi(u),\pi(v),\pi(w)$ form a basis for $\M(\Lambda')$. Since the latter matroid is uniform, we see that $u,v,w$ form a basis if and only if they lie in different fibers of $\pi$.

	The \emph{matroid polytope} for $\M(\Lambda)$ is thus
	\[P=\conv\{e_u+e_v+e_w\in \ZZ^{\#\A}\ |\ u,v,w\in \tau,\ \pi(u),\pi(v),\pi(w)\ \textrm{distinct}\},
\]
see \cite{matroid}. The closure of the orbit of $\Lambda$ under the big torus ${(\KK^*)}^{\#\A}$ is the toric variety associated to the polytope $P$. Since we are interested instead in the $T$-orbit, we consider the projection of $P$ to $M$ under the map $e_u\mapsto u$. The resulting polytope describes the $T$-orbit closure of $\Lambda$, and its normal fan $\Sigma_\pi'$ the normalization thereof.
\end{proof}

\begin{ex}[General conics in $\PP^3$]\label{ex:P3}
	Let $\A=\Delta_{3}(1)$. The toric variety $X_\A$ is simply $\PP^3$.
	We consider the length seven Cayley structure $\pi:\A\to \Delta_7(2)$ sending $e_i$ to $e_i+e_{i+4}$ for $i=0,1,2,3$. This Cayley structure gives the most general conics in $\PP^3$. We will compare torus orbit closures in the Chow variety and Hilbert scheme.

	The normal fan of $\A$ in $N_\A$ has rays generated by the images of $e_0^*,\ldots,e_3^*$ in $N_\A=(\ZZ^4)^*/\langle e_0^*+\ldots +e_3^*\rangle$.
	The maximal cones of the normal fan are generated by any three of these rays. 

	One may calculate that the fan $\Sigma_\pi$ has rays generated by $\pm e_i^*$, $i=0,\ldots,3$. The six maximal cones are generated by collections of rays of the form $e_i^*,e_j^*,-e_k^*,-e_\ell^*$ for $\{i,j,k,\ell\}=\{0,1,2,3\}$. See Figure \ref{fig:P3} for a schematic representation of this fan: two ray generators are joined by a dashed black line segment if and only if they generate a face of a cone in $\Sigma_\pi$.
	This fan describes the normalization $Z$ of the orbit closure in $\Chow_2(\PP^3)$ by Theorem \ref{thm:fan}. This toric variety has six isolated singularities, each of which is a cone over the quadric surface.

	On the other hand, the fan $\Sigma_\pi'$ has rays generated by $-e_i^*$, $i=0,\ldots, 3$. The four maximal cones are generated by any set of three rays. In the schematic representation of Figure \ref{fig:P3}, two ray generators are joined by a gray line segment if and only if they generate a face of a cone in $\Sigma_\pi'$.
	The coarsest common refinement of $\Sigma_\pi$ and $\Sigma_\pi'$ thus has twelve maximal cones, generated by $e_i^*,-e_j^*,-e_k^*$ for $i,j,k$ distinct. Each of the six cones of $\Sigma_\pi$ is subdivided into two maximal cones, see Figure \ref{fig:P3}, taking both the dashed and gray line segments into account.
	By Theorem \ref{thm:conics} this fan describes the normalization  $Z'$ of the orbit closure in the Hilbert scheme.

	Geometrically, in this example the map $Z'\to Z$ is a crepant resolution of the singularities of $Z$.
\end{ex}

\begin{figure}
	\begin{tikzpicture}[scale=1.5]
		\draw[lightgray] (0,0) -- (0,2) -- (-1.73,1) -- (0,0);
		\draw[lightgray] (0,0) -- (0,2) -- (1.73,1) -- (0,0);
\draw[dashed] (0,0) -- (.58,1) -- (0,2); 
\draw[dashed] (0,0) -- (-.58,1) -- (0,2); 
\draw[dashed] (-1.73,1) -- (-.58,1); 
\draw[dashed] (1.73,1) -- (.58,1); 
\draw[fill] (0,0) circle [radius=0.04] node[below] {$-e_k^*$};
\draw[fill] (0,2) circle [radius=0.04] node[above] {$-e_i^*$};
\draw[fill] (1.73,1) circle [radius=0.04] node[right] {$-e_\ell^*$};
\draw[fill] (-1.73,1) circle [radius=0.04] node[left] {$-e_j^*$};
\draw[fill] (.58,1) circle [radius=0.04] node[below right] {$e_j^*$};
\draw[fill] (-.58,1) circle [radius=0.04] node[below left] {$e_\ell^*$};
\end{tikzpicture}
	\caption{Schematic representation of fans from Example \ref{ex:P3}. Black dashed lines are $2$-dimensional cones in $\Sigma_\pi$; gray lines are $2$-dimensional cones in $\Sigma'_\pi$.}\label{fig:P3}
\end{figure}

\begin{ex}\label{ex:fanohilb2}
	We continue our analysis of Example \ref{ex:cayley} by applying Theorem \ref{thm:conics}. For the Cayley structure $\pi$, we had already noted in Example \ref{ex:fanochow} that the fan $\Sigma_{\pi}$ describes the orbit closure in the Hilbert scheme. We can now see this in another way: the polytope used to construct the fan  $\Sigma_{\pi'}$ is depicted on the left of Figure \ref{fig:fanohilb2}. Its normal fan is exactly the quasifan $\Sigma_{\pi}$, so by Theorem \ref{thm:conics} we recover that the orbit closures in the Hilbert scheme and in the Chow variety coincide.

	We may instead consider the Cayley structure $\pi'$. The polytope used to construct the fan $\Sigma_{\pi'}'$ is depicted on the right of Figure \ref{fig:fanohilb2}. Its normal fan is a coarsening of the quasifan $\Sigma_{\pi'}$, so again by Theorem \ref{thm:conics} we recover that the orbit closures in the Hilbert scheme and in the Chow variety coincide.
	A similar analysis shows that the same statement holds for any of the length one Cayley structures considered in Example \ref{ex:fano}. Since the orbit closures have dimension two, and the corresponding Hilbert scheme components also have dimension two, we obtain that the normalizations of the Hilbert scheme components are the toric surfaces corresponding to the fans of Figure \ref{fig:fans}.
\end{ex}
\begin{figure}\tiny	
	\begin{tikzpicture}[scale=1.2]
		\draw[fill,lightgray] (-2,0) -- (-1,-1) -- (1,-1) -- (2,0) -- (1,1) -- (-1,1) -- (-2,0);
		\draw (-2,0) -- (-1,-1) -- (1,-1) -- (2,0) -- (1,1) -- (-1,1) -- (-2,0);
\draw[fill] (2,0) circle [radius=0.04] node[left] {$(2,0,0)$};
\draw[fill] (-2,0) circle [radius=0.04] node[right] {$(-2,0,0)$};
\draw[fill] (-1,-1) circle [radius=0.04] node[below] {$(-1,0,-1)$};
\draw[fill] (-1,1) circle [radius=0.04] node[above] {$(-1,0,1)$};
\draw[fill] (1,-1) circle [radius=0.04] node[below] {$(1,0,-1)$};
\draw[fill] (1,1) circle [radius=0.04] node[above] {$(1,0,1)$};
	\end{tikzpicture}
	\begin{tikzpicture}[scale=1.2]
		\draw[draw=none] (-3,0) -- (0,0);
		\draw[fill,lightgray] (-2,0) -- (-1,-1) -- (2,-1) -- (2,0) -- (1,1) -- (-2,1) -- (-2,0);
\draw (-2,0) -- (-1,-1) -- (2,-1) -- (2,0) -- (1,1) -- (-2,1) -- (-2,0);
\draw[fill] (2,0) circle [radius=0.04] node[left] {$(2,0,0)$};
\draw[fill] (-2,0) circle [radius=0.04] node[right] {$(-2,0,0)$};
\draw[fill] (-1,-1) circle [radius=0.04] node[below] {$(-1,1,-1)$};
\draw[fill] (-2,1) circle [radius=0.04] node[above] {$(-1,-1,1)$};
\draw[fill] (2,-1) circle [radius=0.04] node[below] {$(1,1,-1)$};
\draw[fill] (1,1) circle [radius=0.04] node[above] {$(1,-1,1)$};
	\end{tikzpicture}
	\caption{Matroid polytopes for $\pi$ and $\pi'$ (Example \ref{ex:fanohilb2})}\label{fig:fanohilb2}
\end{figure}
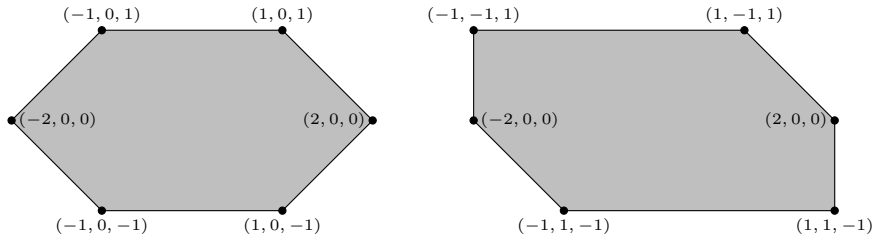
\bibliographystyle{amsalpha}
    \bibliography{paper}
    \end{document}